\documentclass[12pt,reqno]{amsart}
\usepackage{amsmath,amssymb,amsthm,mathtools,wasysym,calc,verbatim,enumitem,tikz,pgfplots,url,mathrsfs,fullpage}

\usepackage{comment}

\newtheorem{definition}{Definition}[section]
\newtheorem{remark}{Remark}[section]
\newtheorem{theorem}{Theorem}[section]
\newtheorem{conjecture}{Conjecture}[section]
\newtheorem{corollary}{Corollary}[section]
\newtheorem{lemma}{Lemma}[section]
\newtheorem{proposition}{Proposition}[section]

\theoremstyle{definition}

\theoremstyle{remark}

\usepackage[colorlinks = true,
            linkcolor  = blue,
            citecolor  = blue,
            urlcolor   = blue,
            bookmarks  = true]{hyperref}

\theoremstyle{remark}

\numberwithin{equation}{section}

\usepackage[backend=bibtex, style=numeric, citestyle=numeric-comp]{biblatex}
\DeclareFieldFormat{labelnumberwidth}{\mkbibbrackets{#1}}
\DeclareNameAlias{author}{family-given}

\DeclareFieldFormat*{title}{#1}

\AtEveryBibitem{%
  \clearfield{issn}
  \clearfield{url}
  \clearfield{eprint}
}

\renewbibmacro{in:}{}

\DeclareFieldFormat[article]{volume}{\textbf{#1}}
\DeclareFieldFormat[article]{number}{\textbf{#1}}
\renewbibmacro*{volume+number+eid}{%
  \printfield{volume}
  \iffieldundef{number}
    {}%
    {\printtext[parens]{\printfield{number}}}
  \setunit{\addcomma\space}%
  \printfield{eid}}

\DeclareFieldFormat[journal]{journaltitle}{\mkbibemph{#1}}

\DeclareNameAlias{author}{family-given}
\DeclareNameAlias{editor}{family-given}

\DeclareDelimFormat{namedelim}{\addcomma\space}
\DeclareFieldFormat[article,incollection,inproceedings,book]{giveninit}{\mkbibnamegiven{#1}}

\DeclareNameFormat{family-given}{%
  \usebibmacro{name:family-given}
    {\namepartfamily}
    {\namepartgiveni} 
    {\namepartprefix}
    {\namepartsuffix}%
  \usebibmacro{name:andothers}}

\DeclareFieldFormat{pages}{#1}

 \allowdisplaybreaks[4]

\begin{document}

\title{Chaining, tree and measure for some canonical processes}
\author{Xuanang Hu}
\address{Shandong University,  Jinan, China.}
\email{xuananghu7@gmail.com}

\author{Hanchao Wang}
\address{Shandong University, Jinan,  China.}
\email{wanghanchao@sdu.edu.cn}

\author{Xinglong Wu}
\address{Shandong University, Jinan,  China.}
\email{wuxinglong3@gmail.com}

\subjclass[2020]{60B11, 46B20.}

\begin{abstract}
We prove two deterministic results for families of distances arising in the
study of canonical processes. The first derives an admissible partition
scheme from a growth condition. The second gives a representation in terms
of parameterized separation trees and compares it with the corresponding
majorizing-measure quantities. The main point is that the proofs do not
depend on the distribution of the underlying process: once the initial
distance and the family of distances are given, no random variables,
independence, tail functions, or moment estimates are used.

For canonical processes with regular log-concave tails, the assumptions of
the abstract results follow from the usual regularity conditions. One
direction of the separation-tree estimate also applies to Bernoulli
processes without these additional assumptions, and we prove the reverse
estimate for bounded convex unconditional index sets. We also give a version of the growth argument for points which, for finite
index sets, leads to a recursive construction of admissible partitions and
parameterized separation trees.
\end{abstract}

\keywords{Canonical processes; growth conditions; partition schemes; majorizing measures; separation trees.}
\maketitle

\section{Introduction and main results}
\label{Introduction}

Let $(X_i)_{i\ge1}$ be independent centered random variables. For
$t=(t_i)_{i\ge1}$ such that the series converges in $L^1$, set
\[
X_t:=\sum_{i\ge1}t_iX_i.
\]
For an index set $T$ consisting of such points, we consider the canonical
process $(X_t)_{t\in T}$. To avoid measurability questions, set
\[
\textsf{E}\sup_{t\in T}X_t
:=
\sup\left\{
\textsf{E}\sup_{t\in F}X_t:
F\subset T,\ F\text{ finite}
\right\}.
\]

We are interested in estimates of this quantity in terms of the geometry of
the index set. For Gaussian processes this is the content of the
majorizing-measure theorem. Talagrand proved the converse majorizing-measure
estimate and obtained a characterization of the expected supremum in terms
of the canonical metric; see \cite{tala1,talab}. He later considered
canonical processes generated by independent variables with densities
proportional to
\[
\exp(-|x|^\alpha),
\qquad 1\le\alpha<\infty,
\]
and introduced families of distances adapted to the different scales of the
process; see \cite{tala2}.

Lata{\l}a obtained a Sudakov minoration and a majorizing-measure
characterization for canonical processes with independent log-concave-tailed
coordinates under an upper regularity assumption on the tails
\cite{lata1}. Lata{\l}a and Tkocz later gave a characterization by chains of
increment moments for variables with regular moments \cite{lata2}. We refer
to \cite{talab} for the corresponding partition and growth constructions.

For Bernoulli processes, Talagrand conjectured that the expected supremum is
described by a decomposition into an $\ell_1$ part and a Gaussian chaining
part. This was proved by Bednorz and Lata{\l}a \cite{bl}. Related partition
and decomposition arguments also occur for infinitely divisible processes
\cite{b1}, while tree constructions for stochastic suprema appear, for
example, in \cite{gztree}.

A representation in terms of parameterized separation trees for canonical
processes with regular log-concave tails was obtained in
\cite{hu2025dual}. In that setting, the family of distances is constructed
from the tails of the underlying random variables. Here we separate the
partition and separation-tree arguments from this probabilistic origin. Our
results are formulated directly for an initial distance and a family
$(\varphi_j)_{j\in\mathbb Z}$ that satisfy the conditions below. Once these
objects are fixed, no random variables occur in the proofs of
Theorems~\ref{m-1} and~\ref{m-2}. In particular, independence, tail
functions, and moment estimates are used only when the abstract assumptions
are verified in a particular application.

The first assertion of Theorem~\ref{m-2} requires only the basic assumptions
on the family of distances and therefore also applies to the Bernoulli
family. We ask whether the reverse inequality holds for arbitrary Bernoulli
index sets and prove it for bounded convex unconditional subsets of
$\mathbb R^d$.

We also prove a pointwise form of the partition theorem. The set functionals
in the growth condition are replaced by functions indexed by a point and the
two scale parameters. For finite $T$ this gives a finite recursion based on
greedy separated families; the points selected by the recursion also define
a parameterized separation tree.

We now introduce the quantities used in the two main theorems.

We begin with admissible partitions. An admissible sequence of $T$ is an
increasing sequence $(\mathcal A_n)_{n\ge0}$ of partitions of $T$ such that
\[
|\mathcal A_0|=1=:N_0,
\qquad
|\mathcal A_n|\le N_n:=2^{2^n},
\qquad n\ge1.
\]
We shall also call an admissible sequence a \emph{partition scheme}.

We use the following notation for comparisons. For non-negative quantities $a$ and $b$, the relation
\[
a\lesssim b
\]
means that $a\le Cb$ for a numerical constant $C$. We write
$a\asymp b$ if both $a\lesssim b$ and $b\lesssim a$ hold. If the constant is
allowed to depend on a parameter, this parameter is indicated by a subscript,
thus $ a\lesssim_r b$
means $a\le C(r)b$. The notation $\gtrsim$ and $\asymp_r$ is used in the same
way. 

Let $E$ be a real vector space and let $T\subset E$ be non-empty. Consider a
family
\[
\varphi=(\varphi_j)_{j\in\mathbb Z},
\qquad
\varphi_j:E\times E\longrightarrow[0,\infty],
\]
such that
\begin{equation}\label{eq:dj(s,t)=dj(t,s)}
\begin{gathered}
\varphi_j(t,t)=0,\qquad
\varphi_j(s,t)=\varphi_j(t,s),\qquad
\varphi_{j+1}(s,t)\ge\varphi_j(s,t),\\
\varphi_j(s+a,t+a)=\varphi_j(s,t),
\qquad s,t,a\in E,\ j\in\mathbb Z.
\end{gathered}
\end{equation}
We shall also use
\begin{equation}\label{eq:dj<dj+dj}
\varphi_j(s,t)
\le
2\bigl(\varphi_j(s,u)+\varphi_j(u,t)\bigr),
\qquad s,t,u\in E.
\end{equation}
We refer to $(\varphi_j)_{j\in\mathbb Z}$ as a family of distances.

Fix an integer $\kappa\ge5$ and put
\[
r=2^{\kappa-3}.
\]

We use an additional distance $d_0$ at level $0$ and assume that
\begin{equation}\label{eq:initial-distance-compatibility}
d_0(s,t)\le r^{-j}
\quad\Longrightarrow\quad
\varphi_j(s,t)\le1,
\qquad s,t\in T,\ j\in\mathbb Z.
\end{equation}
For every non-empty $A\subset T$, put
\[
\operatorname{diam}_{d_0}(A)
:=
\sup_{s,t\in A}d_0(s,t)
\]
and define
\[
j_0(A)
:=
\sup\left\{
j\in\mathbb Z:
\operatorname{diam}_{d_0}(A)\le r^{-j}
\right\}
\in\mathbb Z\cup\{-\infty,\infty\}.
\]
Thus
\[
j_0(A)=\infty
\quad\text{if }\operatorname{diam}_{d_0}(A)=0,
\]
whereas
\[
j_0(A)=-\infty
\quad\text{if }\operatorname{diam}_{d_0}(A)=\infty.
\]
We put
\[
r^{-\infty}=0,
\qquad
r^{-(-\infty)}=\infty.
\]
For $A=T$, write $j_0=j_0(T)$.

Let $\mathcal A=(\mathcal A_n)_{n\ge0}$ be a partition scheme. For
$n\ge1$ and $A\in\mathcal A_n$, define
\[
j_n^\varphi(A)
:=
\sup\left\{
j\in\mathbb Z:
\varphi_j(s,t)\le2^n
\text{ for all }s,t\in A
\right\}.
\]
We set
\[
\sup\varnothing=-\infty,
\qquad
r^{-(-\infty)}=\infty.
\]
If $A_n(t)$ denotes the member of $\mathcal A_n$ containing $t$, put
\begin{equation}\label{eq:gamma-functional}
\operatorname{Val}_{\varphi}(\mathcal A)
:=
r^{-j_0}
+
\sup_{t\in T}
\sum_{n\ge1}
2^n r^{-j_n^\varphi(A_n(t))}.
\end{equation}
For every non-empty $A\subset T$, define
\[
\gamma_\varphi(A)
:=
\inf_{\mathcal A}
\left[
r^{-j_0(A)}
+
\sup_{t\in A}
\sum_{n\ge1}
2^nr^{-j_n^\varphi(A_n(t))}
\right],
\]
where the infimum is taken over all partition schemes of $A$.

For canonical processes, the family $(\varphi_j)$ is chosen from the
increment geometry of the process. The partition functionals arising in this
way occur in the log-concave, Bernoulli and infinitely divisible settings
discussed above. In what follows we use only the deterministic properties of
$(\varphi_j)$.

For $t\in E$, $c\ge0$, and $j\in\mathbb Z$, set
\[
B_j(t,c)
:=
\{s\in E:\varphi_j(t,s)\le c\},
\qquad
B_j(c):=B_j(0,c).
\]
By translation invariance,
\[
B_j(t,c)=t+B_j(c).
\]
For $a>0$, write
\[
aB_j(t,c)
:=
t+a\bigl(B_j(t,c)-t\bigr).
\]

In the partition construction we shall use the following two relations:
\begin{equation}\label{eq:ball-arithmetic}
aB_j(c)+bB_j(c)
\subset
(a+b)B_j(c),
\qquad a,b\ge0,
\end{equation}
and
\begin{equation}\label{mild condition}
B_{j+1}(2^{n+\kappa})
\subset
\frac1{20}B_j(2^n),
\qquad
j\in\mathbb Z,\ n\ge0.
\end{equation}
The second inclusion is used when the recursive construction changes both
$n$ and $j$. In Section~\ref{5} it follows from the regularity assumptions
on the coordinate tails.

We now state the growth condition. It is the local estimate used to construct
the partition scheme.

\begin{definition}\label{groth con}
Let $(F_n)_{n\ge0}$ be functionals on the subsets of $T$ such that
\[
A\subset C
\quad\Longrightarrow\quad
0\le F_n(A)\le F_n(C)
\]
and
\[
F_{n+1}(A)\le F_n(A).
\]
Fix $j\in\mathbb Z$ and $n\ge1$, and put
\[
m=N_n/2.
\]
Suppose that $H_1,\ldots,H_m$ are non-empty subsets of $T$ and that there
exist $u,t_1,\ldots,t_m\in T$ satisfying
\[
H_\ell\subset4B_{j+2}(t_\ell,2^{n+\kappa}),
\qquad
t_\ell\in B_j(u,2^n),
\]
and
\[
\varphi_{j+1}(t_\ell,t_{\ell'})
>
2^{n+1},
\qquad
\ell\ne\ell'.
\]
We say that $(F_n)$ satisfies the \emph{growth condition} if
\[
F_n\left(\bigcup_{\ell=1}^mH_\ell\right)
\ge
2^nr^{-j-1}
+
\min_{1\le\ell\le m}F_{n+1}(H_\ell).
\]
\end{definition}

Our first theorem is the partition construction.

\begin{theorem}\label{m-1}
Assume \eqref{eq:initial-distance-compatibility},
\eqref{eq:dj(s,t)=dj(t,s)}, \eqref{eq:dj<dj+dj},
\eqref{eq:ball-arithmetic}, and \eqref{mild condition}, and suppose that
$j_0\in\mathbb Z$. Let $(F_n)$ satisfy the growth condition. Then there exist a
partition scheme $(\mathcal A_n)_{n\ge0}$, integers $q_n(A)$, and points
$t_{n,A}\in T$ such that, whenever
\[
A\in\mathcal A_n,\qquad
C\in\mathcal A_{n-1},\qquad
A\subset C,
\]
we have
\[
q_{n-1}(C)
\le
q_n(A)
\le
q_{n-1}(C)+1,
\]
\[
A
\subset
B_{q_n(A)}(t_{n,A},2^n),
\]
and
\[
\sup_{t\in T}
\sum_{n\ge0}
2^nr^{-q_n(A_n(t))}
\lesssim_r
F_0(T)+r^{-j_0}.
\]
Consequently,
\[
\gamma_\varphi(T)
\lesssim_r
F_0(T)+r^{-j_0}.
\]
\end{theorem}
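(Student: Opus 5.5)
This is Talagrand's partition theorem from a growth condition, in the version for families of distances (cf.\ \cite{talab}, Theorem on ``partitioning scheme'' with $\varphi_j$). I would construct $(\mathcal A_n)$, $q_n$, $t_{n,A}$ recursively, carrying along for every $A\in\mathcal A_n$ an additional bookkeeping integer $p_n(A)\in\{0,1,2\}$ measuring how many consecutive steps $q$ has stayed unchanged after a ``large'' piece was cut off. Start with $\mathcal A_0=\{T\}$, $q_0(T)=j_0$ and any $t_{0,T}\in T$. Then $T\subset B_{j_0}(t_{0,T},1)$ by \eqref{eq:initial-distance-compatibility}, since $\operatorname{diam}_{d_0}(T)\le r^{-j_0}$.

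\emph{Inductive step.} Given $C\in\mathcal A_n$ with $j=q_n(C)$ and $C\subset B_j(t_{n,C},2^n)$, I split $C$ into at most $m=N_n$ pieces by a greedy exhaustion at scale $j+1$. Choose $t_1\in C$ maximizing, up to a factor $2$, the quantity $F_{n+1}\bigl(C\cap B_{j+2}(t,2^{n+\kappa})\bigr)$. Set $D_1=C\cap B_{j+1}(t_1,2^{n+2})$, and iterate on $C\setminus D_1$ to obtain $t_2,D_2,\dots$. Stop after $m-1$ steps and let the remainder be the last piece. The pieces $D_\ell$ are assigned $q_{n+1}=j+1$, and by \eqref{mild condition} and \eqref{eq:dj<dj+dj} they satisfy $D_\ell\subset B_{j+1}(t_\ell,2^{n+1})$. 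The remainder keeps $q_{n+1}=j$, with the same center $t_{n,C}$. The centers satisfy $\varphi_{j+1}(t_\ell,t_{\ell'})>2^{n+1}$ by construction. Using \eqref{eq:ball-arithmetic} and the quasi-triangle inequality, $H_\ell:=C\cap B_{j+2}(t_\ell,2^{n+\kappa})$ lies in $4B_{j+2}(t_\ell,2^{n+\kappa})$, so the growth condition applies to $H_1,\dots,H_m$. The factor $4$ absorbs the constant $2$ in \eqref{eq:dj<dj+dj}, and $\kappa\ge5$ leaves room. It yields
\[
F_n(C)\ge 2^n r^{-j-1}+\min_\ell F_{n+1}(H_\ell).
\]
Combined with the greedy maximality, the remainder $R$ then satisfies
\[
F_{n+1}\bigl(R\cap B_{j+2}(t,2^{n+\kappa})\bigr)\le F_n(C)-2^nr^{-j-1}.
\]
This is the usual ``the remainder lost mass'' estimate. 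Admissibility holds because $N_n\cdot N_n\le N_{n+1}$.

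\emph{Summation.} Fix $t$ and follow its chain $A_n(t)$. Each step has one of two types. Either $q$ increases by one, or $t$ lies in a remainder and $q$ is unchanged. I would use Talagrand's functional argument, defining
\[
a(n)=2^nr^{-q_n(A_n(t))}.
\]
Along stretches where $q$ is constant, $a(n)$ doubles each step, so such a stretch is dominated by its last term. At a step where $q$ increases from $j$ to $j+1$, $a$ gets multiplied by $2/r=2^{4-\kappa}\le 1/2$. Hence $\sum_n a(n)$ is geometrically controlled by the terms $a(n)$ at the ``peaks'' $n$ where $t$ is in a remainder at step $n$ and $q$ increases at step $n+1$. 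For these peaks I would apply the growth condition at the next level to the remainder piece. Then $2^nr^{-j-1}$ is bounded by a telescoping decrement
\[
F_n(A_n(t))-F_{n+2}(A_{n+2}(t))
\]
(or similar), and the total telescopes to $\lesssim F_0(T)$. The initial term $r^{-j_0}$ covers $n=0$.

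\emph{Main obstacle.} The delicate step is making this telescoping rigorous. The decrement gained is for $F_{n+1}$ restricted to a $j+2$-ball, while the next piece containing $t$ lives at scale $j+1$. This mismatch is exactly why \eqref{mild condition} (inclusion of $B_{j+1}(2^{n+\kappa})$ in a small multiple of $B_j(2^n)$) and the bookkeeping $p_n$ are needed. When $q$ drops back to being unchanged, the estimate must be transferred across the change of $j$. I would handle it as in \cite{talab}: strengthen the induction hypothesis to
\[
F_n(A)\le F_{n-1}(C)-\text{(gain)},
\]
with the gain kept for two steps via $p_n$, and check that each peak pays at least $c(r)\,2^nr^{-q_n}$.

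The final inequality $\gamma_\varphi(T)\lesssim_r F_0(T)+r^{-j_0}$ then follows. Indeed, $A\subset B_{q_n(A)}(t_{n,A},2^n)$ together with \eqref{eq:dj<dj+dj} gives $\varphi_{q_n(A)}(s,s')\le 2^{n+2}$ on $A$. Passing to $\mathcal A_{n-2}$, or absorbing the factor $4$ by monotonicity in $j$ and an index shift, gives
\[
j^\varphi_{n+2}(A)\ge q_n(A).
\]
The shift costs only a constant factor in the sum.
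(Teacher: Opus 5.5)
There is a genuine gap in the summation step, which is where the content of the theorem lies. Your reduction of $\sum_n a(n)$ to the peaks is fine; it is the paper's first use of Lemma~\ref{tala lem}. The bound for the sum over peaks, however, is only gestured at, and the mechanism you propose cannot work.

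Your remainder estimate is local. It controls $F_{n+1}\bigl(R\cap B_{j+2}(s,2^{n+\kappa})\bigr)$ for $s\in R$, not $F_{n+1}(R)$. So no induction hypothesis of the form $F_n(A)\le F_{n-1}(C)-\text{gain}$ is available. To use the gain earned at a peak $n$ with label $j$, you must reach an index $n'\ge n$ with $A_{n'}(t)\subset B_{j+2}(t,2^{n-1+\kappa})$. But $A_{n'}(t)$ is only known to lie in a $\varphi_{q_{n'}}$-ball of radius $2^{n'}$. Hence \eqref{mild condition} guarantees this inclusion only if $q_{n'}\ge j+2$ and, roughly, $n'-n\le\kappa(q_{n'}-j-1)$. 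After a peak the label can stay constant for arbitrarily many steps. A bounded-lag device, such as a counter $p_n\in\{0,1,2\}$ or the difference $F_n(A_n(t))-F_{n+2}(A_{n+2}(t))$, therefore cannot stop $n'$ from outrunning the scale.

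The missing idea is the paper's second application of Lemma~\ref{tala lem}, to the peak sequence $(a(n_k))$. It gives $a(n_\ell)<2^{\ell-k}a(n_k)$. Hence $n_\ell-n_k\le(\ell-k)+(\kappa-3)d$, where $d$ is the increase of the label from $n_k$ to $n_\ell$, as in \eqref{eq:n-gap}. Restricting $k$ to one residue class modulo $\kappa$ then forces $d\ge\ell-k\ge\kappa$. Only with these two facts does Lemma~\ref{lem:ball-calculus}(iii) yield \eqref{eq:ball-chain-inclusion}, so that consecutive selected peaks telescope as in \eqref{eq:telescoping-chain}.

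Two smaller points also need repair.
\begin{itemize}
\item Choosing $t_\ell$ only up to a factor $2$ destroys the subtractive gain. You would get $F_{n+1}\bigl(R\cap B_{j+2}(s,2^{n+\kappa})\bigr)\le2\bigl(F_n(C)-2^nr^{-j-1}\bigr)$, which does not telescope. Use additive slack of at most $2^{n-1}r^{-j-1}$ instead.
\item \eqref{mild condition} passes from $\varphi_{j+1}$ to $\varphi_j$, so it cannot shrink $C\cap B_{j+1}(t_\ell,2^{n+2})$ into $B_{j+1}(t_\ell,2^{n+1})$. Cut with radius $2^{n+1}$ directly; this still gives $\varphi_{j+1}(t_\ell,t_{\ell'})>2^{n+1}$.
\end{itemize}
With these repairs, and the double selection in place of the $p_n$ bookkeeping, your greedy construction would also prove the theorem. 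At a peak the piece $A_n(t)$ is a remainder, and your local estimate may be taken at $s=t$.

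The paper avoids the maximization altogether. It defines $j_n(t)$ pointwise through $M_n(t)$ and proves the sum bound for that explicit sequence (Proposition~\ref{inequality-global}). It then uses the growth condition only to show that the set $D_A$ of points whose label increases contains no $N_n/2$ points that are $2^{n+1}$-separated for $\varphi_{q+1}$.
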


Theorem~\ref{m-1} will also be applied to functionals defined from
parameterized separation trees. This application is used in the proof of the
second assertion of Theorem~\ref{m-2}.

We next introduce the quantity used in the second theorem. The full definition
of a parameterized separation tree is given in Section~\ref{4}. We record
here the notation needed for its value.

Let $\mathcal T$ be a finite parameterized separation tree with non-empty
root $H$. For a node $A$, write $c(A)$ for its set of children, and put
\[
S_{\mathcal T}
:=
\bigcup\{A\in\mathcal T:c(A)=\emptyset\}.
\]
Every non-root node $A$ carries two parameters
\[
\mathbf n(A)\in\{1,2,\ldots\},
\qquad
\mathbf j(A)\in\mathbb Z\cup\{\infty\}.
\]
Define
\[
\operatorname{Size}(\mathcal T)
:=
r^{-j_0(H)}
+
\inf_{t\in S_{\mathcal T}}
\sum_{\substack{
A\in\mathcal T\setminus\{H\}\\
t\in A}}
2^{\mathbf n(A)}r^{-\mathbf j(A)}.
\]
Finally, put
\begin{equation}\label{eq:rho-T}
\rho_\varphi(T)
:=
\sup\left\{
\operatorname{Size}(\mathcal T):
\mathcal T\text{ is a parameterized separation tree on a non-empty }
H\subset T
\right\}.
\end{equation}

We shall compare $\rho_\varphi$ with the following measure quantity. Let
$\mu$ be a probability measure on $T$ with countable support. For $n\ge1$,
set
\[
k_n^\mu(t)
:=
\sup\left\{
j\in\mathbb Z:
\mu\bigl(B_j(t,2^n)\bigr)\ge N_n^{-1}
\right\},
\]
where
\[
\sup\varnothing=-\infty.
\]
Put
\[
I_{\mu,\varphi}(t)
:=
r^{-j_0}
+
\sum_{n\ge1}
2^nr^{-k_n^\mu(t)}
\]
and define
\begin{equation}\label{eq:M-intro}
\mathcal M_\varphi(T)
:=
\inf_\mu
\sup_{t\in T}
I_{\mu,\varphi}(t),
\end{equation}
where the infimum is taken over all probability measures on $T$ with
countable support.

\begin{theorem}\label{m-2}
Assume \eqref{eq:initial-distance-compatibility},
\eqref{eq:dj(s,t)=dj(t,s)}, and \eqref{eq:dj<dj+dj}. Then
\[
\rho_\varphi(T)
\lesssim_r
\mathcal M_\varphi(T).
\]
If, in addition, \eqref{eq:ball-arithmetic} and
\eqref{mild condition} hold, then
\[
\rho_\varphi(T)
\asymp_r
\mathcal M_\varphi(T)
\asymp_r
\gamma_\varphi(T).
\]
\end{theorem}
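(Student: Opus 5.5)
We argue in three steps: first $\rho_\varphi\lesssim_r\mathcal M_\varphi$ under the basic assumptions only, then $\mathcal M_\varphi\lesssim_r\gamma_\varphi$, and finally $\gamma_\varphi\lesssim_r\rho_\varphi$ via Theorem~\ref{m-1}. Since the definition of a parameterized separation tree is deferred to Section~\ref{4}, we assume it has the expected form. Each node $A$ with children $c(A)$ has children that are $\varphi_{\mathbf j(B)}$-separated at scale $2^{\mathbf n(B)}$ (roughly $\varphi_{\mathbf j(B)}$-distance $>2^{\mathbf n(B)+1}$ between distinct children). The parent sits inside a $\varphi$-ball of comparable parameter. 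There are at least $N_{\mathbf n(B)}$ children, or a number compatible with the exponent $\mathbf n(B)$. The parameter $\mathbf n$ strictly increases along branches.

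\textbf{First inequality.} Fix $\mu$ with $\sup_t I_{\mu,\varphi}(t)<\infty$ and a finite tree $\mathcal T$ on $H$. We descend from the root and choose at each node the child of smallest $\mu$-mass. Let $B\in c(A)$ have parameters $(n,j)=(\mathbf n(B),\mathbf j(B))$. The children are separated in $\varphi_j$ at scale $2^{n+1}$, so by \eqref{eq:dj<dj+dj} the balls $B_j(s,2^{n-1})$ for $s$ in distinct children are disjoint. There are about $N_n$ children, so the chosen child $B$ receives mass at most $\mu(A)/N_n$, or comparably at most $N_{n}^{-1}$ from $\mu$ restricted to balls. Iterating down to a leaf point $t\in S_{\mathcal T}$, every node $B\ni t$ on the branch satisfies $\mu(B_{j}(t,2^{n-1}))<N_{n}^{-1}$. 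Hence $k^\mu_{n-1}(t)<\mathbf j(B)$, so $r^{-\mathbf j(B)}\le r^{-k^\mu_{n-1}(t)-1}$ after using monotonicity of $\varphi_j$ in $j$. Since $2^{n}=2\cdot2^{n-1}$ and the $\mathbf n$'s along a branch are distinct, summing gives
\[
\sum_{B\ni t}2^{\mathbf n(B)}r^{-\mathbf j(B)}\lesssim_r I_{\mu,\varphi}(t).
\]
The root term is handled by $j_0(H)\ge j_0$. Taking the infimum over $t$ and the supremum over trees yields $\rho_\varphi\lesssim_r\mathcal M_\varphi$. The delicate point is the index shift $n\to n-1$ together with $N_n$ versus $N_{n-1}$. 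This is absorbed because $N_{n-1}^2=N_n$: we choose the child of minimal mass among $N_n/2\ge N_{n-1}$ candidates.

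\textbf{$\mathcal M_\varphi\lesssim_r\gamma_\varphi$.} This is the standard measure-from-partition argument. Take an admissible sequence nearly attaining $\gamma_\varphi(T)$. Pick a point $t_{n,A}$ in each $A\in\mathcal A_n$ and set
\[
\mu=\sum_{n}2^{-n-1}\sum_{A\in\mathcal A_n}N_n^{-1}\delta_{t_{n,A}}.
\]
For $t\in A\in\mathcal A_{n}$ we have $\varphi_{j}(t,t_{n,A})\le 2^n$ with $j=j^\varphi_n(A)$, so $\mu(B_j(t,2^n))\ge 2^{-n-1}N_n^{-1}\ge N_{n+1}^{-1}$. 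Using $2^{n+1}\ge2^n$, this gives $k^\mu_{n+1}(t)\ge j^\varphi_n(A_n(t))$. The resulting bound is $I_\mu(t)\lesssim\operatorname{Val}_\varphi(\mathcal A)$, up to the first term, which is controlled by $r^{-j_0}$ via \eqref{eq:initial-distance-compatibility}. This step only needs the basic assumptions.

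\textbf{$\gamma_\varphi\lesssim_r\rho_\varphi$, the main obstacle.} We apply Theorem~\ref{m-1} with $F_n(A):=\rho_{\varphi}^{(n)}(A)$. This is the supremum of $\operatorname{Size}$, without the $r^{-j_0}$ term, over trees on subsets of $A$ whose first-generation parameters satisfy $\mathbf n\ge n$. Monotonicity in $A$ and the decrease in $n$ are immediate. For the growth condition, take the $H_\ell$, $t_\ell$, $u$ as in Definition~\ref{groth con}. Choose for each $\ell$ a near-optimal tree $\mathcal T_\ell$ on a subset of $H_\ell$ with parameters $\ge n+1$. We glue them under a new root $\bigcup H_\ell$, with children the roots of the $\mathcal T_\ell$ carrying labels $(\mathbf n,\mathbf j)=(n,j+1)$. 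Separation of the children follows from $\varphi_{j+1}(t_\ell,t_{\ell'})>2^{n+1}$ together with $H_\ell\subset4B_{j+2}(t_\ell,2^{n+\kappa})$. To see this, use \eqref{eq:ball-arithmetic} and \eqref{mild condition} to place $H_\ell$ in a small dilate of $B_{j+1}(t_\ell,2^n)$, then apply the quasi-triangle inequality. The new branch contributes $2^nr^{-j-1}$ plus at least $\min_\ell F_{n+1}(H_\ell)$. This is exactly the growth condition, provided the tree axioms of Section~\ref{4} match these separation constants; verifying that matching is where the work lies. Theorem~\ref{m-1} then gives $\gamma_\varphi(T)\lesssim_r F_0(T)+r^{-j_0}\lesssim\rho_\varphi(T)$. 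Here $r^{-j_0}\le\rho_\varphi$ is witnessed by the trivial tree, and the case $j_0\notin\mathbb Z$ is handled separately, since then both sides are infinite or $T$ is a point.
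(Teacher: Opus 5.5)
Your overall structure matches the paper's. You prove the cycle $\rho_\varphi\lesssim_r\mathcal M_\varphi\lesssim_r\gamma_\varphi\lesssim_r\rho_\varphi$. The middle step is the direct half of Proposition~\ref{equivalence}, and your measure construction matches it. The last step is the gluing argument of Theorem~\ref{thm:upper_bound} fed into Theorem~\ref{m-1}.

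\textbf{The first step fails as written.} You choose the child of smallest $\mu$-mass. But what you must control at the final leaf point $t$ is $\mu(B_j(t,2^{n-1}))$, and this ball is not contained in the child $B$: it can be charged by points of $T\setminus B$, even of $T\setminus H$. If $\mu(H)=0$, every child has mass $0$, and your rule may pick a child whose points all have heavy balls. So nothing about $\mu(B)$, such as $\mu(B)\le\mu(A)/N_n$, implies the claimed ball bound. That bound is also needed for \emph{every} point of $B$, since the leaf is fixed only at the end.

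The fix uses the disjointness you already observed. Choose the child minimizing $\mu(U_B)$, where $U_B:=\bigcup_{s\in B}B_j(s,2^{n-1})$. These neighbourhoods are pairwise disjoint, so some child has $\mu(U_B)\le 2/N_n$. Hence $\mu(B_j(t,2^{n-1}))\le 2/N_n<N_{n-1}^{-1}$ for all $t\in B$, once $n\ge2$. Note that pigeonhole gives $2/N_n$, not $<N_n^{-1}$, and this beats $N_{n-1}^{-1}$ only from $n=2$ on. The lowest layer must be bounded separately: $\varphi_{j_0(H)}\le1$ on $H$ forbids separated children with $\mathbf j$ much below $j_0(H)$.

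This is the paper's proof of Theorem~\ref{main theorem2}, with its actual axiom $\varphi_{\mathbf j+2}(B,B')>2^{\mathbf n}$ and $|c(A)|=N_{\mathbf n}/2$. If every child contained a point $t_l$ with $k^\mu_{n-2}(t_l)>j+2$, the balls $B_{j+2}(t_l,2^{n-2})$ would be pairwise disjoint with total mass at least $N_n/(2N_{n-2})>1$. So some child $A$ has $\sup_{t\in A}k^\mu_{n-2}(t)\le j+2$, and the layers $\mathbf n\le2$ are handled via $\mathbf j(A)\ge j_0(H)-1$.

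\textbf{The third step.} The matching you left open is Lemma~\ref{lem:ball-calculus}(ii): $\varphi_{j+2}(H_\ell,H_{\ell'})>2^n$. It has to be proved additively, not with the quasi-triangle inequality. Suppose $x\in H_\ell$, $y\in H_{\ell'}$ and $x-y\in B_{j+2}(2^n)$. Then $t_\ell-t_{\ell'}=(t_\ell-x)+(x-y)+(y-t_{\ell'})\in 9B_{j+2}(2^{n+\kappa})\subset B_{j+1}(2^n)$ by \eqref{eq:ball-arithmetic} and \eqref{mild condition}, a contradiction.

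Your route does not work here. Since $\varphi_j$ is not homogeneous, $t_\ell-x\in\frac15B_{j+1}(2^n)$ only gives $\varphi_{j+1}(t_\ell,x)\le 2^n$. The factor $2$ in \eqref{eq:dj<dj+dj} then leaves no lower bound. Moreover, the fat sets $H_\ell$ are in general not separated at the centres' level $j+1$ and threshold $2^{n+1}$. This is why the tree axiom is stated at level $\mathbf j+2$ with threshold $2^{\mathbf n}$.

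With that axiom, your label $\mathbf j=j+1$ is admissible by monotonicity in $j$ and contributes $2^nr^{-j-1}$. The paper instead keeps $\mathbf j=j$ and puts $r^{-\mathbf j-1}$ into $F_n$, which amounts to the same.
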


The two inequalities relating $\rho_\varphi$ and
$\mathcal M_\varphi$ are proved separately. The inequality
\[
\rho_\varphi(T)\lesssim_r\mathcal M_\varphi(T)
\]
follows directly from the separation of distinct children and a counting
argument with an arbitrary probability measure. For the reverse inequality,
Section~\ref{4} associates a sequence of set functionals $(F_n)_{n\ge0}$
with separation trees. Joining trees below a separated family gives the
growth condition of Definition~\ref{groth con}. Theorem~\ref{m-1} then gives
\[
\gamma_\varphi(T)\lesssim_r\rho_\varphi(T).
\]
Proposition~\ref{equivalence} then gives
\[
\mathcal M_\varphi(T)\lesssim_r\rho_\varphi(T).
\]

There is a further consequence for Bernoulli processes. Let
$(\varepsilon_i)_{i\ge1}$ be independent signs and set
\[
\mathcal E_t=\sum_{i\ge1}t_i\varepsilon_i,
\qquad
b(T)=\textsf E\sup_{t\in T}\mathcal E_t.
\]
For the Bernoulli family
\[
\varphi_j^{\mathrm B}(s,t)
=
\sum_{i\ge1}
\bigl((r^j(s_i-t_i))^2\wedge1\bigr),
\]
Combining Theorem~\ref{m-2} with
Theorem~\ref{bernoulli partition}, we obtain
\[
\rho_{\varphi^{\mathrm B}}(T)
\lesssim_r
b(T)
\]
for arbitrary $T\subset\ell^2$. We ask whether the reverse estimate
\[
b(T)
\lesssim_r
\rho_{\varphi^{\mathrm B}}(T)
\]
holds in general. In Section~\ref{6} we prove it when
$T\subset\mathbb R^d$ is bounded, convex and unconditional. Hence
\[
b(T)\asymp_r\rho_{\varphi^{\mathrm B}}(T)
\]
for this class.

For finite $T$, The growth condition for points gives a direct recursion. The
set variable $A$ is replaced by a point $t$, while the two indices $n$ and
$j$ are retained. At each step a maximal separated family is chosen
greedily. The points selected in this construction also define a
parameterized separation tree. Section~\ref{7} shows that, for finite $T$, only finitely many values of the
index $j$ need to be considered. Under the assumptions stated there, the
recursion can be evaluated in polynomial time.

The rest of the paper is organized as follows.
Section~\ref{preliminaries} recalls the results used later and proves the
equivalence between the partition and measure formulations.
Section~\ref{3} proves Theorem~\ref{m-1} and its pointwise version.
Section~\ref{4} introduces parameterized separation trees and proves
Theorem~\ref{m-2}. Sections~\ref{5} and~\ref{6} give the applications
described above. Section~\ref{7} contains the finite-set construction.

\section{Classical results and basic equivalences}
\label{preliminaries}

We recall the Gaussian and Bernoulli examples and then compare the partition and measure functionals.

By \textit{universal} or \textit{absolute} constants we mean numbers independent of all parameters. Throughout the paper, $L$ denotes a universal constant whose value may change from one occurrence to the next.

We say that $F$ is a functional on $T$ if $F$ maps the subsets of $T$ to $[0,\infty]$ and is non-decreasing under inclusion: for every $A\subset B\subset T$, we have $F(A)\le F(B)$.

We first consider the Gaussian case.

Let $G_t=\sum\limits_{i\ge1}t_i g_i$ for $t\in\ell^2$, where $(g_i)_{i\ge1}$ is a sequence of independent and identically distributed (i.i.d.) standard Gaussian random variables. Then $(G_t)_{t\in T}$ is the canonical Gaussian process over $T\subset\ell^2$. Its natural metric is
$$d(s,t)=(\textsf{E}(G_s-G_t)^2)^{1/2}=\|t-s\|_2.$$
The corresponding family of distances is
\[
\varphi_j(s,t)=r^{2j}\|s-t\|_2^2, \qquad s,t\in T.
\]
We take $d_0(s,t)=\|s-t\|_2$. Since
\[
\varphi_j(s,t)=r^{2j}d_0(s,t)^2,
\]
condition~\eqref{eq:initial-distance-compatibility} holds.
We write $\gamma_G$ for this functional.

\begin{theorem}[Talagrand \cite{talab}]
    $$\gamma_G(T)\asymp_r\textsf{E}[\sup\limits_{t\in T} G_t].$$
\end{theorem}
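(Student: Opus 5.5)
This is the classical majorizing measure theorem of Talagrand for Gaussian processes, and I would prove it in two directions. Under the stated normalization, $\gamma_G(T)$ is comparable to Talagrand's $\gamma_2(T,d)$ with $d=\|\cdot\|_2$, and the plan is to pass through that comparison.

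\emph{Reduction to $\gamma_2$.} For a partition scheme $\mathcal A$ and $A\in\mathcal A_n$, the condition $\varphi_j(s,t)=r^{2j}\|s-t\|_2^2\le 2^n$ for all $s,t\in A$ is equivalent to $r^{-j}\ge 2^{-n/2}\operatorname{diam}(A)$. Hence $2^nr^{-j_n^\varphi(A)}$ lies between $2^{n/2}\operatorname{diam}(A)$ and $r\,2^{n/2}\operatorname{diam}(A)$, the discrepancy coming from the integer rounding of $j$. Likewise $r^{-j_0}\asymp_r\operatorname{diam}(T)$. Therefore
\[
\operatorname{Val}_\varphi(\mathcal A)\asymp_r \operatorname{diam}(T)+\sup_{t\in T}\sum_{n\ge1}2^{n/2}\operatorname{diam}(A_n(t)),
\]
and taking the infimum over schemes gives $\gamma_G(T)\asymp_r\gamma_2(T,d)+\operatorname{diam}(T)$. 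The diameter term is harmless, since $\operatorname{diam}(T)\lesssim \textsf E\sup_{s,t}|G_s-G_t|= 2\,\textsf E\sup_t G_t$ by symmetry. It therefore suffices to prove $\gamma_2(T,d)\asymp \textsf E\sup_T G_t$, and we may restrict to finite $T$ by the definition of the supremum.

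\emph{Upper bound (generic chaining).} Take a near-optimal scheme and choose points $\pi_n(t)\in A_n(t)$. Write $G_t-G_{\pi_0(t)}=\sum_n (G_{\pi_n(t)}-G_{\pi_{n-1}(t)})$. The Gaussian tail bound $P(|G_s-G_u|>v\,d(s,u))\le 2e^{-v^2/2}$ is then combined with a union bound over the at most $N_nN_{n-1}$ pairs at level $n$, using $v=u\,2^{n/2}$. Summing the failure probabilities over $n$ gives $P\bigl(\sup_t|G_t-G_{t_0}|>Lu\,S\bigr)\lesssim e^{-u^2}$, where $S=\sup_t\sum_n 2^{n/2}\operatorname{diam}(A_{n-1}(t))$. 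Integrating in $u$ yields $\textsf E\sup_T G_t\lesssim\gamma_2(T,d)$.

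\emph{Lower bound.} This is the main obstacle. My plan is to apply Theorem~\ref{m-1} with the Gaussian family, for which \eqref{eq:ball-arithmetic} holds because the balls are Euclidean balls. Condition \eqref{mild condition} also holds: $B_{j+1}(2^{n+\kappa})$ has radius $r^{-j-1}2^{(n+\kappa)/2}$, while $\frac1{20}B_j(2^n)$ has radius $r^{-j}2^{n/2}/20$. With $r=2^{\kappa-3}$ and $\kappa\ge5$, the inequality $2^{\kappa/2}\le r/20$ needs $\kappa/2\le\kappa-3-\log_2 20$, which fails for small $\kappa$. I would therefore either take $\kappa$ large, noting that the constants depend on $r$ anyway, or absorb the discrepancy by a constant rescaling. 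The functionals are $F_n(A)=L\,\textsf E\sup_{t\in A}G_t$, independent of $n$ and monotone.

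The growth condition then requires Sudakov minoration together with concentration, which is Talagrand's argument. Since $\varphi_{j+1}(t_\ell,t_{\ell'})>2^{n+1}$, the points satisfy $\|t_\ell-t_{\ell'}\|\ge r^{-j-1}2^{(n+1)/2}$, so Sudakov gives $\textsf E\max_\ell G_{t_\ell}\gtrsim r^{-j-1}2^{n/2}\sqrt{\log m}\asymp 2^n r^{-j-1}$, because $\log m\asymp 2^n$. Each $H_\ell$ lies in a Euclidean ball around $t_\ell$ of radius $\sigma\lesssim r^{-j-2}2^{(n+\kappa)/2}$. Gaussian concentration of $\sup_{H_\ell}(G_t-G_{t_\ell})$ at scale $\sigma$, combined with a union bound over $m$ sets, costs at most $L\sigma 2^{n/2}$. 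This cost is a small fraction of $2^nr^{-j-1}$ once $r\gg 2^{\kappa/2}$. We obtain
\[
\textsf E\sup_{\cup H_\ell}G_t\ge \textsf E\max_\ell G_{t_\ell}+\min_\ell \textsf E\sup_{H_\ell}(G_t-G_{t_\ell})-L\sigma2^{n/2},
\]
which yields the growth condition after adjusting $L$. Theorem~\ref{m-1} then gives $\gamma_G(T)\lesssim_r \textsf E\sup_T G_t+r^{-j_0}$, and $r^{-j_0}\lesssim_r\operatorname{diam}(T)\lesssim\textsf E\sup_T G_t$.

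The delicate points are the bookkeeping of constants in the concentration-plus-Sudakov step, and checking \eqref{mild condition} for the given relation between $r$ and $\kappa$. If that check fails, I would fall back on citing \cite{talab} directly through the $\gamma_2$ reduction above.
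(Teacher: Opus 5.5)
Your proposal is correct. The paper itself gives no argument for this statement: it is quoted from \cite{talab} as background.

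\emph{What you do differently.} You reconstruct Talagrand's own proof. The upper bound is generic chaining. The lower bound uses Sudakov minoration plus Gaussian concentration to verify a growth condition for $F(H)=L\,\textsf{E}\sup_{t\in H}G_t$. The difference is that you replace Talagrand's partition theorem by the paper's Theorem~\ref{m-1}. This substitution is legitimate: the proof of Theorem~\ref{m-1} is purely deterministic and never uses the Gaussian result, so there is no circularity. It also shows the Gaussian theorem to be an instance of the paper's abstract machinery, which the paper itself never does.

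\emph{What it costs.} For the Gaussian family, \eqref{mild condition} reads $20\cdot 2^{\kappa/2}\le 2^{\kappa-3}$, which holds only for $\kappa\ge 15$. Likewise, the concentration loss, of order $r^{-j-2}2^{\kappa/2}2^{n}$, is absorbed by the Sudakov gain $c\,2^{n}r^{-j-1}$ only when $2^{\kappa/2-3}$ exceeds a universal constant. Your first remedy is the right one. Fix one large $\kappa'$ and obtain $\gamma_2(T,d)\le\gamma_G^{(r')}(T)\lesssim\textsf{E}\sup_{t\in T}G_t$ with a universal constant. Then return to an arbitrary $r=2^{\kappa-3}$, $\kappa\ge5$, through your comparison $\gamma_G(T)\asymp_r\gamma_2(T,d)$. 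The ``constant rescaling'' alternative would not help, because the ratio $2^{\kappa/2}/r$ in \eqref{mild condition} is unchanged when the metric is rescaled.

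\emph{One small correction.} For infinite $T$, do not reduce the lower bound to finite sets. That reduction would require $\gamma_2(T,d)\lesssim\sup_F\gamma_2(F,d)$, which is not automatic. Instead, define $F(H)$ through finite subsets of $H$, as the paper does for $\textsf{E}\sup$. Then verify the growth condition by approximating each $H_\ell$ by finite subsets, and apply Theorem~\ref{m-1} to $T$ itself.
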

Here the family $(\varphi_j)$ is determined by the single metric $d$.

Talagrand's measure formulation is the following; see \cite{tala1}.

For a probability measure $\mu$ on $T$, define
\[
I_\mu(x)=\int_0^\infty \sqrt{\log \frac{1}{\mu(B(x,\varepsilon))}}\, d\varepsilon,
\qquad x\in T,
\]
where
\[
B(x,\varepsilon)=\{t\in T:\ d(x,t)\le \varepsilon\}.
\]

One has

\begin{theorem}[Talagrand \cite{tala1}]
\[
\textsf{E}\sup_{t\in T} G_t \asymp \inf_{\mu}\sup_{t\in T} I_\mu(t),
\]
where the infimum is taken over all probability measures on $T$. Moreover, the infimum may be restricted to probability measures with countable support.
\end{theorem}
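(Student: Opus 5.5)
The plan is to reduce everything to the partition functional $\gamma_G$ and use the preceding theorem, $\gamma_G(T)\asymp_r\textsf E\sup_{t\in T}G_t$. It then suffices to prove
\[
\inf_{\mu\ \text{countable support}}\ \sup_{t\in T}I_\mu(t)\ \lesssim\ \gamma_G(T)
\qquad\text{and}\qquad
\gamma_G(T)\ \lesssim\ \inf_\mu\sup_{t\in T}I_\mu(t),
\]
where the second infimum runs over all probability measures. Together these give the equivalence. They also give the restriction to countable support, since the measure built in the first inequality is countably supported. We may assume $T$ is countable (or finite) by the convention defining $\textsf E\sup$.

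\emph{From partitions to a measure.} Take an admissible sequence $(\mathcal A_n)$ with $\sup_t\sum_n 2^{n/2}\Delta(A_n(t))\le 2\gamma_2(T)$, where $\Delta$ is the $d$-diameter; this is equivalent to $\gamma_G$ up to constants depending on $r$, by the usual conversion between $r^{-j}$ and diameters. In each $A\in\mathcal A_n$ choose a point $x_A$ and put
\[
\mu=\sum_{n\ge0}\ \sum_{A\in\mathcal A_n}2^{-n-1}N_n^{-1}\delta_{x_A}.
\]
This is a probability measure, up to normalization $\le1$, with countable support. For $t\in T$ and $\varepsilon\ge\Delta(A_n(t))$ the ball $B(t,\varepsilon)$ contains $x_{A_n(t)}$. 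Hence $\log(1/\mu(B(t,\varepsilon)))\lesssim 2^n$ there. Splitting $\int_0^\infty$ into the ranges $[\Delta(A_{n+1}(t)),\Delta(A_n(t))]$ gives $I_\mu(t)\lesssim\sum_n2^{n/2}\Delta(A_n(t))$. For $\varepsilon\ge\Delta(T)$ the integrand vanishes after normalizing, which handles the top range.

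\emph{From a measure to partitions.} This is the main step. Fix $\mu$ with $M:=\sup_tI_\mu(t)<\infty$. Set $\varepsilon_n(t):=\inf\{\varepsilon:\mu(B(t,\varepsilon))\ge N_n^{-1}\}$. Comparing the integral with its dyadic pieces gives
\[
\sum_n 2^{n/2}\varepsilon_n(t)\lesssim I_\mu(t).
\]
I would then define functionals
\[
F_n(A):=\sup_{t\in A}\sum_{k\ge n}2^{k/2}\varepsilon_k(t)
\]
plus a small correction, and verify Talagrand's growth condition for them. The argument runs as follows. Take $m=N_n/2$ points $t_\ell$ that are $a$-separated, and sets $H_\ell$ lying in the balls $B(t_\ell,a/8)$. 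The balls $B(t_\ell,a/4)$ are disjoint, so some $\ell$ has $\mu(B(t_\ell,a/4))\le 2N_n^{-1}\le N_{n-1}^{-1}$. For points $t\in H_\ell$ this forces $\varepsilon_{n-1}(t)\gtrsim a$, which yields the gain $\gtrsim 2^{n/2}a$ needed in the growth condition. The partition theorem from \cite{talab} then gives $\gamma_2(T)\lesssim F_0(T)+\Delta(T)\lesssim M$. The last bound holds because $\Delta(T)\lesssim\varepsilon_1$-type terms for two far points; alternatively one bounds it directly from $I_\mu$ at two points.

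The obstacle I expect is this growth verification. The gain is produced at some point of one particular $H_\ell$, but the growth condition requires comparison with $\min_\ell F_{n+1}(H_\ell)$. One must therefore shift indices carefully, using $F_n$ versus $F_{n+1}$ and possibly replacing $\varepsilon_k$ by $\min(\varepsilon_k,\cdot)$ truncated at the scale $a$. If this is too delicate, the fallback is Talagrand's direct chaining: bound $\textsf E\sup_tG_t$ by $\sup_tI_\mu(t)$ through a union bound over the nets $\{x:\mu(B(x,\varepsilon_n(x)))\ge N_n^{-1}\}$, selected greedily with disjoint balls so that each net has cardinality at most $N_n$.
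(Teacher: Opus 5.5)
The paper gives no proof of this statement; it quotes it from \cite{tala1}. The relevant comparison is therefore with Proposition~\ref{equivalence} and Section~\ref{4}. Your argument is correct, and it is a reduction to the partition form stated just before. The deep inequality $\inf_\mu\sup_tI_\mu(t)\lesssim\textsf E\sup_tG_t$ is inherited entirely from $\gamma_G\lesssim\textsf E\sup_tG_t$, which is legitimate since the paper takes that theorem as given.

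Your partition-to-measure step is exactly the first half of the proof of Proposition~\ref{equivalence}: mass $1/(2^nN_n)$ at one point of each cell, rewritten with diameters instead of $r^{-j_n^\varphi}$. It is also what makes countable support automatic.

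For the converse, the paper's framework either defers to \cite[Theorem~11.6.3]{talab} or goes through separation trees in Section~\ref{4}. There, $\gamma\lesssim\rho$ comes from the growth condition for tree functionals, and $\rho\lesssim\mathcal M$ from the counting argument of Theorem~\ref{main theorem2}. You instead verify the growth condition directly for a functional built from $\mu$. This is shorter and needs no trees, while the tree route costs more but yields the extra characterization by $\rho$. Your Fernique-type fallback is also a valid, even more direct, proof of $\textsf E\sup_tG_t\lesssim\sup_tI_\mu(t)$.

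Two remarks on details.

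First, the obstacle you anticipate does not exist. The whole set $H_{\ell_0}$ lies in the light ball $B(t_{\ell_0},a/4)$. Since $2/N_n<1/N_{n-1}$ for all $n\ge1$, this gives $\varepsilon_{n-1}(t)\ge a/8$ for \emph{every} $t\in H_{\ell_0}$. Moreover $\min_\ell F_{n+1}(H_\ell)\le F_{n+1}(H_{\ell_0})$ trivially. With $m=N_n/2$, the only correction needed is a one-step index shift,
\[
F_n(A):=\sup_{t\in A}\sum_{k\ge n-1}2^{k/2}\varepsilon_k(t)\quad(n\ge1),\qquad F_0:=F_1,
\]
for which
\[
F_n\Bigl(\bigcup_\ell H_\ell\Bigr)\ge\sup_{t\in H_{\ell_0}}\Bigl(2^{(n-1)/2}\frac a8+\sum_{k\ge n}2^{k/2}\varepsilon_k(t)\Bigr)=2^{(n-1)/2}\frac a8+F_{n+1}(H_{\ell_0}).
\]
No truncation at scale $a$ is needed. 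With Talagrand's version of the condition, where $m=N_{n+1}$, your original functional works without even the shift. The $k=0$ term is harmless, since $\varepsilon_0(t)\le\Delta(T)\lesssim\sup_tI_\mu(t)$ by your two-point bound.

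Second, the opening reduction to countable $T$ is not justified as stated. The quantity $\inf_\mu\sup_{t\in T}I_\mu(t)$ is not defined through finite subsets, so passing from finite subsets back to $T$ in the lower bound would need a limiting argument. You never actually use the reduction, though. Both the partition-to-measure step and the growth-condition step work verbatim on an arbitrary $T$, so you can simply drop it.
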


We next consider Bernoulli processes.

Let
\[
\mathcal{E}_t=\sum_{i\ge1} t_i \varepsilon_i, \qquad t\in \ell^2,
\]
where $(\varepsilon_i)_{i\ge1}$ are independent Rademacher variables, that is,
\[
\textsf{P}(\varepsilon_i=1)=\textsf{P}(\varepsilon_i=-1)=\frac12.
\]
Then the process \((\mathcal{E}_t)_{t\in T}\), with \(T\subset \ell^2\), is called the Bernoulli process over \(T\).

The Bednorz--Lata{\l}a theorem gives the following decomposition formula. For $T^1,T^2\subset\ell^2$, write

\[
T^1+T^2:=\{t^1+t^2:\ t^1\in T^1,\ t^2\in T^2\}.
\]

\begin{theorem}[Bednorz--Latała \cite{bl}]
\label{latala-bednortz}
   $$\textsf{E}[\sup\limits_{t\in T} \mathcal{E}_t ]\asymp \inf\{\gamma_G(T^2)+\sup\limits_{t^1\in T^1}||t^1||_1:T\subset T^1+T^2\}.
   $$ 
\end{theorem}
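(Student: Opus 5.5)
Since this is the Bednorz--Lata{\l}a theorem quoted from \cite{bl}, the plan below recalls the strategy of that proof rather than a new argument. Write $b(T)=\textsf E\sup_{t\in T}\mathcal E_t$. The two inequalities are of very different difficulty, so I would prove them separately.

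\emph{Upper bound.} Suppose $T\subset T^1+T^2$, and write each $t\in T$ as $t=t^1+t^2$. Then
\[
\sup_{t\in T}\mathcal E_t\le\sup_{t^1\in T^1}\mathcal E_{t^1}+\sup_{t^2\in T^2}\mathcal E_{t^2}.
\]
The first term is at most $\sup_{t^1}\|t^1\|_1$ pointwise, because $|\varepsilon_i|=1$. For the second term, contraction (or Jensen applied to $g_i=\varepsilon_i|g_i|$) gives $\textsf E\sup_{T^2}\mathcal E_t\le\sqrt{\pi/2}\,\textsf E\sup_{T^2}G_t$. Talagrand's theorem $\gamma_G\asymp\textsf E\sup G$, stated above, then bounds this by a multiple of $\gamma_G(T^2)$. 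Taking the infimum over decompositions gives the bound $b(T)\lesssim\inf\{\cdots\}$.

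\emph{Lower bound.} This is the real content. I would build an admissible sequence $(\mathcal A_n)$ with integers $j_n(A)$ adapted to the Bernoulli family $\varphi_j^{\mathrm B}$. The growth condition of Definition~\ref{groth con} would be checked with $F_n(A)$ equal to a conditional Bernoulli supremum, restricted to coordinates not yet frozen. The partition step itself would be Theorem~\ref{m-1}. The required growth estimate combines two ingredients:
\begin{itemize}[label={}]
\item a Bernoulli Sudakov minoration: if points $t_\ell$ satisfy $\varphi^{\mathrm B}_{j+1}(t_\ell,t_{\ell'})>2^{n+1}$ and their increments are bounded in $\ell_\infty$ by roughly $r^{-j}$, then $\textsf E\max_\ell\mathcal E_{t_\ell}\gtrsim 2^{n}r^{-j-1}$;
\item Talagrand's concentration inequality for Bernoulli suprema, which controls the pieces $H_\ell$ contained in the small balls around the $t_\ell$.
\end{itemize}
Together these give $F_n(\bigcup H_\ell)\ge 2^nr^{-j-1}+\min_\ell F_{n+1}(H_\ell)$, up to constants.

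From the scheme I would read off the decomposition along the chain $A_n(t)$. At level $n$, split each increment $\pi_n(t)-\pi_{n-1}(t)$ coordinatewise. Coordinates with $|t_i-\pi_{n-1}(t)_i|\le r^{-j_n}$ go to the Gaussian part $t^2$, and larger coordinates are frozen into the $\ell_1$ part $t^1$. The truncation in $\varphi^{\mathrm B}$, namely the $\wedge1$, is exactly what makes this work:
\begin{itemize}[label={}]
\item on the small coordinates, $\|\cdot\|_2^2\le r^{-2j_n}2^n$, so $\gamma_G(T^2)\lesssim\sum_n2^{n/2}\cdot 2^{n/2}r^{-j_n}$;
\item the frozen coordinates number at most about $2^n$ per step, each of size controlled by $r^{-j_{n}}$, which bounds $\|t^1\|_1$ by the same chaining sum.
\end{itemize}
One must also verify that the frozen part does not destroy the lower bound $b(T)\gtrsim\sum_n2^nr^{-j_n}$. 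This is why $F_n$ is defined on the remaining coordinates.

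\emph{Main obstacle.} I expect the hardest step to be the Bernoulli Sudakov minoration under the $\ell_\infty$ restriction, together with the bookkeeping that keeps the $\ell_\infty$ bound available at every scale. Without that bound, the lower estimate fails: a single large coordinate can carry separation without producing a large supremum. I would first try Talagrand's argument, namely comparison with Gaussian Sudakov on coordinates truncated at level $r^{-j}$. The large coordinates would be handled by freezing them, so that they are charged to the $\ell_1$ part.
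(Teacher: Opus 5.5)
Your upper bound is correct and is the standard argument. You split the supremum over $T^1+T^2$, bound the $T^1$ part pointwise by $\sup_{t^1\in T^1}\|t^1\|_1$, and compare the $T^2$ part with the Gaussian process via $g_i=\varepsilon_i|g_i|$ and Jensen's inequality. The paper gives no proof of this theorem; it quotes it from \cite{bl}. The lower bound is the whole content of the Bernoulli conjecture, and your plan for it fails at its structural step. Theorem~\ref{m-1} cannot be applied to the Bernoulli family, because its hypotheses \eqref{eq:ball-arithmetic} and \eqref{mild condition} are false for $\varphi^{\mathrm B}$. The balls are not convex: for the unit vectors $e_1,e_2$, both $Me_1$ and $Me_2$ lie in $B_0(1)$ for every $M>0$, but $Me_1+Me_2\notin 2B_0(1)$ once $M>\sqrt2$. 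Likewise, a vector with $2^n+1$ coordinates equal to a large $M$ lies in $B_{j+1}(2^{n+\kappa})$ but not in $\frac1{20}B_j(2^n)$. This is why the paper applies only the first assertion of Theorem~\ref{m-2} to Bernoulli processes and leaves the reverse inequality as Conjecture~\ref{conjecture}. For the same reason, Lemma~\ref{lem:ball-calculus}, which converts separation of the $t_\ell$ into separation of the $H_\ell$, is unavailable.

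Even with a partition theorem adapted to non-convex balls, the growth inequality is only asserted, and the tools you list do not give it. Under the hypotheses of Definition~\ref{groth con}, $t_\ell\in B_j(u,2^n)$ allows up to $2^n$ coordinates of $t_\ell-u$ to be arbitrarily large. So there is no $\ell_\infty$ bound to feed into the Bernoulli Sudakov minoration. Similarly, $H_\ell\subset4B_{j+2}(t_\ell,2^{n+\kappa})$ gives no bound on $\sup_{s\in H_\ell}\|s-t_\ell\|_2$, the Lipschitz constant governing Talagrand's concentration inequality. So you cannot show that the suprema over the $H_\ell$ stay near their means. The minoration under an $\ell_\infty$ restriction is a known result of Talagrand; it is not the obstacle.

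Your remedy, a conditional supremum over coordinates ``not yet frozen'', does not define functionals $F_n$ on subsets of $T$. Which coordinates are frozen depends on $t$ and on the chain $A_n(t)$, that is, on the partition the construction is supposed to produce. Different $t_\ell$, or different points of one $H_\ell$, have different large coordinates, so there is no common set of signs to condition on. Truncating through the contraction principle restores an $\ell_\infty$ bound at one scale but changes the process. Truncations at successive scales do not combine into one monotone functional with coefficient exactly $1$ in front of $\min_\ell F_{n+1}(H_\ell)$. A constant lost on that term compounds along the chain, so ``up to constants'' is not acceptable there. Overcoming exactly this is the new idea of \cite{bl}; your proposal identifies the difficulty but does not resolve it, so the lower bound is not proved.

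A smaller point in the read-off: frozen coordinates are those above the threshold, so their size is not controlled by $r^{-j_n}$. Each must be frozen relative to an earlier chain point at which it was still below the previous threshold.
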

For Bernoulli processes, we can define the corresponding family of distances as follows:
\begin{equation}\label{bernoulli: family of distances}
    \varphi_j(s,t)=\sum\limits_{i\ge1}\bigl((r^j(s_i-t_i))^2\wedge1\bigr),\qquad s,t\in\ell^2.
\end{equation}
\begingroup
For this family, we take
\[
 d_0(s,t)=\|s-t\|_2.
\]
If $d_0(s,t)\le r^{-j}$, then
\[
 \varphi_j(s,t)\le r^{2j}\|s-t\|_2^2\le1.
\]
Thus $j_0$ is determined by the Euclidean diameter of $T$. For $n\ge1$, the numbers $j_n^\varphi(A)$ are determined by \eqref{bernoulli: family of distances}.
\endgroup
To emphasize the dependence of $\varphi$ on $\mathcal{E},$ we write 
$\gamma_\mathcal{E}$ instead of $\gamma_\varphi$.
The Bernoulli conjecture was proved by Bednorz and Lata{\l}a
\cite{bl}. We shall use the following partition formulation; see also
Talagrand \cite{talab}.

\begin{theorem}[Bernoulli theorem, partition form]
\label{bernoulli partition}
\[
\textsf{E}\sup_{t\in T}\mathcal E_t
\asymp_r
\gamma_{\mathcal E}(T).
\]
\end{theorem}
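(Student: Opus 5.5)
The plan is to deduce the partition form from the Bednorz--Lata{\l}a decomposition (Theorem~\ref{latala-bednortz}) together with Talagrand's Gaussian theorem. The two directions are handled separately. The comparison is made through the elementary pointwise inequality
\[
\varphi_j^{\mathrm B}(s,t)\le \min\bigl(r^{2j}\|s-t\|_2^2,\ r^{j}\|s-t\|_1\bigr),
\]
which holds because $x^2\wedge1\le x^2$ and $x^2\wedge 1\le |x|$.

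\emph{Upper bound $\textsf E\sup\mathcal E_t\lesssim_r\gamma_{\mathcal E}(T)$.} Fix a partition scheme $\mathcal A$ with $\operatorname{Val}_\varphi(\mathcal A)\le 2\gamma_{\mathcal E}(T)$, and run Talagrand's chaining with truncation.
\begin{enumerate}[label=(\roman*)]
\item For each $A\in\mathcal A_n$ choose a centre $\pi_n(A)$, and write $j_n=j_n^\varphi(A_n(t))$.
\item Split each increment $t-\pi_n(t)$ coordinatewise according to whether $r^{j_n}|t_i-\pi_n(t)_i|\le1$. The small coordinates contribute $\varphi_{j_n}\le 2^n$, hence an $\ell_2$-norm at most $2^{n/2}r^{-j_n}$. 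The large coordinates number at most $2^n$, each with $r^{j_n}|\cdot|>1$.
\item Use the standard decomposition argument (as in Talagrand's book, the Bernoulli ``easy half''), choosing the first $n$ at which a coordinate becomes large. This writes $T\subset T^1+T^2$ with $\sup_{T^1}\|t^1\|_1\lesssim\sup_t\sum_n 2^nr^{-j_n}$.
\item Check that $T^2$ inherits the partition scheme with $\Delta_2(A_n)\lesssim 2^{-n/2}\cdot 2^{n}r^{-j_n}$. This gives $\gamma_G(T^2)\lesssim_r\operatorname{Val}$.
\end{enumerate}
Theorem~\ref{latala-bednortz}, or just the trivial bound $\textsf E\sup\le\gamma_G+\sup\|\cdot\|_1$, then concludes. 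The $r^{-j_0}$ term controls the first link via $d_0=\|\cdot\|_2$.

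\emph{Lower bound $\gamma_{\mathcal E}(T)\lesssim_r\textsf E\sup\mathcal E_t$.} By Theorem~\ref{latala-bednortz}, take $T\subset T^1+T^2$ with $\gamma_G(T^2)+\sup_{T^1}\|t^1\|_1\lesssim b(T)$. The key steps are as follows.
\begin{enumerate}[label=(\roman*)]
\item Take an admissible sequence $\mathcal B$ for $T^2$ with $\sum_n 2^{n/2}\Delta_2(B_n(t))\lesssim\gamma_G(T^2)$, and transport it to $T$ by choosing a decomposition $t=t^1+t^2$ for each $t$.
\item The $\ell_1$ part cannot be controlled by one fixed partition. Instead use that $\varphi_j$ is quasi-subadditive, $\varphi_j(s+s',t+t')\le 2(\varphi_j(s,t)+\varphi_j(s',t'))$ by \eqref{eq:dj<dj+dj} and translation invariance. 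This gives
\[
\varphi_j(t^1,u^1)\le r^j\|t^1-u^1\|_1\le 2r^{j}\sup\|\cdot\|_1 .
\]
\item So with $S=\sup_{T^1}\|t^1\|_1$, choose $j\le j(n)$ where $r^{-j(n)}\approx 2^{-n}S$. Then the $\ell_1$ part costs $\sum_n 2^n r^{-j(n)}$, which does not converge as stated.
\item To remedy this, replace the crude bound by the sharper one: a point $t^1$ with $\|t^1\|_1\le S$ has at most $2^n$ coordinates exceeding $2^{-n}S$. Partition the $T^1$-coordinates by $\ell_1$-mass scales as in Talagrand's proof of the Bernoulli lower bound.
\end{enumerate}

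I expect this last step to be the main obstacle. In practice I would invoke the known partition statement from \cite{talab}, of which this theorem is a restatement with the Bernoulli family: that $\gamma$-type functionals built from $x^2\wedge1$ truncations are equivalent to $b(T)$ for $r$ fixed. The work would then reduce to matching the scaling $r^{j}$ against Talagrand's $2^{j}$ convention, which costs only constants depending on $r$.
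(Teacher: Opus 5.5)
The paper gives no proof of this statement; it quotes it from \cite{bl} and \cite{talab}. Your fallback of invoking the partition form in \cite{talab} is therefore exactly the paper's treatment. It is also the only part of your proposal that actually delivers $\gamma_{\mathcal E}(T)\lesssim_r b(T)$.

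\textbf{The gap in the lower bound.} As a derivation from Theorem~\ref{latala-bednortz}, your sketch has a genuine gap exactly where you flag it. Knowing that each $t^1$ has at most $2^n$ coordinates above $2^{-n}S$ does not produce an admissible sequence. Which coordinates these are varies from point to point, and this cannot be recorded by $N_n$ cells.

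The missing observation is that no support information is needed.
\begin{enumerate}[label=(\alph*)]
\item By \eqref{eq:dj<dj+dj}, $\varphi_j(u,u')\le2\bigl(\varphi_j(u,0)+\varphi_j(0,u')\bigr)$. So it suffices to partition $T^1$ according to the profile $k_n(u)=\sup\{j:\varphi_j(u,0)\le2^{n-2}\}$.
\item Since $\varphi_j(u,0)\le r^jS$, one has $k_n(u)\ge j_n^*$, where $j_n^*$ is the largest integer with $r^{j_n^*}S\le2^{n-2}$. Capping $k_n$ at $j_n^*+n$ leaves at most $(n+1)!\le N_n$ nested cells. It costs at most $4rSr^{-n}$ at level $n$.
\item A decreasing-rearrangement computation gives $\sum_n2^nr^{-k_n(u)}\lesssim_r\|u\|_1$.
\item For $T^2$ nothing more is needed: $\varphi_j\le r^{2j}\|\cdot\|_2^2$ gives $\gamma_{\mathcal E}(T^2)\le\gamma_G(T^2)$ directly.
\end{enumerate}
Product partitions shifted by two levels, your quasi-subadditivity, and $r^{-j_0}<r\operatorname{diam}_{d_0}(T)\lesssim r\,b(T)$ then close the lower bound.

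\textbf{The soft spot in the upper bound.} Steps (iii)--(iv), which you also delegate to \cite{talab}, hide the real difficulty. It depends on how you read ``the first $n$ at which coordinate $i$ becomes large.''
\begin{enumerate}[label=(\alph*)]
\item If it is read off from $t$ itself, via $|t_i-\pi_n(t)_i|>r^{-j_n(t)}$, then (iii) is right. At most $2^m$ coordinates stop at level $m$, each leaving a remainder at most $r^{-j_{m-1}(t)}$. But the stopped increments are then not functions of $A_n(t)$, so $T^2$ does not inherit the partition and (iv) fails.
\item If it is read off from the increments $x_n(t)=\pi_n(t)-\pi_{n-1}(t)$, then $T^2$ is handled by ordinary chaining. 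But $\varphi_j$ is truncated at $1$, so large increments carry no size bound and (iii) fails.
\end{enumerate}

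The fix combines the two rules.
\begin{enumerate}[label=(\alph*)]
\item Put $J_n(t)=\{i:|x_n(t)_i|\le2r^{-j_{n-1}(t)}\}$, which depends only on $A_n(t)$, and set $t^2=\pi_0(t)+\sum_nx_n(t)\mathbf{1}_{J_n(t)}$.
\item Estimate $t-t^2$ in $\ell_1$ by comparison with $I_n(t)=\{i:|t_i-\pi_k(t)_i|\le r^{-j_k(t)}\text{ for }k\le n\}$.
\item These sets satisfy $I_n(t)\subset J_n(t)$ and $|I_n(t)^c|<2^{n+1}$. This gives $\|t-t^2\|_1\lesssim\sum_n2^nr^{-j_{n-1}(t)}$.
\end{enumerate}
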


Theorem~\ref{bernoulli partition} will be used in Section~\ref{5}; Section~\ref{6} proves the reverse inequality for bounded convex
unconditional Bernoulli index sets.

\begingroup
We now define the measure functional associated with $(d_0,\varphi)$.

Fix a non-empty set $T\subset E$, and let $d_0$ and $(\varphi_j)_{j\in\mathbb Z}$ satisfy \eqref{eq:initial-distance-compatibility}. Let $\mu$ be a probability measure on $T$. For $n\ge1$, define
\begin{equation}\label{definition of jn}
k_n^\mu(t)
:=
\sup\{j\in\mathbb Z:\mu(B_j(t,2^n))\ge N_n^{-1}\},
\end{equation}
where
\[
\sup\varnothing=-\infty.
\] Since $\mu$ is supported on $T$,
we use $B_j(t,2^n)\cap T$ implicitly in this definition.
We set
\[
 I_{\mu,\varphi}(t):=r^{-j_0}+\sum_{n\ge1}2^nr^{-k_n^\mu(t)}
\]
and
\[
 \mathcal M_\varphi(T):=\inf_\mu\sup_{t\in T}I_{\mu,\varphi}(t),
\]
where the infimum is taken over all probability measures on $T$ with countable support.

The first term is determined by $d_0$ and the remaining terms by $(\varphi_j)$.
\endgroup 

We shall use the following comparison between the partition and measure
functionals.

\begin{proposition}
\label{equivalence}
Let $T\subset E$ be non-empty. Let $d_0$ and $(\varphi_j)_{j\in\mathbb Z}$ satisfy \eqref{eq:initial-distance-compatibility}. Assume that $(\varphi_j)$ satisfies \eqref{eq:dj(s,t)=dj(t,s)} and \eqref{eq:dj<dj+dj}. Then
\[
\gamma_\varphi(T)\asymp_r\mathcal M_\varphi(T).
\]
\end{proposition}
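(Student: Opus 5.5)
We prove the two inequalities separately, following the classical comparison between majorizing measures and admissible sequences (cf.\ \cite{talab}). The only properties of $(\varphi_j)$ we use are symmetry, monotonicity in $j$, and the quasi-triangle inequality \eqref{eq:dj<dj+dj}. The main nuisance is that \eqref{eq:dj<dj+dj} carries a factor $2$. We will absorb it by shifting the level index $n$ by one, that is, by comparing balls of radius $2^{n}$ with balls of radius $2^{n+1}$ or $2^{n-1}$. The first term $r^{-j_0}$ is common to both functionals and needs no work.

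\emph{Step 1: $\mathcal M_\varphi(T)\lesssim_r\gamma_\varphi(T)$.} Fix a partition scheme $(\mathcal A_n)$ whose value is within a factor $2$ of $\gamma_\varphi(T)$. For each $n$ and each $A\in\mathcal A_n$ pick a point $t_A\in A$. Define
\[
\mu=\sum_{n\ge0}2^{-n-1}N_n^{-1}\sum_{A\in\mathcal A_n}\delta_{t_A},
\]
normalized to be a probability measure with countable support. Since $|\mathcal A_n|\le N_n$, each $t_{A_n(t)}$ has mass at least $2^{-n-1}N_n^{-2}\ge N_{n+2}^{-1}$ for $n\ge1$. If $j=j_n^\varphi(A_n(t))$, then $\varphi_j(t,t_{A_n(t)})\le 2^n\le 2^{n+2}$. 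Hence $\mu(B_j(t,2^{n+2}))\ge N_{n+2}^{-1}$, which gives $k^\mu_{n+2}(t)\ge j_n^\varphi(A_n(t))$. Summing,
\[
\sum_{n\ge3}2^nr^{-k^\mu_n(t)}\le 4\sum_{n\ge1}2^nr^{-j_n^\varphi(A_n(t))}.
\]
For $n=1,2$ we use $k_n^\mu(t)\ge j_0$: the whole $T$ has measure $1$, and \eqref{eq:initial-distance-compatibility} gives $\varphi_{j_0}(s,t)\le1$ for all $s,t\in T$. This bounds those two terms by $6r^{-j_0}$. (The case $j_0=-\infty$ is trivial.)

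\emph{Step 2: $\gamma_\varphi(T)\lesssim_r\mathcal M_\varphi(T)$.} Fix $\mu$ with $\sup_tI_{\mu,\varphi}(t)\le2\mathcal M_\varphi(T)$ and construct $(\mathcal A_n)$ by recursive greedy refinement. Write $k_n(t)=k_n^\mu(t)$ and, to obtain monotone indices, replace it by $\tilde k_n(t)=\min_{m\le n}k_m(t)$ together with $j_0$. Since each $2^mr^{-k_m}$ term already appears in the sum, this does not increase the value by more than a factor depending on $r$.

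At stage $n$, given $C\in\mathcal A_{n-1}$, we repeatedly choose $s\in C$ maximizing $\tilde k_{n}$ (up to ties; this is possible since $\tilde k_n$ takes integer values bounded above where finite). We remove the set
\[
\{t\in C\ \text{remaining}:\ \varphi_{\tilde k_n(s)}(s,t)\le 2^{n}\ \text{and}\ \tilde k_n(t)\le \tilde k_n(s)\},
\]
or a variant with slightly larger radius. The selected centers $s_1,s_2,\dots$ give disjoint balls $B_{\tilde k_n(s_i)}(s_i,2^{n-2})$ by the quasi-triangle inequality and the maximality choice. Each such ball has $\mu$-mass at least $N_{n-2}^{-1}$, so at most $N_{n-2}$ centers are selected. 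Thus $|\mathcal A_n|\le N_{n-1}N_{n-2}\le N_n$, with an index shift handled as in Step 1. Within a piece, any two points are within $\varphi_{\tilde k_n(s)}$-distance $4\cdot 2^{n}$ through $s$. Hence $j^\varphi_{n+2}(A)\ge \tilde k_n(s)\ge\tilde k_n(t)$, and by reindexing the scheme ($\mathcal A'_{n}=\mathcal A_{n-2}$) the sum is controlled by $\sum 2^n r^{-k_n(t)}$ plus $r^{-j_0}$.

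\emph{Main obstacle.} The delicate point is the counting in Step 2, namely proving the disjointness of the selected balls. With only a quasi-triangle inequality and centers at different levels $\tilde k_n(s_i)$, disjointness must use the monotonicity $\varphi_j\le\varphi_{j+1}$. The key case is a point $u$ in the balls of $s_i$ and $s_{i'}$ with $\tilde k_n(s_i)\ge\tilde k_n(s_{i'})$. Comparing at the smaller level puts $s_i$ within distance $2^{n}$ of $s_{i'}$ at level $\tilde k_n(s_{i'})$, so $s_i$ would have been removed earlier or would have removed $s_{i'}$. I would first try ordering selections by decreasing $\tilde k_n$, which makes this argument work. I expect extra care where $\tilde k_n=\pm\infty$: isolated atoms and pieces with infinite value. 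These are handled by the conventions $r^{-\infty}=0$ and by noting that if the sum is finite then $k_n(t)>-\infty$ for all $n$.
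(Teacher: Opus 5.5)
Your Step~1 is the paper's argument. The paper puts mass $1/(2^nN_n)$ on one point of each $A\in\mathcal A_n$ and gets $k^\mu_{n+1}(t)\ge j^\varphi_n(A_n(t))$; your shift by two works equally well. For the reverse inequality the paper does not build partitions by hand; it cites Talagrand's measure-to-partition theorem (Theorem~11.6.3 of his book). Your Step~2, which tries to do this directly, has two genuine problems.

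\emph{The regularization collapses.} $k^\mu_n(t)$ is already non-decreasing in $n$, because the radius $2^n$ grows while the threshold $N_n^{-1}$ shrinks. Hence $\tilde k_n=\min_{m\le n}k^\mu_m=k^\mu_1$ for every $n$, and $\sum_n2^nr^{-\tilde k_n(t)}$ diverges whenever $k^\mu_1(t)<\infty$. (You presumably meant $\min_{m\le n}(k^\mu_m+n-m)$; see below.)

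\emph{The counting fails; this is the main gap.} If $s_i$ is selected before $s_{i'}$, you only know $\varphi_{\tilde k_n(s_i)}(s_i,s_{i'})>2^n$ at the larger index. A common point of the two small balls gives $\varphi_{\tilde k_n(s_{i'})}(s_i,s_{i'})\le2^n$ at the smaller index. Since $\varphi_j$ increases with $j$, these two facts are compatible. Moreover $s_{i'}$ cannot remove $s_i$, which is already gone. So the resolution in your ``main obstacle'' paragraph is wrong, and the number of pieces is unbounded.

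For a concrete example, take $\mathbb R$ with $\varphi_j(s,t)=r^{2j}|s-t|^2$ and $T=\{0,x_1,\dots,x_K\}$, where $x_k=r^{2k}$. Let $\mu$ give mass $1-\delta$ to $0$ and $\delta/K$ to each $x_k$, with $\delta<N_{n-2}^{-1}$ and $n\ge3$. Use levels $k^\mu_{n-2}$, which is what your mass bound needs. Then:
\begin{itemize}
\item the level of $x_k$ is the largest $j$ for which $B_j(x_k,2^{n-2})$ reaches $0$;
\item these levels decrease strictly in $k$, and every small ball contains $0$;
\item the set removed by $x_k$ lies in $\{t<(2r+1)x_k\}$, so it misses $x_{k+1}$.
\end{itemize}
Your procedure therefore cuts $T$ into $K+1$ pieces for arbitrary $K$. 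In addition, for infinite $T$ the maximum of the level need not be attained.

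The missing idea is to pack only centers of a \emph{common} level. Put $\ell_m=j_0$ for $m\le2$ and $\ell_m=k^\mu_{m-2}$ for $m\ge3$, and set $\tilde j_n(t)=\min_{0\le m\le n}\bigl(\ell_m(t)+n-m\bigr)$. Then:
\begin{itemize}
\item $\tilde j_n\le\ell_n$ and $\tilde j_{n-1}\le\tilde j_n\le\tilde j_{n-1}+1$;
\item since $r\ge4$, $\sum_n2^nr^{-\tilde j_n}\le2\sum_m2^mr^{-\ell_m}\lesssim I_{\mu,\varphi}(t)$.
\end{itemize}
Keep $\tilde j_n$ constant on the cells of $\mathcal A_n$. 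Split a cell $C\in\mathcal A_{n-1}$ with common value $q$ into two groups. The points where $\tilde j_n=q$ stay as one piece. The points where $\tilde j_n=q+1$ are covered greedily by balls $B_{q+1}(s,2^n)$.

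Now every center has the same level $q+1\le k^\mu_{n-2}(s)$. Your quasi-triangle argument then does give disjoint balls $B_{q+1}(s,2^{n-2})$, each of $\mu$-mass at least $N_{n-2}^{-1}$. So $C$ splits into at most $1+N_{n-2}$ pieces, and $|\mathcal A_n|\le N_{n-1}(1+N_{n-2})\le N_n$. Your diameter bound and the reindexing $\mathcal B_{n+2}=\mathcal A_n$ then finish the argument. This is the same stay-or-increment splitting as in the proof of Theorem~\ref{m-1}, and it uses only the assumptions of the proposition.
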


\begin{proof}
If $j_0=-\infty$, then
\[
r^{-j_0}=\infty,
\]
and hence
\[
\gamma_\varphi(T)=\mathcal M_\varphi(T)=\infty.
\]
If $j_0=\infty$, then
\eqref{eq:initial-distance-compatibility} gives
$\varphi_j(s,t)\le1$ for every finite $j$ and every $s,t\in T$, so both
functionals vanish. Thus we may assume that $j_0\in\mathbb Z$.

We first prove
$\mathcal M_\varphi(T)\lesssim_r\gamma_\varphi(T)$. Fix $\varepsilon>0$ and
choose a partition scheme $(\mathcal A_n)$ such that
\[
 r^{-j_0}
 +\sup_{t\in T}\sum_{n\ge1}2^nr^{-j_n^\varphi(A_n(t))}
 \le\gamma_\varphi(T)+\varepsilon.
\]
If the right-hand side is infinite, there is nothing to prove. For every
$n\ge1$ and $A\in\mathcal A_n$, choose $t_{n,A}\in A$ and assign mass
$1/(2^nN_n)$ to this point. The total assigned mass is at most
\[
 \sum_{n\ge1}\frac{|\mathcal A_n|}{2^nN_n}
 \le\sum_{n\ge1}2^{-n}=1.
\]
Put any remaining mass at one fixed point of $T$ and call the resulting
probability measure $\mu$.

Fix $t\in T$ and $n\ge1$. Since the chosen partition has finite value whenever
the right-hand side above is finite, $j_n^\varphi(A_n(t))>-\infty$. If this
index is finite, then
\[
 \mu\bigl(B_{j_n^\varphi(A_n(t))}(t,2^n)\bigr)
 \ge\mu(A_n(t))
 \ge\frac1{2^nN_n}
 \ge\frac1{N_{n+1}},
\]
and hence
\[
 k_{n+1}^\mu(t)\ge j_n^\varphi(A_n(t)).
\]
If $j_n^\varphi(A_n(t))=\infty$, the same conclusion holds with every finite
$j$ in place of $j_n^\varphi(A_n(t))$, and therefore
$k_{n+1}^\mu(t)=\infty$.

By \eqref{eq:initial-distance-compatibility},
$B_{j_0}(t,2)\supset T$, so $k_1^\mu(t)\ge j_0$. Consequently,
\begin{align*}
 I_{\mu,\varphi}(t)
 &\le3r^{-j_0}
 +\sum_{n\ge2}2^nr^{-j_{n-1}^\varphi(A_{n-1}(t))}\\
 &\le3r^{-j_0}
 +2\sum_{n\ge1}2^nr^{-j_n^\varphi(A_n(t))}.
\end{align*}
Taking the supremum over $t$ and then letting $\varepsilon\downarrow0$ proves
one inequality.

The reverse inequality follows from Theorem~11.6.3 of \cite{talab},
with the initial index $j_0$ fixed by $d_0$. The term corresponding to
$n=0$ is $r^{-j_0}$, while the remaining terms are those in
\eqref{eq:gamma-functional}.
\end{proof}

\section{Partition schemes and the proof of Theorem~\ref{m-1}}\label{3}

We begin with the ball relations used in this section and in the finite-set
construction.

\begin{lemma}\label{lem:ball-calculus}
Assume \eqref{eq:ball-arithmetic} and \eqref{mild condition}. Then the
following assertions hold.
\begin{enumerate}[label=(\roman*)]
\item If $s,t\in B_j(u,2^n)$, then
\[
4B_{j+2}(s,2^{n+\kappa})\subset 4B_j(t,2^n).
\]
\item If
\[
\varphi_{j+1}(s,t)>2^{n+1},
\]
then
\[
\varphi_{j+2}\bigl(4B_{j+2}(s,2^{n+\kappa}),
4B_{j+2}(t,2^{n+\kappa})\bigr)>2^n.
\]
\item If $d\ge0$ and $0\le m-n\le\kappa d$, then
\[
4B_{j+d}(t,2^m)\subset 4B_j(t,2^n).
\]
\item If
\[
x\notin 9B_{j+2}(y,2^{n+\kappa}),
\]
then
\[
\varphi_{j+2}\bigl(4B_{j+2}(x,2^{n+\kappa}),
4B_{j+2}(y,2^{n+\kappa})\bigr)>2^n.
\]
\item If
\[
\varphi_{j+1}(t,t')>2^{n+1},
\]
then every set of the form $9B_{j+2}(x,2^{n+\kappa})$ contains at most one
of the two points $t,t'$.
\item If $x\in 2B_j(u,2^n)$, then
\[
4B_{j+2}(x,2^{n+\kappa})\subset 4B_j(u,2^n).
\]
\end{enumerate}
\end{lemma}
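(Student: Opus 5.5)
All six assertions reduce to inclusions between dilates of centred balls, and I will prove them that way. By translation invariance, $B_j(t,c)=t+B_j(c)$ and $aB_j(t,c)=t+aB_j(c)$, where $aB_j(c)=\{as:s\in B_j(c)\}$. I will use the standing properties \eqref{eq:dj(s,t)=dj(t,s)} throughout, and I read $\varphi_{j}(X,Y)>c$ for sets $X,Y$ as $\varphi_j(x,y)>c$ for all $x\in X$, $y\in Y$. First I collect the elementary facts.

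\begin{enumerate}[label=(\alph*)]
\item $B_j(c)=-B_j(c)$. This follows from symmetry and translation invariance: $\varphi_j(0,-s)=\varphi_j(s,0)=\varphi_j(0,s)$.
\item $0\in B_j(c)$, so for $0\le a\le b$ we get $aB_j(c)=aB_j(c)+0\subset aB_j(c)+(b-a)B_j(c)\subset bB_j(c)$ by \eqref{eq:ball-arithmetic}.
\item Monotonicity: $B_{j+1}(c)\subset B_j(c)$ because $\varphi_{j+1}\ge\varphi_j$, and $B_j(c)\subset B_j(c')$ for $c\le c'$.
\item Scaling is associative on sets: $a(bB)=(ab)B$.
\end{enumerate}

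Combining (c) with \eqref{mild condition} gives the workhorse inclusions
\[
B_{j+2}(2^{n+\kappa})\subset B_{j+1}(2^{n+\kappa})\subset\tfrac1{20}B_j(2^n),
\qquad
B_{j+2}(2^{n+\kappa})\subset B_{j+2}(2^{n+1+\kappa})\subset\tfrac1{20}B_{j+1}(2^{n+1}).
\]
The second one is \eqref{mild condition} applied with the indices $(j+1,n+1)$.

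For (i), $s,t\in u+B_j(2^n)$ together with (a) and \eqref{eq:ball-arithmetic} gives $s-t\in 2B_j(2^n)$. Hence
\[
s+4B_{j+2}(2^{n+\kappa})\subset t+2B_j(2^n)+\tfrac4{20}B_j(2^n)\subset t+\tfrac{11}5B_j(2^n)\subset t+4B_j(2^n),
\]
where the last step uses (b). Assertion (vi) is identical, with $x-u\in2B_j(2^n)$ as the starting point.

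For (iii) I argue by induction on $d$. The case $d=0$ forces $m=n$ and is trivial. For $d\ge1$ I iterate \eqref{mild condition}:
\[
B_{j+d}(2^m)\subset B_{j+d}(2^{n+\kappa d})\subset\tfrac1{20}B_{j+d-1}(2^{n+\kappa(d-1)})\subset\cdots\subset 20^{-d}B_j(2^n)\subset B_j(2^n).
\]
Multiplying by $4$ and translating by $t$ gives the claim.

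Assertions (ii), (iv) and (v) are proved by contradiction via the same decomposition.
\begin{itemize}
\item For (ii), suppose $x\in s+4B_{j+2}(2^{n+\kappa})$ and $y\in t+4B_{j+2}(2^{n+\kappa})$ with $\varphi_{j+2}(x,y)\le2^n$. Then $y-x\in B_{j+2}(2^n)\subset B_{j+2}(2^{n+1+\kappa})$. Writing $t-s=(t-y)+(y-x)+(x-s)$ and using (a), I get $t-s\in 9B_{j+2}(2^{n+1+\kappa})\subset\frac9{20}B_{j+1}(2^{n+1})\subset B_{j+1}(2^{n+1})$. This contradicts $\varphi_{j+1}(s,t)>2^{n+1}$.
\item For (iv) the same splitting gives $x-y\in(4+1+4)B_{j+2}(2^{n+\kappa})$, using $B_{j+2}(2^n)\subset B_{j+2}(2^{n+\kappa})$. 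Thus $x\in9B_{j+2}(y,2^{n+\kappa})$, a contradiction.
\item For (v), if $t,t'\in x+9B_{j+2}(2^{n+\kappa})$, then $t-t'\in18B_{j+2}(2^{n+\kappa})\subset\frac{18}{20}B_{j+1}(2^{n+1})\subset B_{j+1}(2^{n+1})$. This again contradicts the separation hypothesis.
\end{itemize}

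I expect no deep obstacle here. The only delicate points are bookkeeping: using symmetry (a) to handle differences $s-t$, and using (b) to enlarge dilation factors. In (ii) and (v) one must also apply \eqref{mild condition} at the shifted indices $(j+1,n+1)$ rather than $(j,n)$, so that the constant $\frac{18}{20}$ (respectively $\frac9{20}$) stays below $1$.
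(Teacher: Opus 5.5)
Your proof is correct and is essentially the paper's argument: symmetry of the balls plus \eqref{eq:ball-arithmetic} to control differences, \eqref{mild condition} combined with monotonicity in $j$ and in the radius to absorb the $B_{j+2}$-balls, and a $d$-fold iteration for (iii). The only cosmetic difference is that in (ii) and (v) you invoke \eqref{mild condition} at $(j+1,n+1)$, whereas the paper uses $(j+1,n)$; the shift that actually matters is the one in $j$, not the one that keeps the constant below $1$. One small point applies equally to the paper's proof: $\varphi_j(B,B')$ is defined in Section~\ref{4} as an infimum, so your pointwise reading is a priori weaker, but your contradiction argument in (ii) and (iv) goes through verbatim under the weaker assumption $\varphi_{j+2}(x,y)\le 2^{n+\kappa}$, so every pair exceeds $2^{n+\kappa}$ and the infimum is strictly larger than $2^n$, as is needed there.
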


\begin{proof}
The balls $B_j(c)$ are symmetric. Since $0\in B_j(c)$,
\eqref{eq:ball-arithmetic} also gives
\begin{equation}\label{eq:radial-stability-derived}
0\le\lambda\le1
\quad\Longrightarrow\quad
\lambda B_j(c)\subset B_j(c).
\end{equation}
Applying \eqref{mild condition} with $j+1$ in place of $j$, we obtain
\begin{equation}\label{eq:fine-ball-contraction}
4B_{j+2}(2^{n+\kappa})
\subset \frac15 B_{j+1}(2^n)
\subset \frac15 B_j(2^n).
\end{equation}
If $s,t\in B_j(u,2^n)$, then
$s-t\in2B_j(2^n)$. Hence
\[
4B_{j+2}(s,2^{n+\kappa})-t
\subset 2B_j(2^n)+\frac15B_j(2^n)
\subset4B_j(2^n),
\]
which proves (i).

For (ii), suppose that
$x\in4B_{j+2}(s,2^{n+\kappa})$ and
$y\in4B_{j+2}(t,2^{n+\kappa})$ satisfy
$\varphi_{j+2}(x,y)\le2^n$. Then
$x-y\in B_{j+2}(2^n)\subset B_{j+2}(2^{n+\kappa})$, and therefore
\[
s-t=(s-x)+(x-y)+(y-t)
\in9B_{j+2}(2^{n+\kappa}).
\]
By \eqref{mild condition},
\[
9B_{j+2}(2^{n+\kappa})
\subset\frac9{20}B_{j+1}(2^n)
\subset B_{j+1}(2^n),
\]
contrary to the assumption on $s,t$.

For (iii), monotonicity in the second argument and $m-n\le\kappa d$ give
\[
B_{j+d}(2^m)
\subset B_{j+d}(2^{n+\kappa d}).
\]
Iterating \eqref{mild condition} $d$ times gives
\[
B_{j+d}(2^{n+\kappa d})
\subset20^{-d}B_j(2^n)
\subset B_j(2^n).
\]
Multiplying the inclusion by $4$ proves (iii).

For (iv), suppose, on the contrary, that there exist
$z,z'$ in the two indicated sets such that
\[
z-z'\in B_{j+2}(2^n).
\] Hence
\[
x-y=(x-z)+(z-z')+(z'-y)
\in9B_{j+2}(2^{n+\kappa}),
\]
a contradiction.

If $t,t'$ belonged to the same set
$9B_{j+2}(x,2^{n+\kappa})$, then
\[
t-t'\in18B_{j+2}(2^{n+\kappa})
\subset\frac{18}{20}B_{j+1}(2^n)
\subset B_{j+1}(2^n),
\]
which proves (v). Finally, (vi) follows from
\eqref{eq:fine-ball-contraction} in the same way as (i):
\[
4B_{j+2}(x,2^{n+\kappa})-u
\subset2B_j(2^n)+\frac15B_j(2^n)
\subset4B_j(2^n).
\]
\end{proof}

\subsection{The growth condition}

Assume the hypotheses of Theorem~\ref{m-1}. Set
\[
j_0(t)=j_1(t)=j_0,
\qquad t\in T.
\]
Suppose that $j_k(t)$ has been defined for $k\le n$, where $n\ge1$. Put
\begin{align*}
M_n(t):=\Bigl\{m\ge n:\;&
F_n\bigl(4B_{j_n(t)}(t,2^n)\cap T\bigr)\\
&\ge 2^{m-1}r^{-j_n(t)-1}
+F_{n+1}\bigl(4B_{j_n(t)+2}(t,2^{m+\kappa})\cap T\bigr)
\Bigr\}.
\end{align*}
Define
\[
j_{n+1}(t)=
\begin{cases}
j_n(t)+1,&n\notin M_n(t),\\
j_n(t),&n\in M_n(t).
\end{cases}
\]
Then $0\le j_{n+1}(t)-j_n(t)\le1$. If $j_{n+1}(t)=j_n(t)$, then
\begin{equation}\label{eq:recursive-payment}
\begin{aligned}
&2^{n-1}r^{-j_n(t)-1}
+F_{n+1}\bigl(4B_{j_n(t)+2}(t,2^{n+\kappa})\cap T\bigr)\\
&\hspace{35mm}\le
F_n\bigl(4B_{j_n(t)}(t,2^n)\cap T\bigr).
\end{aligned}
\end{equation}

We shall use Talagrand's  lemma.

\begin{lemma}[Talagrand {\cite[Lemma 2.9.5]{talab}}]\label{tala lem}
Let $(a_k)$ be a finite or infinite bounded sequence of positive numbers and
let $\alpha>1$. Put
\[
I:=\{k:a_\ell<a_k\alpha^{|k-\ell|}\text{ for every }\ell\ne k\}.
\]
Then $I\ne\emptyset$ and
\[
\sum_k a_k\le\frac{2\alpha}{\alpha-1}\sum_{k\in I}a_k.
\]
\end{lemma}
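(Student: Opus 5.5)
We prove Lemma~\ref{tala lem} by selecting the ``dominant'' indices greedily and bounding each $a_\ell$ by a geometric sequence centered at a dominant index. Put $\beta:=\alpha^{-1}<1$.

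\emph{Step 1: $I\neq\emptyset$.} In the finite case we take $k$ maximizing $a_k\beta^{0}$ among the maximizers of $a_k$, choosing, say, the smallest index with $a_k=\max a$. This is not quite enough, since ties $a_\ell=a_k$ with $\ell\ne k$ still satisfy $a_\ell<a_k\alpha^{|k-\ell|}$ because $\alpha^{|k-\ell|}>1$. So any maximizer works. In the infinite case $a$ is bounded but the supremum may not be attained. Here we first show $\sum_k a_k$ may be assumed finite (otherwise there is nothing to prove if $I$ is shown to be infinite, or we reduce by truncation). The cleaner route is to prove the inequality for every finite truncation $(a_k)_{|k|\le K}$ and pass to the limit; but $I$ depends on the full sequence. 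So instead we argue directly with a domination relation, as follows.

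\emph{Step 2: domination chain.} Say $\ell\to k$ if $a_\ell\le a_k\alpha^{-|k-\ell|}\cdot$, i.e.\ $a_\ell\,\alpha^{|k-\ell|}\cdot\alpha^{0}\ge$\dots; precisely, for each $\ell\notin I$ there is $k\ne\ell$ with $a_k\le a_\ell\beta^{|k-\ell|}$ failing. By definition $\ell\notin I$ means there exists $m\ne\ell$ with $a_m\ge a_\ell\alpha^{|\ell-m|}$, so $a_m\ge\alpha a_\ell$. Starting from any $\ell$ we iterate $\ell=m_0,m_1,m_2,\dots$ with $a_{m_{i+1}}\ge a_{m_i}\alpha^{|m_i-m_{i+1}|}$. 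Since the values grow by a factor at least $\alpha$ at each step and $a$ is bounded, the chain stops after finitely many steps at some $k\in I$. This gives $I\ne\emptyset$ and, multiplying along the chain and using the triangle inequality $|\ell-k|\le\sum|m_i-m_{i+1}|$, yields $a_\ell\le a_k\beta^{|\ell-k|}$.

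\emph{Step 3: summation.} Assign each $\ell$ to one such $k=k(\ell)\in I$ (with $k(k)=k$). Then
\[
\sum_\ell a_\ell\le\sum_{k\in I}a_k\sum_{\ell}\beta^{|\ell-k|}\le\sum_{k\in I}a_k\Bigl(1+\frac{2\beta}{1-\beta}\Bigr)=\frac{\alpha+1}{\alpha-1}\sum_{k\in I}a_k,
\]
and $\frac{\alpha+1}{\alpha-1}\le\frac{2\alpha}{\alpha-1}$ since $\alpha>1$.

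The only delicate point is the termination of the chain in Step~2 for infinite sequences. Positivity of $a_\ell$ and boundedness give $a_{m_i}\ge\alpha^i a_\ell\le\sup a$, so $i\le\log_\alpha(\sup a/a_\ell)$, which settles it.
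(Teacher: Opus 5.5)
Your argument is correct, and since the paper only cites Talagrand's book for this lemma without reproducing a proof, it is essentially the standard argument for the cited result: every index $\ell$ is linked by a finite chain to some $k\in I$ with $a_\ell\le a_k\alpha^{-|\ell-k|}$, the chain being finite because the values grow by a factor $\alpha$ at each step while staying below $\sup_k a_k$, and summing the two-sided geometric series then gives the constant $\frac{\alpha+1}{\alpha-1}\le\frac{2\alpha}{\alpha-1}$. Step~1 and the garbled definition of $\ell\to k$ at the start of Step~2 are unnecessary and should be deleted, since the chain argument already shows $I\neq\emptyset$.
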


\begin{proposition}\label{inequality-global}
We have
\[
\sup_{t\in T}\sum_{n\ge0}2^nr^{-j_n(t)}
\lesssim_r F_0(T)+r^{-j_0}.
\]
\end{proposition}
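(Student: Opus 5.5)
Fix $t\in T$ and write $j_n=j_n(t)$, $a_n:=2^nr^{-j_n}$. The plan is Talagrand's argument from \cite[Section 2.9]{talab}, adapted to the two-parameter recursion. The sequence $(a_n)$ is bounded: since $j_{n+1}\ge j_n$, we get $a_{n+1}\le 2a_n$, while each increase of $j$ gives $a_{n+1}=(2/r)a_n$ with $r=2^{\kappa-3}\ge4$. If $(a_n)$ is not summable it is still controlled, because it can only grow along stretches where $j$ stays constant, and each such stretch will be paid for below. It therefore suffices to bound sums over finite ranges.

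We apply Lemma~\ref{tala lem} with $\alpha=2$ (or $\alpha=\sqrt2$). This reduces the task to bounding $\sum_{n\in I}a_n$, where $I$ is the set of indices with $a_\ell<a_n\alpha^{|n-\ell|}$ for all $\ell\neq n$. For $n\in I$ we first have $a_{n+1}<2a_n$, which forces $j_{n+1}=j_n+1$ is impossible; indeed $j_{n+1}=j_n$ would give $a_{n+1}=2a_n$. So $n\in I$ implies $n\in M_n(t)$, and \eqref{eq:recursive-payment} holds at $n$.

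The central step is a telescoping chain along $I$. Enumerate $I$ as $n_1<n_2<\cdots$. For consecutive $n=n_k$, $n'=n_{k+1}$ we want
\[
2^{n-1}r^{-j_n-1}+F_{n'}\bigl(4B_{j_{n'}}(t,2^{n'})\cap T\bigr)\le F_n\bigl(4B_{j_n}(t,2^n)\cap T\bigr).
\]
To get this from \eqref{eq:recursive-payment}, we need the set inclusion
\[
4B_{j_{n'}}(t,2^{n'})\subset 4B_{j_n+2}(t,2^{n+\kappa})
\]
together with the monotonicity $F_{n'}\le F_{n+1}$. Lemma~\ref{lem:ball-calculus}(iii) gives the inclusion provided $j_{n'}\ge j_n+2$ and $n'-n$ is small enough: we need $0\le n'-n-\kappa\le\kappa(j_{n'}-j_n-2)$, or the variant where $m$ ranges over $M_n(t)$. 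This is exactly where the freedom $m\ge n$ in the definition of $M_n(t)$ enters. Instead of $m=n$ we use the largest $m$ compatible with $j$ being constant; the defining inequality of $M_n(t)$ then matches the scale $n'$.

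The main obstacle is choosing that $m$ and checking the index bookkeeping. The maximality property of $I$ has to show that between $n$ and $n'$ the index $j$ increases by at least $2$ while $n'-n$ stays within $\kappa(j_{n'}-j_n)$. Here $r=2^{\kappa-3}$ is used: since $a_{n'}<a_n\alpha^{n'-n}$ and $a_{n'}/a_n=2^{n'-n}r^{-(j_{n'}-j_n)}$, we get $(\kappa-3)(j_{n'}-j_n)>(n'-n)(1-\log_2\alpha)$, and the reverse comparison comes from $a_n>a_{n'}\alpha^{-(n'-n)}$. If the exact constants fail, I would pass to a subsequence of $I$, taking every second or third element, and absorb the loss into the constant $\lesssim_r$.

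Once the chain inequality holds, summing over $k$ telescopes to $\sum_k 2^{n_k-1}r^{-j_{n_k}-1}\le F_{n_1}(\cdot)\le F_0(T)$. The first index $n_1\le1$, or the initial terms, are bounded by $r^{-j_0}$ using $a_0=a_1/2=r^{-j_0}$. Multiplying by $2r$ and by the constant $2\alpha/(\alpha-1)$ from Lemma~\ref{tala lem} gives $\sum_n a_n\lesssim_r F_0(T)+r^{-j_0}$, uniformly in $t$.
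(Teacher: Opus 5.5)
\textbf{There is a genuine gap: the chain step fails as written.} The skeleton is right: Lemma~\ref{tala lem}, then a telescoping chain of quantities $F_\cdot(4B_\cdot(t,\cdot)\cap T)$ linked by Lemma~\ref{lem:ball-calculus}(iii).

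First, the payment is taken at the wrong level. For a selected $n>1$, the inequality $a_{n+1}<2a_n$ excludes $j_{n+1}=j_n$. Hence $j_{n+1}=j_n+1$, which by definition means $n\notin M_n(t)$, so \eqref{eq:recursive-payment} does \emph{not} hold at $n$. What the selection does give is $a_{n-1}<2a_n$. Since $r\ge4$, this excludes $j_n=j_{n-1}+1$, so $j_{n-1}=j_n$ and the payment holds at level $n-1$:
\[
2^{n-2}r^{-j_n-1}+F_n\bigl(4B_{j_n+2}(t,2^{n+\kappa-1})\cap T\bigr)\le F_{n-1}\bigl(4B_{j_n}(t,2^{n-1})\cap T\bigr).
\]
This is also what makes $(a_n)$ bounded: $a_n\le4rF_0(T)$ whenever $j_n=j_{n-1}$. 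The chain therefore has to run through $\Phi(n):=F_{n-1}(4B_{j_n}(t,2^{n-1})\cap T)$. Linking $n$ to a later $n'$ needs $4B_{j_{n'}}(t,2^{n'-1})\subset 4B_{j_n+2}(t,2^{n+\kappa-1})$. By Lemma~\ref{lem:ball-calculus}(iii) this holds if $d:=j_{n'}-j_n\ge2$ and $\kappa\le n'-n\le\kappa(d-1)$. The ``freedom $m\ge n$'' in $M_n(t)$ cannot replace this. The recursion only tests whether $n\in M_n(t)$, nothing places a larger $m$ in $M_n(t)$, and the argument never uses one. The larger radius has to be paid for by the larger $j$-index via (iii).

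Second, and this is the real content of the proposition, you never obtain $n'-n\le\kappa(d-1)$. With $\alpha=2$, the selection inequality $a_{n'}<2^{n'-n}a_n$ reduces to $r^{-d}<1$ and says nothing about $n'-n$. With $\alpha=\sqrt2$ it gives $n'-n<2(\kappa-3)d$, which does not imply $n'-n\le\kappa(d-1)$ once $\kappa\ge6$. Keeping every second or third selected index does not help. The selection inequality holds for every pair of selected indices, so thinning enlarges $d$ but does not improve the ratio $(n'-n)/d$. For example, selected indices spaced $2\kappa-7$ apart, with one increment between consecutive ones, satisfy all the selection inequalities for $\alpha=\sqrt2$. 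For two such indices $p$ apart, $n'-n=p(2\kappa-7)>\kappa(p-1)$ when $\kappa\ge7$.

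The missing idea in the paper is a second application of Lemma~\ref{tala lem}, to the subsequence $(a(n_k))_k$ of first-stage selected indices, indexed by position $k$ rather than by $n$.
\begin{itemize}
\item For second-stage selected $k<\ell$ with $q=\ell-k$, it gives $a(n_\ell)<2^qa(n_k)$.
\item Every first-stage index is followed by an increment of $j$, so $d\ge q$.
\item Hence $n_\ell-n_k<q+(\kappa-3)d\le(\kappa-2)d$.
\item Restricting to residue classes of $k$ modulo $\kappa$ forces $d\ge q\ge\kappa$. Then $(\kappa-2)d\le\kappa(d-1)$ and $n_\ell-n_k\ge\kappa$, and the chain telescopes within each of the $\kappa$ classes.
\end{itemize}
Within your one-selection framework the same effect can be obtained with $\alpha=2^{1/(\kappa-2)}$. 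This gives $n'-n<(\kappa-2)d$ for all selected pairs, and you then keep every $\kappa$-th selected index. Neither $\alpha=2$ nor $\alpha=\sqrt2$ suffices.
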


\begin{proof}
Fix $t\in T$ and write
\[
j(n):=j_n(t),\qquad a(n):=2^nr^{-j(n)}.
\]
Since $j(0)=j(1)=j_0$,
\[
a(0)+a(1)=3r^{-j_0}.
\]
The sequence $(a(n))_{n\ge1}$ is bounded. If
$j(n)=j(n-1)+1$, then
\[
a(n)=\frac2r a(n-1)\le\frac12a(n-1).
\]
If $j(n)=j(n-1)$ and $n\ge2$, then $n-1\in M_{n-1}(t)$ and
\eqref{eq:recursive-payment} gives
\[
a(n)\le4rF_0(T).
\]

Apply Lemma~\ref{tala lem}, with $\alpha=2$, to $(a(n))_{n\ge1}$.
Every selected index $n>1$ satisfies
\[
j(n-1)=j(n),\qquad j(n+1)=j(n)+1.
\]
Hence, with
\[
I_1:=\{1\}\cup\{n>1:j(n-1)=j(n),\ j(n+1)=j(n)+1\},
\]
we have
\begin{equation}\label{eq:first-selection}
\sum_{n\ge1}a(n)\le4\sum_{n\in I_1}a(n).
\end{equation}
Enumerate
\[
I_1=\{n_0,n_1,\ldots\},\qquad 1=n_0<n_1<\cdots,
\]
where the sequence is finite if $I_1$ is finite. Apply the lemma again to
$(a(n_k))$. Let $I_2$ be the resulting selected set, enlarged to contain
$n_0$. Then
\begin{equation}\label{eq:second-selection}
\sum_{n\in I_1}a(n)\le4\sum_{n\in I_2}a(n).
\end{equation}

Put
\[
\mathcal K:=\{k\ge1:n_k\in I_2\},\qquad
\mathcal K_s:=\{k\in\mathcal K:k\equiv s\pmod\kappa\},
\quad0\le s<\kappa.
\]
Let $k<\ell$ be consecutive elements of a non-empty $\mathcal K_s$ and set
\[
q:=\ell-k,\qquad d:=j(n_\ell)-j(n_k).
\]
Then $q\ge\kappa$. Since $n_i\in I_1$ for $i\ge1$,
\[
j(n_{i+1})\ge j(n_i)+1,
\]
and hence
\begin{equation}\label{eq:d-ge-q}
d\ge q\ge\kappa.
\end{equation}
The second selection gives
\[
a(n_\ell)<2^qa(n_k).
\]
Since $r=2^{\kappa-3}$,
\begin{equation}\label{eq:n-gap}
n_\ell-n_k\le q+(\kappa-3)d.
\end{equation}
Moreover,
\[
j(n_\ell-1)=j(n_\ell)=j(n_k)+d.
\]
From \eqref{eq:d-ge-q} and \eqref{eq:n-gap},
\[
n_\ell-1\le n_k+\kappa-1+\kappa(d-2).
\]
Lemma~\ref{lem:ball-calculus}(iii), with $d-2$ in place of $d$, yields
\begin{equation}\label{eq:ball-chain-inclusion}
4B_{j(n_\ell-1)}(t,2^{n_\ell-1})
\subset4B_{j(n_k)+2}(t,2^{n_k+\kappa-1}).
\end{equation}

Since $k\ge1$, $j(n_k-1)=j(n_k)$, and hence
$n_k-1\in M_{n_k-1}(t)$. Set
\[
\Phi_k:=F_{n_k-1}\bigl(4B_{j(n_k)}(t,2^{n_k-1})\cap T\bigr).
\]
By \eqref{eq:recursive-payment}, \eqref{eq:ball-chain-inclusion}, monotonicity
under inclusion, and $F_m\ge F_{m+1}$,
\begin{equation}\label{eq:telescoping-chain}
\frac1{4r}a(n_k)\le\Phi_k-\Phi_\ell.
\end{equation}
Summing \eqref{eq:telescoping-chain} separately on the $\kappa$ residue
classes gives
\[
\sum_{n_k\in I_2\setminus\{n_0\}}a(n_k)\lesssim_r F_0(T).
\]
For a finite chain, its last term is bounded by
$(4r)^{-1}a(n_k)\le\Phi_k\le F_0(T)$; for an infinite chain the differences
telescope. Finally,
\[
a(n_0)=a(1)=2r^{-j_0}.
\]
Combining this with \eqref{eq:first-selection} and
\eqref{eq:second-selection} proves the proposition.
\end{proof}

\subsection{Proof of Theorem~\ref{m-1}}

\begin{proof}[Proof of Theorem~\ref{m-1}]
Set
\[
\mathcal A_0=\mathcal A_1=\{T\},\qquad
q_0(T)=q_1(T)=j_0,
\]
and choose $t_{0,T}=t_{1,T}\in T$. By
\eqref{eq:initial-distance-compatibility},
\[
T\subset B_{j_0}(t_{0,T},1)\subset B_{j_0}(t_{1,T},2).
\]

Suppose that $n\ge1$ and that $\mathcal A_n$ has been constructed so that,
for every $A\in\mathcal A_n$,
\begin{equation}\label{eq:partition-induction}
j_n(t)=q_n(A)\quad(t\in A),\qquad
A\subset B_{q_n(A)}(t_{n,A},2^n).
\end{equation}
Fix $A\in\mathcal A_n$ and write $q=q_n(A)$. Put
\[
D_A:=\{t\in A:j_{n+1}(t)=q+1\},\qquad
C_A:=\{t\in A:j_{n+1}(t)=q\}.
\]

We claim that $D_A$ contains no points $t_1,\ldots,t_m$, with
$m=N_n/2$, such that
\begin{equation}\label{eq:separated-DA}
\varphi_{q+1}(t_\ell,t_{\ell'})>2^{n+1}\qquad(\ell\ne\ell').
\end{equation}
Otherwise set
\[
H_\ell:=4B_{q+2}(t_\ell,2^{n+\kappa})\cap T.
\]
The growth condition gives an index $h\le m$ such that
\[
F_n\left(\bigcup_{\ell=1}^mH_\ell\right)
\ge2^nr^{-q-1}+F_{n+1}(H_h).
\]
Since all $t_\ell$ belong to $B_q(t_{n,A},2^n)$,
Lemma~\ref{lem:ball-calculus}(i) gives
\[
\bigcup_{\ell=1}^mH_\ell\subset4B_q(t_h,2^n)\cap T.
\]
Hence
\[
F_n\bigl(4B_q(t_h,2^n)\cap T\bigr)
\ge2^nr^{-q-1}
+F_{n+1}\bigl(4B_{q+2}(t_h,2^{n+\kappa})\cap T\bigr).
\]
Since $t_h\in D_A$, we have $n\notin M_n(t_h)$. Hence, by the definition
of $M_n(t_h)$,
\[
F_n\bigl(4B_q(t_h,2^n)\cap T\bigr)
<
2^{n-1}r^{-q-1}
+
F_{n+1}\bigl(4B_{q+2}(t_h,2^{n+\kappa})\cap T\bigr),
\]
contradicting the preceding inequality.

Choose a maximal family $t_1,\ldots,t_L\in D_A$ satisfying
\eqref{eq:separated-DA}. Then $L<N_n/2$, and maximality gives
\[
D_A\subset\bigcup_{\ell=1}^L B_{q+1}(t_\ell,2^{n+1}).
\]
Define
\[
D_1:=D_A\cap B_{q+1}(t_1,2^{n+1}),
\]
and, for $2\le\ell\le L$,
\[
D_\ell:=D_A\cap\left(B_{q+1}(t_\ell,2^{n+1})
\setminus\bigcup_{h<\ell}B_{q+1}(t_h,2^{n+1})\right).
\]
After removing empty sets, give $D_\ell$ the label $q+1$ and center $t_\ell$.
If $C_A\ne\emptyset$, retain it as one further child, with label $q$ and
center $t_{n,A}$. Every child $A'$ of $A$ then satisfies
\[
q\le q_{n+1}(A')\le q+1,\qquad
A'\subset B_{q_{n+1}(A')}(t_{n+1,A'},2^{n+1}).
\]
Since $L+1\le N_n/2$, every $A\in\mathcal A_n$ has at most $N_n/2$
children. Therefore
\[
|\mathcal A_{n+1}|
\le |\mathcal A_n|\frac{N_n}{2}
\le N_{n+1}.
\]
This completes the recursion. By construction,
\[
q_n(A_n(t))=j_n(t),
\]
so Proposition~\ref{inequality-global} gives the asserted bound on the labels.

It remains to compare the labels with the partition functional. Define
\[
\mathcal B_0=\mathcal B_1=\mathcal B_2=\{T\},\qquad
\mathcal B_{n+2}:=\mathcal A_n\quad(n\ge1).
\]
This is an admissible sequence. If $A\in\mathcal A_n$ and $s,t\in A$, then
\[
\varphi_{q_n(A)}(s,t)
\le2\bigl(\varphi_{q_n(A)}(s,t_{n,A})
+\varphi_{q_n(A)}(t_{n,A},t)\bigr)
\le2^{n+2}.
\]
Hence
\[
j_{n+2}^\varphi(A)\ge q_n(A).
\]
Moreover, $\varphi_{j_0}\le1$ on $T$, so the contributions of
$\mathcal B_1$ and $\mathcal B_2$ are bounded by $6r^{-j_0}$. Thus
\[
\operatorname{Val}_\varphi(\mathcal B)
\lesssim r^{-j_0}
+\sup_{t\in T}\sum_{n\ge1}2^nr^{-q_n(A_n(t))}
\lesssim_r F_0(T)+r^{-j_0}.
\]
Taking the infimum over partition schemes proves the theorem.
\end{proof}

\subsection{The growth condition for points}

We now give the version for points used in Section~\ref{7}. Its proof is
independent of the preceding construction.

Let $(f_{n,j})_{n\ge0,\,j\ge j_0}$ be non-negative functions on $T$. We shall
use the monotonicity condition
\begin{equation}\label{eq:point-cone-monotonicity}
f_{n,j}(u)\ge f_{n',j'}(u)
\end{equation}
whenever
\[
j'>j\ge j_0,\qquad
0\le n'-n\le\kappa(j'-j),\qquad u\in T.
\]

\begin{definition}\label{groth con for point}
We say that $(f_{n,j})$ satisfies the \emph{growth condition for points} if,
for every $n\ge1$, $j\ge j_0$, $u\in T$, and every family
$t_1,\ldots,t_m\in T$, $m=N_n/2$, satisfying
\[
t_\ell\in2B_j(u,2^n)\qquad(1\le\ell\le m)
\]
and
\[
\varphi_{j+1}(t_\ell,t_{\ell'})>2^{n+1}
\qquad(\ell\ne\ell'),
\]
one has
\[
f_{n,j}(u)
\ge2^nr^{-j}+\min_{1\le\ell\le m}f_{n+\kappa,j+2}(t_\ell).
\]
\end{definition}

\begin{theorem}\label{Talagrand'}
Assume \eqref{eq:initial-distance-compatibility},
\eqref{eq:dj(s,t)=dj(t,s)}, \eqref{eq:dj<dj+dj}, and
\eqref{eq:ball-arithmetic}, and suppose that $j_0\in\mathbb Z$.
Let $(f_{n,j})$ satisfy \eqref{eq:point-cone-monotonicity} and the growth
condition for points. Put
\[
F_*:=\sup_{u\in T,\,n\ge0,\,j\ge j_0}f_{n,j}(u).
\]
Then
\[
\gamma_\varphi(T)\lesssim_r F_*+r^{-j_0}.
\]
More precisely, there exist a partition scheme $(\mathcal A_n)$, integers
$p_n(A)$, and centers $t_{n,A}\in T$ such that
\[
p_{n-1}(C)\le p_n(A)\le p_{n-1}(C)+1
\]
whenever $A\subset C$, with $A\in\mathcal A_n$ and
$C\in\mathcal A_{n-1}$,
\[
A\subset B_{p_n(A)}(t_{n,A},2^n),
\]
and
\[
\sup_{t\in T}\sum_{n\ge0}2^nr^{-p_n(A_n(t))}
\lesssim_r F_*+r^{-j_0}.
\]
\end{theorem}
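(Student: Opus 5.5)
The plan is to mimic the construction behind Theorem~\ref{m-1}, with the set functionals $F_n(4B_{j_n(t)}(t,2^n)\cap T)$ replaced by the numbers $f_{n,j}(u)$ evaluated at the \emph{center} $u$ of the current cell. Two substitutions drive the whole argument. The monotonicity \eqref{eq:point-cone-monotonicity} takes over the role that Lemma~\ref{lem:ball-calculus}(iii) together with $F_{m+1}\le F_m$ played in \eqref{eq:telescoping-chain}. The growth condition for points takes over the role of Definition~\ref{groth con}. The only ball manipulation still needed is the one giving $A\subset B_p(u,2^n)\Rightarrow A-u\subset B_p(2^n)\subset 2B_p(2^n)$, together with the triangle inequality \eqref{eq:dj<dj+dj}. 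I expect \eqref{eq:ball-arithmetic} to suffice for this, which explains why \eqref{mild condition} is absent from the hypotheses.

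\emph{Construction.} Start with $\mathcal A_0=\mathcal A_1=\{T\}$, label $p=j_0$ and any center. Compatibility \eqref{eq:initial-distance-compatibility} gives $T\subset B_{j_0}(t_{1,T},2)$.

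Inductively, let $A\in\mathcal A_n$ have label $p$ and center $u$, with $A\subset B_p(u,2^n)$. Choose greedily a maximal family $t_1,t_2,\ldots$ in $A$ that is $\varphi_{p+1}$-separated at level $2^{n+1}$. At each step, pick $t_\ell$ to \emph{maximize} $f_{n+\kappa,p+2}$ among the remaining admissible points, up to a factor $2$ if the supremum is not attained.

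\begin{itemize}
\item If the family has fewer than $N_n/2$ elements, cut $A$ into the cells $D_\ell=A\cap B_{p+1}(t_\ell,2^{n+1})\setminus\bigcup_{h<\ell}B_{p+1}(t_h,2^{n+1})$, each with label $p+1$ and center $t_\ell$.
\item Otherwise, use only the first $N_n/2-1$ greedy points to cut off cells with label $p+1$, and keep the remainder $C_A$ as one more child with label $p$ and center $u$.
\end{itemize}

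By the greedy choice, every point in the remainder has $f_{n+\kappa,p+2}$-value at most that of every selected point, up to the factor $2$. Applying the growth condition for points to the $N_n/2$ separated points, which all lie in $B_p(u,2^n)\subset 2B_p(u,2^n)$, then gives the payment inequality
\[
f_{n,p}(u)\ge 2^n r^{-p}+\tfrac12\,\sup\nolimits_{t\in C_A} f_{n+\kappa,p+2}(t).
\]
This is the pointwise analogue of \eqref{eq:recursive-payment}. The label constraints $p_{n-1}(C)\le p_n(A)\le p_{n-1}(C)+1$ and $A\subset B_{p_n(A)}(t_{n,A},2^n)$ hold by construction, and the cardinality count $|\mathcal A_{n+1}|\le |\mathcal A_n|N_n/2\le N_{n+1}$ is the same as in the proof of Theorem~\ref{m-1}.

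\emph{Bounding the sum.} Fix $t$ and set $a(n)=2^nr^{-p_n(A_n(t))}$. I will repeat the proof of Proposition~\ref{inequality-global} verbatim:
\begin{itemize}
\item apply Lemma~\ref{tala lem} twice with $\alpha=2$;
\item split the indices into $\kappa$ residue classes;
\item use $r=2^{\kappa-3}$ to get $n_\ell-n_k\le q+(\kappa-3)d$ with $d\ge q\ge\kappa$.
\end{itemize}
The potential is now $\Phi_k:=f_{n_k-1,\,p(n_k)}(u_k)$, where $u_k$ is the center of $A_{n_k-1}(t)$. For consecutive selected indices $k<\ell$, the payment inequality bounds $a(n_k)/(4r)$ by $\Phi_k$ minus $\tfrac12 f_{n_k-1+\kappa,\,p(n_k)+2}$ at the center of the next cell. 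The gap estimates place $(n_\ell-1,p(n_\ell))$ inside the cone of $(n_k-1+\kappa,p(n_k)+2)$, so \eqref{eq:point-cone-monotonicity} continues the telescoping to $\Phi_\ell$. The sum over each residue class is therefore $\lesssim F_*$. The $\gamma_\varphi$ bound then follows exactly as at the end of the proof of Theorem~\ref{m-1}: shift indices by two and use \eqref{eq:dj<dj+dj}.

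\emph{Main obstacle.} The delicate point is that \eqref{eq:point-cone-monotonicity} applies only at a \emph{fixed} point $u$, while the centers along the chain of $t$ change whenever the label increases. The comparison between $f$ at the successive centers has to come from the greedy maximality, that is, from the supremum over $C_A$ in the payment inequality. The loss of $\tfrac12$ there must not accumulate along the chain. I would first try to absorb it by replacing $2^nr^{-p}$ with $2^{n-1}r^{-p-1}$, as in the definition of $M_n(t)$, and then verify that the exact bookkeeping of which center carries which pair $(n,j)$ closes the telescoping.
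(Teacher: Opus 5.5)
Your route differs from the paper's and can be completed, but as written the telescoping does not close, exactly where you suspect. The problem is the choice of greedy points only up to a factor $2$. Along the chain the payment inequality then gives $\frac12a(n_k)\le\Phi_k-\frac12\Phi_\ell$. Summing over a residue class leaves half of every intermediate potential, which can be as large as $F_*$ times the length of the chain. Lowering the additive term to $2^{n-1}r^{-p-1}$ cannot repair this, because the loss is multiplicative on the $f$-term.

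The remedy is an \emph{additive} near-maximizer. Assuming $F_*<\infty$, pick each greedy point with $f_{n+\kappa,p+2}(t_\ell)\ge\sup f_{n+\kappa,p+2}-2^{n-1}r^{-p}$, the supremum taken over the points still admissible. Every point of $C_A$ is admissible at each of the $N_n/2$ steps, so the growth condition for points at $u$ gives
\[
f_{n,p}(u)\ge2^{n-1}r^{-p}+\sup_{x\in C_A}f_{n+\kappa,p+2}(x),
\]
with coefficient one on the supremum.

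You must also compare at the right point. It is not the center of $A_{n_k}(t)$, which is still $u_k$ and need not lie in $C_{A_{n_k-1}(t)}$. It is the center $u_\ell$ of $A_{n_\ell-1}(t)$. Centers change only when the label increases, and the new center is a greedy point lying in the new cell. Since $p(n_k+1)=p(n_k)+1$ and $p(n_\ell-1)=p(n_\ell)\ge p(n_k)+\kappa$, the last increase before level $n_\ell-1$ occurs at some step $m\ge n_k$. Hence $u_\ell\in A_{m+1}(t)\subset A_{n_k}(t)=C_{A_{n_k-1}(t)}$. Then \eqref{eq:point-cone-monotonicity} at the fixed point $u_\ell$, with the same gap estimates as in the paper, yields $\frac14a(n_k)\le\Phi_k-\Phi_\ell$. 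The rest of your argument then goes through.

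The paper avoids both issues by defining the scales pointwise before any partition exists: $p_{n+1}(t)=p_n(t)$ exactly when $f_{n,p_n(t)}(t)\ge2^{n-1}r^{-p_n(t)}+f_{n+\kappa,p_n(t)+2}(t)$. Proposition~\ref{inequality-points} then telescopes at the fixed point $t$, with no moving centers and no approximate suprema. The growth condition enters only through a contradiction showing that the set of points whose scale increases contains no $N_n/2$ separated points. There it is applied with $u=t_h$, the separated point minimizing $f_{n+\kappa,q+2}$, a minimum over finitely many values. This is why the hypothesis reads $t_\ell\in2B_j(u,2^n)$ and why \eqref{eq:ball-arithmetic} is needed at that step. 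Your version applies the growth condition at the cell's own center, so it only invokes it for families lying in $B_p(u,2^n)$. The price is partition-dependent scales, near-maximizers over possibly infinite sets, and the center bookkeeping above.
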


Set
\[
p_0(t)=p_1(t)=j_0,\qquad t\in T.
\]
For $n\ge1$, once $p_n(t)$ has been defined, put
\begin{equation}\label{eq:point-scale-recursion}
p_{n+1}(t)=
\begin{cases}
p_n(t),&
 f_{n,p_n(t)}(t)\ge
 2^{n-1}r^{-p_n(t)}+f_{n+\kappa,p_n(t)+2}(t),\\
p_n(t)+1,&\text{otherwise}.
\end{cases}
\end{equation}
Thus $0\le p_{n+1}(t)-p_n(t)\le1$.

\begin{proposition}\label{inequality-points}
For the recursion \eqref{eq:point-scale-recursion},
\[
\sup_{t\in T}\sum_{n\ge0}2^nr^{-p_n(t)}
\lesssim_r F_*+r^{-j_0}.
\]
\end{proposition}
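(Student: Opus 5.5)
The plan is to repeat the proof of Proposition~\ref{inequality-global} almost verbatim. The set functionals $F_n(4B_{j_n(t)}(t,2^n)\cap T)$ are replaced by the point values $f_{n,p_n(t)}(t)$. The monotonicity under inclusion together with Lemma~\ref{lem:ball-calculus}(iii) is replaced by the cone monotonicity \eqref{eq:point-cone-monotonicity}. Fix $t\in T$ and write $p(n):=p_n(t)$ and $a(n):=2^nr^{-p(n)}$. Then $a(0)+a(1)=3r^{-j_0}$, and $(a(n))_{n\ge1}$ is bounded, for the following reasons.
\begin{itemize}
\item If $p(n)=p(n-1)+1$, then $a(n)=\frac2r a(n-1)\le\frac12a(n-1)$, since $r\ge4$.
\item If $p(n)=p(n-1)$ with $n\ge2$, the first case of \eqref{eq:point-scale-recursion} at step $n-1$ gives $2^{n-2}r^{-p(n)}\le f_{n-1,p(n)}(t)\le F_*$, so $a(n)\le 4F_*$.
\end{itemize}

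Next we apply Lemma~\ref{tala lem} with $\alpha=2$ twice, exactly as before. The first application restricts the sum to
\[
I_1=\{1\}\cup\{n>1:p(n-1)=p(n),\ p(n+1)=p(n)+1\},
\]
enumerated as $1=n_0<n_1<\cdots$. The second application gives a set $I_2\ni n_0$ with $\sum_n a(n)\le 16\sum_{n\in I_2}a(n)$. We then split $\mathcal K=\{k\ge1:n_k\in I_2\}$ into residue classes mod $\kappa$. Let $k<\ell$ be consecutive in one class, with $q=\ell-k$ and $d=p(n_\ell)-p(n_k)$. As before $d\ge q\ge\kappa$, and the second selection $a(n_\ell)<2^qa(n_k)$ gives
\[
n_\ell-n_k\le q+(\kappa-3)d\le(\kappa-2)d .
\]

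The telescoping step is the only real change. Set $\Phi_k:=f_{n_k-1,p(n_k)}(t)$. Since $p(n_k-1)=p(n_k)$, the first case of the recursion at step $n_k-1$ gives
\[
\Phi_k\ge 2^{n_k-2}r^{-p(n_k)}+f_{n_k-1+\kappa,\,p(n_k)+2}(t).
\]
We now want $f_{n_k-1+\kappa,p(n_k)+2}(t)\ge\Phi_\ell$. This follows from \eqref{eq:point-cone-monotonicity} with
\[
j=p(n_k)+2,\quad j'=p(n_\ell)=j+d-2,\quad n=n_k-1+\kappa,\quad n'=n_\ell-1 .
\]
We need three conditions.
\begin{itemize}
\item $j'>j$, which holds because $d\ge\kappa\ge5$.
\item $n'-n=n_\ell-n_k-\kappa\ge0$, which holds because $n_\ell-n_k\ge q\ge\kappa$.
\item $n'-n\le\kappa(d-2)$, which reduces to $n_\ell-n_k\le\kappa(d-1)$. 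This follows from $(\kappa-2)d\le\kappa d-\kappa$, i.e.\ $2d\ge\kappa$, true since $d\ge\kappa$.
\end{itemize}
Also $j\ge j_0$, since $p$ is non-decreasing from $j_0$. Hence
\[
\tfrac14a(n_k)\le\Phi_k-\Phi_\ell .
\]
Summing along each of the $\kappa$ residue chains telescopes. For a finite chain, the last term is bounded by $\frac14a(n_k)\le\Phi_k\le F_*$. This gives $\sum_{k\in\mathcal K}a(n_k)\le 4\kappa F_*$.

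Finally, adding $a(n_0)=2r^{-j_0}$ and $a(0)=r^{-j_0}$ and combining with the two applications of Lemma~\ref{tala lem} yields $\sum_{n\ge0}a(n)\lesssim_r F_*+r^{-j_0}$ uniformly in $t$. The main point to check carefully is the index bookkeeping in the cone monotonicity step above. In particular, the shift $n\mapsto n+\kappa$ and $j\mapsto j+2$ built into the recursion must fit inside the admissible cone $0\le n'-n\le\kappa(j'-j)$. This is where $d\ge q\ge\kappa$ and $r=2^{\kappa-3}$ are used.
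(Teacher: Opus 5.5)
Your proposal is correct and follows the paper's proof essentially step for step: the same two applications of Lemma~\ref{tala lem} with $\alpha=2$, the same splitting of the selected indices into residue classes mod $\kappa$, and the same telescoping quantity $\Phi_k=f_{n_k-1,p(n_k)}(t)$. As in the paper, $\Phi_k$ is compared with $\Phi_\ell$ via \eqref{eq:point-cone-monotonicity} at $j=p(n_k)+2$, $j'=p(n_\ell)$; your explicit check that $0\le n_\ell-n_k-\kappa\le\kappa(d-2)$ follows from $n_\ell-n_k\le(\kappa-2)d$ and $d\ge q\ge\kappa$ is the bookkeeping the paper states in one line.
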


\begin{proof}
If $F_*=\infty$, there is nothing to prove. Fix $t\in T$ and write
\[
p(n):=p_n(t),\qquad a(n):=2^nr^{-p(n)}.
\]
The sequence $(a(n))_{n\ge1}$ is bounded. If
$p(n)=p(n-1)+1$, then
\[
a(n)=\frac2r a(n-1)\le\frac12a(n-1).
\]
If $p(n)=p(n-1)$ and $n\ge2$, the recursion at level $n-1$ gives
\[
\frac14a(n)\le f_{n-1,p(n)}(t)\le F_*.
\]

Apply Lemma~\ref{tala lem}, with $\alpha=2$, to $(a(n))_{n\ge1}$. Every
selected index $n>1$ satisfies
\[
p(n-1)=p(n),\qquad p(n+1)=p(n)+1.
\]
Hence, with
\[
I_1:=\{1\}\cup\{n>1:p(n-1)=p(n),\ p(n+1)=p(n)+1\},
\]
\begin{equation}\label{eq:point-first-selection}
\sum_{n\ge1}a(n)\le4\sum_{n\in I_1}a(n).
\end{equation}
Write
\[
I_1=\{n_0,n_1,\ldots\},\qquad1=n_0<n_1<\cdots.
\]
Apply Lemma~\ref{tala lem} again to $(a(n_k))_{k\ge0}$, and let $J$ be the
set provided by the lemma. Put
\[
I_2:=\{n_0\}\cup\{n_k:k\in J\}.
\]
Then
\begin{equation}\label{eq:point-second-selection}
\sum_{n\in I_1}a(n)\le4\sum_{n\in I_2}a(n).
\end{equation}

Set
\[
\mathcal K:=J\setminus\{0\},\qquad
\mathcal K_s:=\{k\in\mathcal K:k\equiv s\pmod\kappa\},
\quad0\le s<\kappa.
\]
Let $k<\ell$ be consecutive elements of a non-empty $\mathcal K_s$, and put
\[
q:=\ell-k,\qquad d:=p(n_\ell)-p(n_k).
\]
Then $q\ge\kappa$. Since $n_i\in I_1$ for $i\ge1$,
\[
p(n_{i+1})\ge p(n_i)+1,
\]
so
\begin{equation}\label{eq:point-d-ge-q}
d\ge q\ge\kappa.
\end{equation}
The second selection gives
\[
a(n_\ell)<2^qa(n_k),
\]
and hence
\begin{equation}\label{eq:point-n-gap}
n_\ell-n_k\le q+(\kappa-3)d.
\end{equation}
Since $n_\ell-n_k\ge\ell-k=q\ge\kappa$,
\[
0\le n_\ell-n_k-\kappa\le\kappa(d-2).
\]
By \eqref{eq:point-cone-monotonicity},
\begin{equation}\label{eq:point-future-comparison}
f_{n_k+\kappa-1,p(n_k)+2}(t)
\ge f_{n_\ell-1,p(n_\ell)}(t).
\end{equation}
Since $k\ge1$, $p(n_k-1)=p(n_k)$. The recursion at level $n_k-1$ gives
\[
\frac14a(n_k)
\le f_{n_k-1,p(n_k)}(t)
-f_{n_k+\kappa-1,p(n_k)+2}(t).
\]
Put
\[
\Phi_k:=f_{n_k-1,p(n_k)}(t).
\]
Together with \eqref{eq:point-future-comparison}, this yields
\begin{equation}\label{eq:point-telescoping}
\frac14a(n_k)\le\Phi_k-\Phi_\ell.
\end{equation}

For each $\ell=0,\ldots,\kappa-1$, sum
\eqref{eq:point-telescoping} over all $n$ such that
$n\equiv \ell \pmod{\kappa}$.
For an infinite chain the differences telescope because $\Phi_k\ge0$. For a
finite chain, its final term satisfies
\[
\frac14a(n_k)\le\Phi_k\le F_*.
\]
Thus
\[
\sum_{n_k\in I_2\setminus\{n_0\}}a(n_k)\lesssim_r F_*.
\]
Since $a(n_0)=a(1)=2r^{-j_0}$,
\eqref{eq:point-first-selection} and \eqref{eq:point-second-selection} give
\[
\sum_{n\ge1}a(n)\lesssim_r F_*+r^{-j_0}.
\]
Adding $a(0)=r^{-j_0}$ and taking the supremum over $t$ proves the
proposition.
\end{proof}

\begin{proof}[Proof of Theorem~\ref{Talagrand'}]
Set
\[
\mathcal A_0=\mathcal A_1=\{T\},\qquad
p_0(T)=p_1(T)=j_0,
\]
and choose $t_{0,T}=t_{1,T}\in T$. By
\eqref{eq:initial-distance-compatibility},
\[
T\subset B_{j_0}(t_{0,T},1)
\subset B_{j_0}(t_{1,T},2).
\]

Suppose that $n\ge1$ and that $\mathcal A_n$ has been constructed so that,
for every $A\in\mathcal A_n$,
\[
p_n(t)=p_n(A)\qquad(t\in A),
\]
and
\[
A\subset B_{p_n(A)}(t_{n,A},2^n).
\]
Fix $A\in\mathcal A_n$ and write
\[
q:=p_n(A).
\]
Put
\[
D_A:=\{t\in A:p_{n+1}(t)=q+1\},
\qquad
C_A:=\{t\in A:p_{n+1}(t)=q\}.
\]

We claim that $D_A$ contains no points
$t_1,\ldots,t_m$, where $m=N_n/2$, satisfying
\[
\varphi_{q+1}(t_\ell,t_{\ell'})>2^{n+1}
\qquad(\ell\ne\ell').
\]
Indeed, choose $h$ such that
\[
f_{n+\kappa,q+2}(t_h)
=
\min_{1\le\ell\le m}
f_{n+\kappa,q+2}(t_\ell).
\]
Since $t_\ell,t_h\in B_q(t_{n,A},2^n)$,
\eqref{eq:ball-arithmetic} gives
\[
t_\ell\in2B_q(t_h,2^n)
\qquad(1\le\ell\le m).
\]
The growth condition for points, applied with $u=t_h$, therefore gives
\[
f_{n,q}(t_h)
\ge
2^nr^{-q}
+
f_{n+\kappa,q+2}(t_h).
\]
On the other hand, $t_h\in D_A$, and hence, by
\eqref{eq:point-scale-recursion},
\[
f_{n,q}(t_h)
<
2^{n-1}r^{-q}
+
f_{n+\kappa,q+2}(t_h),
\]
a contradiction.

If $D_A\ne\emptyset$, choose a maximal family
$t_1,\ldots,t_L\in D_A$ such that
\[
\varphi_{q+1}(t_\ell,t_{\ell'})>2^{n+1}
\qquad(\ell\ne\ell').
\]
Then $L<N_n/2$, and maximality gives
\[
D_A
\subset
\bigcup_{\ell=1}^L
B_{q+1}(t_\ell,2^{n+1}).
\]
Define
\[
D_1
:=
D_A\cap B_{q+1}(t_1,2^{n+1}),
\]
and, for $2\le\ell\le L$,
\[
D_\ell
:=
D_A\cap
\left(
B_{q+1}(t_\ell,2^{n+1})
\setminus
\bigcup_{h<\ell}
B_{q+1}(t_h,2^{n+1})
\right).
\]
After removing the empty sets, put
\[
p_{n+1}(D_\ell)=q+1,
\qquad
t_{n+1,D_\ell}=t_\ell.
\]
If $C_A\ne\emptyset$, put
\[
p_{n+1}(C_A)=q,
\qquad
t_{n+1,C_A}=t_{n,A}.
\]
If $D_A=\emptyset$, only the latter set occurs.

Let $\mathcal A_{n+1}$ consist of all the non-empty sets obtained in this
way as $A$ ranges over $\mathcal A_n$. For every such set
$A'\subset A$,
\[
p_n(A)\le p_{n+1}(A')\le p_n(A)+1,
\]
and
\[
A'
\subset
B_{p_{n+1}(A')}(t_{n+1,A'},2^{n+1}).
\]
Moreover, each $A\in\mathcal A_n$ gives at most $L+1\le N_n/2$ sets in
$\mathcal A_{n+1}$. Hence
\[
|\mathcal A_{n+1}|
\le
|\mathcal A_n|\frac{N_n}{2}
\le
N_{n+1}.
\]
This completes the construction of the partition scheme. By construction,
\[
p_n(A_n(t))=p_n(t),
\]
and therefore Proposition~\ref{inequality-points} gives
\[
\sup_{t\in T}
\sum_{n\ge0}
2^nr^{-p_n(A_n(t))}
\lesssim_r
F_*+r^{-j_0}.
\]

It remains to compare these indices with the partition functional. Define
\[
\mathcal B_0=\mathcal B_1=\mathcal B_2=\{T\},
\qquad
\mathcal B_{n+2}:=\mathcal A_n
\quad(n\ge1).
\]
This is an admissible sequence. If $A\in\mathcal A_n$ and $s,t\in A$, then
\[
\begin{aligned}
\varphi_{p_n(A)}(s,t)
&\le
2\bigl(
\varphi_{p_n(A)}(s,t_{n,A})
+
\varphi_{p_n(A)}(t_{n,A},t)
\bigr) \\
&\le 2^{n+2}.
\end{aligned}
\]
Thus
\[
j_{n+2}^\varphi(A)\ge p_n(A).
\]
Moreover, $\varphi_{j_0}\le1$ on $T$, so the contributions of
$\mathcal B_1$ and $\mathcal B_2$ are bounded by $6r^{-j_0}$. Consequently,
\[
\operatorname{Val}_\varphi(\mathcal B)
\lesssim
r^{-j_0}
+
\sup_{t\in T}
\sum_{n\ge1}
2^nr^{-p_n(A_n(t))}
\lesssim_r
F_*+r^{-j_0}.
\]
Taking the infimum over partition schemes proves the theorem.
\end{proof}

\section{Parameterized separation trees and the proof of Theorem~\ref{m-2}}
\label{4}
We define the separation-tree functional and prove Theorem~\ref{m-2}.
The first inequality uses only
\eqref{eq:initial-distance-compatibility},
\eqref{eq:dj(s,t)=dj(t,s)}, and \eqref{eq:dj<dj+dj}.
The reverse inequality also uses
\eqref{eq:ball-arithmetic} and \eqref{mild condition}.

\begin{definition}\label{definition:abstract-tree}
Let $H$ be non-empty. A finite collection $\mathcal T$ of non-empty subsets
of $H$ is a \emph{tree on $H$} if $H\in\mathcal T$ and, whenever
$A,B\in\mathcal T$ intersect, either $A\subset B$ or $B\subset A$. The set
$H$ is the root. If $B\subsetneq A$ and no $C\in\mathcal T$ satisfies
$B\subsetneq C\subsetneq A$, then $B$ is a child of $A$. We write $c(A)$ for
the set of children of $A$ and
\[
 S_{\mathcal T}:=\bigcup\{A\in\mathcal T:c(A)=\emptyset\}.
\]
\end{definition}

\begin{definition}\label{tree}
Let $H\subset T$ be non-empty. A \emph{parameterized separation
$\varphi$-tree} on $H$ is a tree $\mathcal T$ on $H$ together with parameters
\[
 \mathbf n(A)\in\{1,2,\ldots\},
 \qquad
 \mathbf j(A)\in\mathbb Z\cup\{\infty\},
 \qquad A\in\mathcal T\setminus\{H\},
\]
satisfying the following conditions. If $A$ is not terminal, then all
$B\in c(A)$ have common parameters, say
\[
 \mathbf n(B)=n,
 \qquad
 \mathbf j(B)=j.
\]
If $A=H$, then $n\ge1$; if $A\ne H$, then $n>\mathbf n(A)$.

If $|A|\ge N_n/2$, then $j\in\mathbb Z$,
\[
 |c(A)|=N_n/2,
\]
and, for distinct $B,B'\in c(A)$,
\begin{equation}\label{eq:tree-separation}
 \varphi_{j+2}(B,B')>2^n,
\end{equation}
where
\[
 \varphi_j(B,B'):=\inf\{\varphi_j(s,t):s\in B,\ t\in B'\}.
\]
If $|A|<N_n/2$, then
\[
 c(A)=\{\{t\}:t\in A\},
 \qquad
 j=\infty.
\]
No condition is imposed on a terminal node.
\end{definition}

When $\varphi$ is fixed, we simply say separation tree.

\begin{definition}\label{def:rho(T)_a}
For a separation tree $\mathcal T$ on a non-empty set $H\subset T$, define
\[
\operatorname{Size}(\mathcal T)
:=r^{-j_0(H)}+
\inf_{t\in S_{\mathcal T}}
\sum_{\substack{A\in\mathcal T\setminus\{H\}\\t\in A}}
2^{\mathbf n(A)}r^{-\mathbf j(A)}.
\]
\end{definition}

Define
\begin{equation}\label{eq:rho-tree-functional}
\rho_\varphi(T)
:=\sup\{\operatorname{Size}(\mathcal T):
\mathcal T\text{ is a separation tree on a non-empty }H\subset T\}.
\end{equation}
The one-set tree on $T$ gives
\[
\rho_\varphi(T)\ge r^{-j_0}.
\]

We first prove the one-sided estimate.

\begin{theorem}\label{main theorem2}
Let $T\subset E$ be non-empty. Assume
\eqref{eq:initial-distance-compatibility},
\eqref{eq:dj(s,t)=dj(t,s)}, and \eqref{eq:dj<dj+dj}. Then
\[
 \rho_\varphi(T)\lesssim_r\mathcal M_\varphi(T).
\]
\end{theorem}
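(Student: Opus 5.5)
The plan is to fix a separation tree $\mathcal T$ on a non-empty $H\subset T$ and a probability measure $\mu$ on $T$ with countable support. I will find a single leaf point $t\in S_{\mathcal T}$ with
\[
\sum_{A\in\mathcal T\setminus\{H\},\,t\in A}2^{\mathbf n(A)}r^{-\mathbf j(A)}\lesssim_r I_{\mu,\varphi}(t).
\]
Since $r^{-j_0(H)}\le r^{-j_0}\le I_{\mu,\varphi}(t)$, this gives $\operatorname{Size}(\mathcal T)\lesssim_r\sup_t I_{\mu,\varphi}(t)$. Taking the infimum over $\mu$ and the supremum over trees then yields Theorem~\ref{main theorem2}. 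The point $t$ is found by descending the finite tree from the root, choosing at each step a child of small $\mu$-mass.

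\emph{Descent step.} Let $A$ be a non-terminal node whose children carry parameters $(n,j)$. If $|A|<N_n/2$, the children are singletons with $j=\infty$ and contribute nothing, so I pick any of them. If $|A|\ge N_n/2$, there are $N_n/2$ children, pairwise separated by $\varphi_{j+2}(B,B')>2^n$. For each child $B$ put
\[
B^+:=\{s\in T:\exists b\in B,\ \varphi_{j+2}(s,b)\le2^{n-2}\}.
\]
By \eqref{eq:dj<dj+dj} and symmetry, a point $s\in B^+\cap B'^+$ would give $b\in B$, $b'\in B'$ with $\varphi_{j+2}(b,b')\le 2(2^{n-2}+2^{n-2})=2^n$, which is impossible. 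So the sets $B^+$ are disjoint, and some child $B$ satisfies $\mu(B^+)\le 2/N_n$. I descend to that child. For every $t\in B$ and every $m\le n-2$ we have $B_{j+2}(t,2^m)\cap T\subset B^+$, hence $\mu(B_{j+2}(t,2^m))\le 2/N_n$.

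\emph{Converting to $k^\mu$.} Let $t$ be any point of the final leaf, and consider each node on its path with finite $\mathbf j=j$ and $\mathbf n=n$.
\begin{itemize}
\item If $n\ge4$, take $m=n-3$. Then $2/N_n<N_{n-3}^{-1}$, so $k^\mu_{n-3}(t)\le j+1$ by monotonicity of the balls in $j$. Hence $2^nr^{-j}\le 8r\cdot 2^{n-3}r^{-k^\mu_{n-3}(t)}$.
\item If $n\in\{2,3\}$, take $m=1$. Then $2/N_n<1/4=N_1^{-1}$, so $k_1^\mu(t)\le j+1$ and $2^nr^{-j}\le 8r\cdot 2r^{-k_1^\mu(t)}$.
\item If $n=1$, I use separation and \eqref{eq:initial-distance-compatibility} instead. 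If $j+2\le j_0(H)$, then $\varphi_{j+2}\le1$ on $H$, contradicting $\varphi_{j+2}(B,B')>2$. So $j\ge j_0(H)-1$ and $2r^{-j}\le 2r\,r^{-j_0}$.
\end{itemize}
Since $\mathbf n$ strictly increases along the path, each index $m$ of $k^\mu_m$ is used at most twice (only $m=1$ is hit twice). Summing gives the pathwise bound with a constant of order $r$.

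\emph{Main obstacle.} The delicate point is the disjointness of the enlargements $B^+$. It uses only the quasi-triangle inequality \eqref{eq:dj<dj+dj}, which forces the radius $2^{n-2}$ and then the shift $n\mapsto n-3$ in the measure index. Two further checks are needed here: that every step of the descent stays within $T$, where $\mu$ lives, and that the boundary case of equality is handled; the strict inequality in \eqref{eq:tree-separation} takes care of the latter. The small-$n$ bookkeeping, where $N_{n-3}$ is undefined, is handled by the separate cases $n\in\{1,2,3\}$ above.
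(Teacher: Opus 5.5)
Your strategy is the paper's. You descend the tree, and at each node with children parameters $(n,j)$, $j$ finite, you pick a child whose radius-$2^{n-2}$ neighbourhood at level $j+2$ has small $\mu$-mass. Disjointness of these neighbourhoods comes from \eqref{eq:dj<dj+dj} and the strict separation, and you count $N_n/2$ of them against total mass $1$; small $n$ is handled via \eqref{eq:initial-distance-compatibility}. Your pigeonhole on the sets $B^+$ is equivalent to the paper's contradiction argument with $q(A)=\sup_{t\in A}k^\mu_{n-2}(t)$.

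\textbf{Gap at $n=2$.} Your descent step only gives $B_{j+2}(t,2^m)\cap T\subset B^+$ for $m\le n-2$. For $n=2$ this means $m\le 0$, whereas $k^\mu_1$ concerns balls of radius $2^1$. The radius-$2$ neighbourhoods of the children cannot be shown disjoint: \eqref{eq:dj<dj+dj} only gives $\varphi_{j+2}(b,b')\le 2(2+2)=8$, while the separation is merely $>4$. So nothing excludes a heavy atom of $\mu$ within $\varphi_{j+2}$-distance $2$ of points of every child. Your claim $k^\mu_1(t)\le j+1$ for $n=2$ is therefore unjustified.

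\textbf{Repair.} Use the argument you already give for $n=1$. Since $\varphi_{j+2}(B,B')>2^n>1$, \eqref{eq:initial-distance-compatibility} forces $j\ge j_0(H)-1$, hence $2^nr^{-j}\le 4r\,r^{-j_0}$. The paper treats all nodes with $\mathbf n(A)\le 2$ exactly this way.

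\textbf{Counting.} As written, $n=2,3,4$ all land on $m=1$, so $m=1$ is used up to three times, not ``only twice''; after the repair your ``twice'' becomes correct. The shift to $n-3$ is also not forced. Since $2/N_n<N_{n-2}^{-1}$ for every $n\ge 3$, you may use $k^\mu_{n-2}$ for all $n\ge3$, as the paper does. Each index $m$ is then used at most once.
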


\begingroup
\begin{proof}
If $j_0=-\infty$, then
\[
\gamma_\varphi(T)=\mathcal M_\varphi(T)=\rho_\varphi(T)=\infty,
\]
and there is nothing to prove. If $j_0=\infty$, the three quantities vanish.
Thus we may assume that $j_0\in\mathbb Z$.

Fix a probability measure $\mu$ on $T$ with countable support, and let $\mathcal T$ be a separation tree on a non-empty set $H\subset T$. If $j_0(H)=\infty$, then \eqref{eq:initial-distance-compatibility} gives $\varphi_j(s,t)\le1$ for all $s,t\in H$ and all finite $j$. Hence no node of $\mathcal T$ with a finite scale parameter can satisfy the required separation. All non-root contributions are therefore zero, and $\operatorname{Size}(\mathcal T)=0$. We may assume that $j_0(H)<\infty$. We shall find $t_0\in S_{\mathcal T}$ such that
\[
\operatorname{Size}(\mathcal T)\lesssim_r I_{\mu,\varphi}(t_0).
\]

Let $B\in\mathcal T$ and suppose that $c(B)\ne\emptyset$. Let $n$ and $j$ be the common parameters of its children. We first assume that $n>2$ and $j<\infty$. For $A\in c(B)$, put
\[
q(A):=\sup_{t\in A}k_{n-2}^\mu(t).
\]
We claim that one child $A$ satisfies
\begin{equation}\label{eq:tree-measure-choice}
q(A)\le j+2.
\end{equation}
Suppose otherwise. Since $j<\infty$, the second case in Definition~\ref{tree} cannot occur. Hence
\[
c(B)=\{A_l:1\le l\le N_n/2\}.
\]
For each $l$, choose $t_l\in A_l$ and an integer $q_l\ge j+3$ such that
\[
\mu(B_{q_l}(t_l,2^{n-2}))\ge N_{n-2}^{-1}.
\]
This is possible because $q(A_l)>j+2$.

These balls are pairwise disjoint. Indeed, if $x$ belonged to the balls with indices $l$ and $l'$, then monotonicity and \eqref{eq:dj<dj+dj} would give
\begin{align*}
\varphi_{j+2}(t_l,t_{l'})
&\le2\bigl(\varphi_{j+2}(t_l,x)+\varphi_{j+2}(x,t_{l'})\bigr)\\
&\le2(2^{n-2}+2^{n-2})=2^n.
\end{align*}
This contradicts Definition~\ref{tree}. Therefore
\[
1=\mu(T)\ge\frac{N_n}{2N_{n-2}}>1,
\]
which is impossible. This proves \eqref{eq:tree-measure-choice}. If $j=\infty$, the children are singletons by Definition~\ref{tree}; we choose any one of them. Its contribution is zero.

Starting from $H$, repeat this choice whenever the common value of $\mathbf n$ is larger than $2$. At the other steps choose any child. The tree is finite, so the procedure ends at a set $D$ with $c(D)=\emptyset$. Choose $t_0\in D$.

Let $A$ be a set on the chosen branch and suppose that $A\ne H$, $\mathbf n(A)=n>2$, and $\mathbf j(A)<\infty$. Since $t_0\in A$, \eqref{eq:tree-measure-choice} gives
\[
k_{n-2}^\mu(t_0)\le\mathbf j(A)+2.
\]
Thus
\[
2^nr^{-\mathbf j(A)}
\le4r^2\,2^{n-2}r^{-k_{n-2}^\mu(t_0)}.
\]
The values of $\mathbf n$ increase along the branch, so each term on the right is used at most once.

It remains to consider $\mathbf n(A)\le2$. If $\mathbf j(A)<\infty$, then
\[
\mathbf j(A)\ge j_0(H)-1.
\]
Indeed, otherwise $\mathbf j(A)+2\le j_0(H)$. By \eqref{eq:initial-distance-compatibility} and monotonicity, two distinct sets at this step would satisfy
\[
\varphi_{\mathbf j(A)+2}\le\varphi_{j_0(H)}\le1,
\]
contrary to the required separation, which is larger than $2^{\mathbf n(A)}\ge2$. There are at most two such values of $\mathbf n(A)$. Their total contribution is at most a constant depending on $r$ times $r^{-j_0(H)}$.

Finally, $H\subset T$ gives $j_0(H)\ge j_0(T)$. Hence
\begin{align*}
\operatorname{Size}(\mathcal T)
&\lesssim_r r^{-j_0(T)}+\sum_{m\ge1}2^mr^{-k_m^\mu(t_0)}\\
&=I_{\mu,\varphi}(t_0).
\end{align*}
Taking the supremum over $H$ and $\mathcal T$, and then the infimum over $\mu$, proves the theorem.
\end{proof}
\endgroup

We next prove the reverse inequality under the additional ball assumptions.

\begin{theorem}\label{thm:upper_bound}
Let $T\subset E$ be non-empty. Assume
\eqref{eq:initial-distance-compatibility},
\eqref{eq:dj(s,t)=dj(t,s)}, \eqref{eq:dj<dj+dj},
\eqref{eq:ball-arithmetic}, and \eqref{mild condition}. Then
\[
 \mathcal M_\varphi(T)\lesssim_r\rho_\varphi(T).
\]
\end{theorem}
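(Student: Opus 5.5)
We prove $\mathcal M_\varphi(T)\lesssim_r\rho_\varphi(T)$ by going through the partition functional, as announced in the introduction. By Proposition~\ref{equivalence} it suffices to show $\gamma_\varphi(T)\lesssim_r\rho_\varphi(T)$. For this we apply Theorem~\ref{m-1} to a family of set functionals built from separation trees. The cases $j_0=\pm\infty$ are disposed of exactly as in the proof of Theorem~\ref{main theorem2}, so we assume $j_0\in\mathbb Z$; we may also assume $\rho_\varphi(T)<\infty$.

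\textbf{The functionals.} For $A\subset T$ and $n\ge0$ we define
\[
F_n(A):=\sup\Bigl\{\inf_{t\in S_{\mathcal T}}\sum_{B\in\mathcal T\setminus\{H\},\,t\in B}2^{\mathbf n(B)}r^{-\mathbf j(B)}\Bigr\},
\]
where the supremum runs over separation trees $\mathcal T$ on non-empty $H\subset A$ whose first-generation parameter satisfies $\mathbf n>n$ (set $F_n(\emptyset)=0$). Monotonicity under inclusion is immediate, since a tree on a subset of $A$ is a tree on a subset of $C$. The inequality $F_{n+1}\le F_n$ holds because the admissible class shrinks as $n$ grows. Moreover $F_0(T)\le\rho_\varphi(T)$, so Theorem~\ref{m-1} yields
\[
\gamma_\varphi(T)\lesssim_r F_0(T)+r^{-j_0}\lesssim_r\rho_\varphi(T),
\]
once the growth condition is established. (If the tree definition forces a particular root condition, we adjust by letting the first generation carry $\mathbf n=n+1$ at level $n$; the bookkeeping costs only constants depending on $r$.)

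\textbf{The growth condition.} This is the main step. Take $H_1,\dots,H_m$, $m=N_n/2$, and points $u,t_\ell$ as in Definition~\ref{groth con}. Fix $\varepsilon>0$ and choose for each $\ell$ a tree $\mathcal T_\ell$ on some $H'_\ell\subset H_\ell$ with first-generation parameter $>n+1$ and value at least $F_{n+1}(H_\ell)-\varepsilon$. We then join them under a new root $H:=\bigcup_\ell H'_\ell$, taking the $H'_\ell$ as its $m$ children with common parameters $\mathbf n=n+1$ and $\mathbf j=j-1$ (or a shift of $j$ fixed by the calculus). The separation requirement \eqref{eq:tree-separation} then reads $\varphi_{j+1}(H'_\ell,H'_{\ell'})>2^{n+1}$.

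To obtain it, we use Lemma~\ref{lem:ball-calculus}(ii). The hypothesis $\varphi_{j+1}(t_\ell,t_{\ell'})>2^{n+1}$ gives $\varphi_{j+2}\bigl(4B_{j+2}(t_\ell,2^{n+\kappa}),4B_{j+2}(t_{\ell'},2^{n+\kappa})\bigr)>2^n$, and hence the separation at level $j+2$ with threshold $2^n$. So the natural choice is children parameters $(\mathbf n,\mathbf j)=(n,j)$, and the subtrees must have first-generation parameter $>n$. We therefore define $F_n$ with first generation $\mathbf n\ge n+1$, and the subtrees for $F_{n+1}$ have $\mathbf n\ge n+2>n$, which is compatible.

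The new tree then has value at least
\[
2^nr^{-j}+\min_\ell F_{n+1}(H_\ell)-\varepsilon,
\]
which exceeds the required $2^nr^{-j-1}+\min_\ell F_{n+1}(H_\ell)$. One must also check that the case $|A|<N_n/2$ in Definition~\ref{tree} does not interfere at the new root. It does not, because the root has exactly $m=N_n/2$ children, so $|H|\ge N_n/2$. The fact that the $t_\ell$ lie in $B_j(u,2^n)$ is not even needed here beyond guaranteeing that $H\subset T$.

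\textbf{Expected obstacle.} The delicate point is matching indices precisely. Definition~\ref{tree} requires $n>\mathbf n(A)$ for the children of a non-root node, so the subtrees' first parameter must strictly exceed the new children's parameter $n$. The definition of $F_n$ must be calibrated accordingly so that both $F_{n+1}\le F_n$ and the joined tree are legitimate. A further minor obstacle is that the infimum over leaves is taken over $S_{\mathcal T}$ of the joined tree, which is the union of the leaves of the $\mathcal T_\ell$. Along a leaf in $H'_\ell$ the sum is $2^nr^{-j}$ plus the sum in $\mathcal T_\ell$, giving the minimum over $\ell$ as required.
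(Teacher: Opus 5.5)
Your route is the paper's: tree-based set functionals $F_n$, the growth condition of Definition~\ref{groth con} obtained by joining near-optimal subtrees below the sets $H_\ell$ (separated via Lemma~\ref{lem:ball-calculus}(ii)), then Theorem~\ref{m-1} and Proposition~\ref{equivalence}. However, the index calibration you finally settle on makes the central step fail.

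You define $F_n$ as a supremum over trees whose first generation has $\mathbf n\ge n+1$. Since $\mathbf n$ increases strictly along branches, this means all non-root parameters are $\ge n+1$. You then build the joined tree with children parameters $(\mathbf n,\mathbf j)=(n,j)$. That tree has first-generation parameter $n$, so it is not admissible in the supremum defining $F_n(\bigcup_\ell H_\ell)$; it only bounds $F_{n-1}(\bigcup_\ell H_\ell)$ from below. Your check ``$n+2>n$, which is compatible'' shows that the joined object is a legitimate separation tree, not that it belongs to the class defining $F_n$. So the inequality $F_n(\bigcup_\ell H_\ell)\ge2^nr^{-j}+\min_\ell F_{n+1}(H_\ell)-\varepsilon$ is not established, and it is the only nontrivial input to Theorem~\ref{m-1}.

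The repair is a one-step index shift, which is exactly the paper's choice: let $F_n$ range over trees with $\mathbf n(A)\ge n$ for every non-root $A$. Then:
\begin{itemize}
\item the subtrees chosen for $F_{n+1}(H_\ell)$ have parameters $\ge n+1>n$, so the joined tree with children $(n,j)$ is admissible for $F_n$;
\item monotonicity, $F_{n+1}\le F_n$ and $F_0(T)\le\rho_\varphi(T)$ are unaffected;
\item the root requirement $\mathbf n\ge1$ holds because the growth condition is only needed for $n\ge1$, so your vague parenthetical about adjusting the root at a cost depending on $r$ becomes unnecessary.
\end{itemize}

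Alternatively, you may keep your definition and give the children parameters $(n+1,j+1)$. By \eqref{mild condition} at level $j+2$ and $\lambda B_{j+2}(c)\subset B_{j+2}(c)$ for $0\le\lambda\le1$, one has $B_{j+3}(2^{n+1})\subset B_{j+3}(2^{n+\kappa})\subset B_{j+2}(2^n)$. Hence the separation $\varphi_{j+2}>2^n$ from Lemma~\ref{lem:ball-calculus}(ii) upgrades to $\varphi_{j+3}>2^{n+1}$, and the new contribution $2^{n+1}r^{-j-1}$ still meets the growth condition.

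Dropping the paper's extra factor $r^{-1}$ in $F_n$ is harmless, since $2^nr^{-j}\ge2^nr^{-j-1}$. With either fix the rest of your argument goes through.
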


\begingroup
\begin{proof}
If $\rho_\varphi(T)=\infty$, there is nothing to prove. If $j_0=\infty$, then \eqref{eq:initial-distance-compatibility} gives $\varphi_j(s,t)\le1$ for every $s,t\in T$ and every finite $j$. Thus $\mathcal M_\varphi(T)=0$, and the conclusion again follows. We may therefore assume that $\rho_\varphi(T)<\infty$ and $j_0<\infty$.

For $n\ge0$ and every non-empty subset $U\subset T$, define
\[
F_n(U):=\sup_{H,\mathcal T}\ \inf_{t\in S_{\mathcal T}}
\sum_{\substack{A\in\mathcal T\setminus\{H\}\\t\in A}}
2^{\mathbf n(A)}r^{-\mathbf j(A)-1},
\]
where the supremum is taken over all non-empty $H\subset U$ and all separation trees $\mathcal T$ on $H$ such that
\[
\mathbf n(A)\ge n\qquad(A\in\mathcal T\setminus\{H\}).
\]
Set $F_n(\emptyset)=0$. Enlarging $U$ enlarges the collection of admissible trees, so $F_n$ is non-decreasing under inclusion. Increasing $n$ restricts this collection. Consequently,
\[
0\le F_{n+1}(U)\le F_n(U)\le r^{-1}\rho_\varphi(T)<\infty.
\]

We next verify the growth condition in Definition~\ref{groth con}. Fix $n\ge1$ and $j\in\mathbb Z$, and let $H_l$, $1\le l\le N_n/2$, satisfy the assumptions of that definition. Lemma~\ref{lem:ball-calculus}(ii) gives
\[
 \varphi_{j+2}(H_l,H_{l'})>2^n
 \qquad(l\ne l').
\]
Fix $\varepsilon>0$. For every $l$, choose a non-empty set $K_l\subset H_l$ and a separation tree $\mathcal T_l$ on $K_l$ such that all its sets different from $K_l$ have parameter at least $n+1$, and
\[
\inf_{t\in S_{\mathcal T_l}}
\sum_{\substack{A\in\mathcal T_l\setminus\{K_l\}\\t\in A}}
2^{\mathbf n(A)}r^{-\mathbf j(A)-1}
\ge F_{n+1}(H_l)-\varepsilon.
\]
Let
\[
K=\bigcup_{l\le N_n/2}K_l.
\]
Make the sets $K_l$ the children of $K$, and give them the parameters
\[
\mathbf n(K_l)=n,
\qquad
\mathbf j(K_l)=j.
\]
Let $\mathcal T$ be the tree obtained by taking $K$ as the root,
$K_l$ as its sons, and, below each $K_l$, the corresponding tree
$\mathcal T_l$. Since the sets $K_l$ are non-empty and pairwise separated,
they are pairwise disjoint. Moreover, $|K|\ge N_n/2$, so the root has at
least $N_n/2$ sons. These sons have parameters $(n,j)$ and satisfy the
required separation condition. The parameters of their descendants are at
least $n+1$. Hence $\mathcal T$ is a separation tree rooted at $K$.

It follows that
\[
F_n\Bigl(\bigcup_{l\le N_n/2}H_l\Bigr)
\ge2^nr^{-j-1}+\min_{l\le N_n/2}F_{n+1}(H_l)-\varepsilon.
\]
Letting $\varepsilon\to0$, we obtain the growth condition.

Since $j_0<\infty$, condition~\eqref{eq:initial-distance-compatibility} gives $\varphi_{j_0}(s,t)\le1$ for all $s,t\in T$. Applying Theorem~\ref{m-1} to the functionals $(F_n)$ therefore gives
\[
\gamma_\varphi(T)\lesssim_r rF_0(T)+r^{-j_0}.
\]
By the definition of $F_0$ and \eqref{eq:rho-tree-functional},
\[
rF_0(T)\le\rho_\varphi(T),
\qquad
r^{-j_0}\le\rho_\varphi(T).
\]
Consequently,
\[
\gamma_\varphi(T)\lesssim_r\rho_\varphi(T).
\]
Using Proposition~\ref{equivalence}, we obtain
\[
\mathcal M_\varphi(T)\lesssim_r\gamma_\varphi(T)
\lesssim_r\rho_\varphi(T),
\]
as required.
\end{proof}
\endgroup

Combining the preceding estimates gives the following equivalent form of Theorem~\ref{m-2}.

\begin{theorem}\label{thm:equv}
Let $T\subset E$ be non-empty. Assume
\eqref{eq:initial-distance-compatibility},
\eqref{eq:dj(s,t)=dj(t,s)}, and \eqref{eq:dj<dj+dj}. Then
\[
 \rho_\varphi(T)\lesssim_r\mathcal M_\varphi(T).
\]
If, in addition, \eqref{eq:ball-arithmetic} and
\eqref{mild condition} hold, then
\[
 \rho_\varphi(T)\asymp_r\mathcal M_\varphi(T)
 \asymp_r\gamma_\varphi(T).
\]
\end{theorem}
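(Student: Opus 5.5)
This theorem is a restatement of Theorem~\ref{m-2}, so the plan is to combine results already proved in this section with Proposition~\ref{equivalence}. For the first assertion, only \eqref{eq:initial-distance-compatibility}, \eqref{eq:dj(s,t)=dj(t,s)}, and \eqref{eq:dj<dj+dj} are assumed. Under exactly these hypotheses, Theorem~\ref{main theorem2} gives
\[
\rho_\varphi(T)\lesssim_r\mathcal M_\varphi(T).
\]
Nothing further is needed here, since the degenerate cases $j_0=\pm\infty$ were already handled inside that proof.

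For the second assertion, we additionally assume \eqref{eq:ball-arithmetic} and \eqref{mild condition}. Theorem~\ref{thm:upper_bound} then gives $\mathcal M_\varphi(T)\lesssim_r\rho_\varphi(T)$. Together with the first assertion, this yields $\rho_\varphi(T)\asymp_r\mathcal M_\varphi(T)$. Proposition~\ref{equivalence} requires only \eqref{eq:initial-distance-compatibility}, \eqref{eq:dj(s,t)=dj(t,s)}, and \eqref{eq:dj<dj+dj}, and it supplies $\gamma_\varphi(T)\asymp_r\mathcal M_\varphi(T)$. Chaining these two-sided comparisons gives
\[
\rho_\varphi(T)\asymp_r\mathcal M_\varphi(T)\asymp_r\gamma_\varphi(T),
\]
with constants depending only on $r$, i.e.\ on $\kappa$.

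The only point requiring care is the bookkeeping of infinite values, which I expect to be the main (mild) obstacle. When $j_0=-\infty$, all three quantities equal $\infty$: $\rho_\varphi(T)\ge r^{-j_0}$ by the one-set tree, and the other two contain the term $r^{-j_0}$. When $j_0=\infty$, all three vanish by \eqref{eq:initial-distance-compatibility}. When $\rho_\varphi(T)=\infty$, the first assertion forces $\mathcal M_\varphi(T)=\infty$, and Proposition~\ref{equivalence} then forces $\gamma_\varphi(T)=\infty$. In each case the comparisons hold trivially, so the equivalences are consistent in $[0,\infty]$ and the theorem follows.
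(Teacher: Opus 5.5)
Your proposal is correct and matches the paper's proof. Theorem~\ref{main theorem2} gives the first assertion, Theorem~\ref{thm:upper_bound} combined with it gives $\rho_\varphi(T)\asymp_r\mathcal M_\varphi(T)$, and Proposition~\ref{equivalence} adds $\gamma_\varphi(T)$; your handling of the cases $j_0=\pm\infty$ and $\rho_\varphi(T)=\infty$ is accurate, though those cases are already covered inside the cited results.
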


\begin{proof}
The first assertion is Theorem~\ref{main theorem2}. Under the additional assumptions \eqref{eq:ball-arithmetic} and
\eqref{mild condition}, Theorem~\ref{thm:upper_bound} gives
\[
\mathcal M_\varphi(T)\lesssim_r\rho_\varphi(T).
\]
Together with the first assertion, this yields
\[
\rho_\varphi(T)\asymp_r\mathcal M_\varphi(T).
\]
Finally, Proposition~\ref{equivalence} gives
\[
\mathcal M_\varphi(T)\asymp_r\gamma_\varphi(T),
\]
which proves the second assertion and completes the proof of Theorem~\ref{m-2}.
\end{proof}
    
\section{Examples of Theorem~\ref{m-2}}
\label{5}
We first consider canonical processes with regular log-concave tails and then
turn to Bernoulli processes.

\subsection{Canonical processes with regular log-concave tails}

Let $(X_i)_{i\ge1}$ be independent symmetric random variables and put
\[
 U_i(s):=-\log \textsf P(|X_i|\ge s),\qquad s\ge0.
\]
Assume that each $U_i$ is convex and, after rescaling the coordinates, that
\[
 U_i(1)=1.
\]
We also assume Lata{\l}a's regularity conditions: for some $C_0\ge1$,
\begin{equation}\label{eq:regular-tail-assumptions}
 U_i(2s)\le C_0U_i(s),\qquad s\ge1,
 \qquad
 U_i'(0+)\ge C_0^{-1}.
\end{equation}
Set
\[
 \widehat U_i(x):=
 \begin{cases}
  x^2,& |x|\le1,\\
  2U_i(|x|)-1,& |x|>1.
 \end{cases}
\]
The function $\widehat U_i$ is even and convex. At $1$ the two pieces have
the same value, and convexity of $U_i$, together with $U_i(0)=0$ and
$U_i(1)=1$, gives $U_i'(1+)\ge1$, so the right derivative of the second
piece is at least the left derivative of $x^2$.

For $u>0$ define
\[
 C(u):=
 \left\{a\in c_{00}:\sum_{i\ge1}\widehat U_i(a_i)\le u\right\},
\]
\[
 N_u(t):=
 \sup_{a\in C(u)}\left|\sum_{i\ge1}t_i a_i\right|,
 \qquad
 B(u):=\{t\in\ell^2:N_u(t)\le u\}.
\]
The sets $B(u)$ are symmetric, convex, and increasing in $u$. Indeed, if
$0<u\le v$, convexity of the functions $\widehat U_i$ gives
\[
 C(v)\subset \frac vu C(u),
\]
and therefore
\[
 N_v(t)\le\frac vu N_u(t).
\]
In particular, $B(u)\subset B(v)$.

Fix $\kappa\ge5$, to be chosen below, put $r=2^{\kappa-3}$, and define
\begin{equation}\label{eq:regular-logconcave-family}
 \varphi_j(s,t)
 :=\inf\{u>0:r^j(s-t)\in B(u)\},
 \qquad j\in\mathbb Z.
\end{equation}
This is the standard family used in Lata{\l}a's partition theorem. It satisfies
\eqref{eq:dj(s,t)=dj(t,s)} and \eqref{eq:dj<dj+dj}; see
\cite{lata1,talab}. Since the sets $B(u)$ are closed and increasing,
\begin{equation}\label{eq:regular-ball-identification}
 B_j(c)=r^{-j}B(c),
 \qquad c>0.
\end{equation}

For the initial distance put
\[
 d_0(s,t):=\inf\{a>0:s-t\in aB(1)\}.
\]
By \eqref{eq:regular-ball-identification},
\[
 d_0(s,t)\le r^{-j}
 \quad\Longleftrightarrow\quad
 \varphi_j(s,t)\le1.
\]
Thus \eqref{eq:initial-distance-compatibility} holds. Moreover,
\[
j_0
=
\sup\bigl\{
j\in\mathbb Z:T-T\subset r^{-j}B(1)
\bigr\}.
\]

We record the two ball relations used in the reverse inequality of
Theorem~\ref{m-2}.
Since $B(c)$ is convex and contains the origin,
\[
 aB(c)+bB(c)\subset(a+b)B(c),
 \qquad a,b\ge0.
\]
Together with \eqref{eq:regular-ball-identification}, this proves
\eqref{eq:ball-arithmetic}.

The assumptions \eqref{eq:regular-tail-assumptions} imply
\begin{equation}\label{eq:modified-tail-doubling}
 \widehat U_i(2x)\le K\widehat U_i(x),
 \qquad x\in\mathbb R,
\end{equation}
uniformly in $i$, where one may take $K=8C_0$. For $|x|\le1/2$ this follows
from the quadratic part. If $1/2<|x|\le1$, then
$U_i(2|x|)\le U_i(2)\le C_0$, while $x^2\ge1/4$. If $|x|>1$, then
$U_i(|x|)\ge1$ and
\[
 2U_i(2|x|)-1
 \le2C_0U_i(|x|)
 \le2C_0\widehat U_i(x).
\]
Choose $\kappa$ so large that
\[
 r=2^{\kappa-3}\ge K^8.
\]
We claim that
\begin{equation}\label{eq:regular-ball-contraction}
 B(8ru)\subset \frac r{20}B(u),
 \qquad u>0.
\end{equation}
Let $t\in B(8ru)$ and let $(b_i)\in c_{00}$ satisfy
\[
 \sum_i\widehat U_i(b_i)\le u.
\]
Iterating \eqref{eq:modified-tail-doubling} gives
\[
 \sum_i\widehat U_i(2^8b_i)
 \le K^8u
 \le ru
 \le8ru.
\]
Since $N_{8ru}(t)\le8ru$,
\[
 2^8\left|\sum_i b_it_i\right|\le8ru.
\]
Hence
\[
 \left|\sum_i b_i\frac{20t_i}{r}\right|
 \le\frac{160}{2^8}u
 \le u.
\]
Taking the supremum over all such $(b_i)$ yields
\[
 N_u\left(\frac{20t}{r}\right)\le u,
\]
which proves \eqref{eq:regular-ball-contraction}. Since $2^\kappa=8r$,
\eqref{eq:regular-ball-identification} now gives
\[
 \begin{aligned}
 B_{j+1}(2^{n+\kappa})
 &=r^{-j-1}B(8r2^n)\\
 &\subset \frac1{20}r^{-j}B(2^n)
 =\frac1{20}B_j(2^n).
 \end{aligned}
\]
Thus \eqref{mild condition} holds.

Lata{\l}a's majorizing-measure theorem, in its partition form, gives
\begin{equation}\label{eq:latala-partition-comparison}
 \textsf E\sup_{t\in T}X_t
 \asymp_{C_0,r}\gamma_\varphi(T).
\end{equation}
Here, as throughout the paper, the assertion for an arbitrary $T$ is
understood through its finite subsets.

Combining \eqref{eq:latala-partition-comparison} with Theorem~\ref{m-2} gives
the following form of the tree representation.

\begin{theorem}\label{canonical processes}
Under the assumptions above,
\[
 \textsf E\sup_{t\in T}X_t
 \asymp_{C_0,r}\rho_\varphi(T).
\]
\end{theorem}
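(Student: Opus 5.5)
The plan is to chain two comparisons. The first is Lata{\l}a's partition theorem \eqref{eq:latala-partition-comparison}, which identifies $\textsf E\sup_{t\in T}X_t$ with $\gamma_\varphi(T)$ up to constants depending on $C_0$ and $r$. The second is the second assertion of Theorem~\ref{m-2}, which gives $\gamma_\varphi(T)\asymp_r\rho_\varphi(T)$. All the work therefore lies in checking that the family \eqref{eq:regular-logconcave-family}, together with the initial distance $d_0$, satisfies every hypothesis of Theorem~\ref{m-2}. The constant $\kappa$ is the one fixed above, with $r=2^{\kappa-3}\ge K^8$ and $K=8C_0$, so $r$ depends only on $C_0$.

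I would carry out the verification in order.
\begin{enumerate}[label=(\alph*)]
\item Condition \eqref{eq:initial-distance-compatibility} follows from the equivalence $d_0(s,t)\le r^{-j}\Leftrightarrow\varphi_j(s,t)\le1$, which comes from \eqref{eq:regular-ball-identification}.
\item For \eqref{eq:dj(s,t)=dj(t,s)}, symmetry and translation invariance hold because $\varphi_j$ depends only on $s-t$ and $B(u)$ is symmetric. Monotonicity $\varphi_{j+1}\ge\varphi_j$ holds because $r\ge1$ and $B(u)$ is convex and contains $0$, so $r^{j+1}x\in B(u)$ implies $r^jx\in B(u)$.
\item The quasi-triangle inequality \eqref{eq:dj<dj+dj} is quoted from \cite{lata1,talab}. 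It can also be checked directly: if $x\in B(u)$ and $y\in B(v)$ with $w=u+v$, then $N_{2w}(x+y)\le 2N_w(x)+2N_w(y)$. Each of these terms is bounded using $N_w\le(w/u)N_u$, which gives $N_{2w}(x+y)\le 2w$, so $x+y\in B(2(u+v))$.
\item Condition \eqref{eq:ball-arithmetic} follows from convexity of $B(c)$.
\item Condition \eqref{mild condition} was just derived from \eqref{eq:regular-ball-contraction}.
\end{enumerate}

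With all hypotheses in place, Theorem~\ref{m-2} gives $\rho_\varphi(T)\asymp_r\gamma_\varphi(T)$, and combining this with \eqref{eq:latala-partition-comparison} proves the theorem with constants depending on $C_0$ and $r$.

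Two points need care. The first is the degenerate cases of $j_0$. If $j_0=\pm\infty$, then $\gamma_\varphi$, $\mathcal M_\varphi$ and $\rho_\varphi$ are all infinite or all zero, as in the proofs of Proposition~\ref{equivalence} and Theorem~\ref{main theorem2}. The expected supremum behaves the same way by \eqref{eq:latala-partition-comparison}, so both sides agree. The second is that \eqref{eq:latala-partition-comparison} is stated for finite index sets. I would pass to general $T$ using the definition of $\textsf E\sup$ as a supremum over finite $F\subset T$. This requires $\rho_\varphi(T)=\sup_F\rho_\varphi(F)$, which is immediate because separation trees are finite: each tree lives on a finite set $H\subset T$, and $\operatorname{Size}$ computed in $F=H$ is the same, since $j_0(H)$ and $\varphi$ do not depend on the ambient set. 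I expect this finite-subset reduction to be the only real obstacle; everything else is a direct citation of hypotheses already verified above.
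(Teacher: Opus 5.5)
Your route is the paper's: the theorem is \eqref{eq:latala-partition-comparison} combined with Theorem~\ref{m-2}, so everything rests on checking the hypotheses for the family \eqref{eq:regular-logconcave-family}. That check breaks at (c).

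Your own bounds give $2N_w(x)+2N_w(y)\le 2w+2w=4w$, not $2w$. The target inclusion $B(u)+B(v)\subset B(2(u+v))$ is in fact false. Take symmetric exponential coordinates, $U_i(s)=s$, which satisfy \eqref{eq:regular-tail-assumptions} with $C_0=2$. Then $N_u(e_1)=(u+1)/2$ for $u\ge1$ and $N_u(e_1)=\sqrt u$ for $u<1$. Hence $e_1\in B(1)$, while $N_c(2e_1)>c$ for every $c>0$. So $\varphi_0(2e_1,e_1)=\varphi_0(e_1,0)=1$ but $\varphi_0(2e_1,0)=\infty$, and \eqref{eq:dj<dj+dj} fails for this family with any constant. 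The paper asserts \eqref{eq:dj<dj+dj} here by the same citation to \cite{lata1,talab}. The example shows that the citation cannot supply it in this form, so Theorem~\ref{m-2} cannot be invoked as a black box.

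Convexity does give a level-shifted inequality. Since $r\ge2$ and \eqref{eq:regular-ball-identification} holds, $B_j(c)+B_j(c)\subset 2r^{-j}B(c)\subset B_{j-1}(c)$, that is,
\[
\varphi_{j-1}(s,t)\le\max\{\varphi_j(s,v),\varphi_j(v,t)\}.
\]
For $\rho_\varphi\asymp_r\gamma_\varphi$ only two uses of \eqref{eq:dj<dj+dj} matter, and both survive this substitution at the cost of at most a factor $r$.

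\begin{itemize}
\item In the last step of the proof of Theorem~\ref{m-1}, $A\subset B_{q_n(A)}(t_{n,A},2^n)$ now gives $j_n^\varphi(A)\ge q_n(A)-1$. Through the proof of Theorem~\ref{thm:upper_bound}, this yields $\gamma_\varphi\lesssim_r\rho_\varphi$.
\item In the disjointness step of Theorem~\ref{main theorem2}, $q_l\ge j+3$ gives $\varphi_{j+2}(t_l,t_{l'})\le2^{n-2}$. Together with the half $\mathcal M_\varphi\lesssim_r\gamma_\varphi$ of Proposition~\ref{equivalence}, which uses no triangle inequality, this yields $\rho_\varphi\lesssim_r\gamma_\varphi$.
\end{itemize}

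You need to make this replacement explicitly. In particular, do not rely on the cited reverse half of Proposition~\ref{equivalence}.

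A smaller point concerns your finite-subset reduction. Separation trees need not live on finite sets: the root and the terminal nodes may be infinite. So $\rho_\varphi(T)=\sup_F\rho_\varphi(F)$ needs a pruning argument. Pass to a finite $H'\subset H$ that
\begin{itemize}
\item meets every node,
\item contains every node whose children are singletons, and
\item contains two points at $d_0$-distance larger than $r^{-j_0(H)-1}$.
\end{itemize}
Separation passes to subsets, the branch sums are unchanged, and $j_0(H')=j_0(H)$.
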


\subsection{A one-sided estimate for Bernoulli processes}
For Bernoulli processes we use only the first assertion of Theorem~\ref{m-2}. Together with Theorem~\ref{bernoulli partition} and Proposition~\ref{equivalence}, it gives
\begin{proposition}
    Let $(\varphi_j)_{j\in\mathbb Z}$ be the family of distances for Bernoulli processes defined in \eqref{bernoulli: family of distances}, with $d_0(s,t)=\|s-t\|_2$. Then
    $$
    \rho_\varphi(T)\lesssim_r \textsf{E}\sup\limits_{t\in T}\mathcal{E}_t.$$
\end{proposition}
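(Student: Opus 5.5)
The plan is to chain three earlier results, using only the basic assumptions on $(\varphi_j)$. We never need \eqref{eq:ball-arithmetic} or \eqref{mild condition}, which may fail for the Bernoulli family.

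First, verify that the Bernoulli family \eqref{bernoulli: family of distances} with $d_0(s,t)=\|s-t\|_2$ satisfies the hypotheses of Theorem~\ref{main theorem2}. Condition \eqref{eq:initial-distance-compatibility} was already checked in Section~\ref{preliminaries}, since $\varphi_j(s,t)\le r^{2j}\|s-t\|_2^2$. The properties in \eqref{eq:dj(s,t)=dj(t,s)} are immediate from the formula:
\begin{itemize}
\item vanishing on the diagonal and symmetry are clear;
\item translation invariance holds because only $s_i-t_i$ enters;
\item monotonicity in $j$ holds because $r\ge1$, so $(r^{j+1}x)^2\wedge1\ge(r^jx)^2\wedge1$.
\end{itemize}
For the quasi-triangle inequality \eqref{eq:dj<dj+dj}, it suffices to check coordinatewise that $g(x)=(x^2)\wedge1$ satisfies $g(a+b)\le2(g(a)+g(b))$. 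If $|a|,|b|<1$, this follows from $(a+b)^2\le2a^2+2b^2$. Otherwise the right-hand side is at least $2\ge g(a+b)$. Summing over $i$ gives \eqref{eq:dj<dj+dj}.

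Second, apply Theorem~\ref{main theorem2}, i.e.\ the first assertion of Theorem~\ref{m-2}, to get $\rho_\varphi(T)\lesssim_r\mathcal M_\varphi(T)$. Then Proposition~\ref{equivalence}, which also needs only \eqref{eq:initial-distance-compatibility}, \eqref{eq:dj(s,t)=dj(t,s)} and \eqref{eq:dj<dj+dj}, gives $\mathcal M_\varphi(T)\asymp_r\gamma_\varphi(T)=\gamma_{\mathcal E}(T)$. Finally, Theorem~\ref{bernoulli partition} gives $\gamma_{\mathcal E}(T)\asymp_r\textsf E\sup_{t\in T}\mathcal E_t$. Chaining these three comparisons yields the claim.

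The main point requiring care is that Proposition~\ref{equivalence} is used only in the direction $\mathcal M_\varphi\lesssim_r\gamma_\varphi$. That direction was proved directly in the paper from a partition scheme by assigning masses, so it is self-contained. The degenerate cases $j_0=\pm\infty$ are handled inside the cited results: if $T$ is unbounded in $\ell^2$ both sides are infinite, and if $T$ is a point both sides vanish. For general $T$ we may also pass through finite subsets, matching the convention defining $\textsf E\sup$, since $\rho_\varphi$ is a supremum over trees, which are finite by definition.
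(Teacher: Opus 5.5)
Your proposal is correct and follows the paper's argument exactly. Theorem~\ref{main theorem2} gives $\rho_\varphi(T)\lesssim_r\mathcal M_\varphi(T)$, the directly proved direction of Proposition~\ref{equivalence} gives $\mathcal M_\varphi(T)\lesssim_r\gamma_{\mathcal E}(T)$, and Theorem~\ref{bernoulli partition} finishes; your explicit check of \eqref{eq:dj(s,t)=dj(t,s)}, \eqref{eq:dj<dj+dj} and \eqref{eq:initial-distance-compatibility} for the Bernoulli family fills in what the paper leaves implicit. The closing remark about passing to finite subsets is not needed, since all the cited results are stated for arbitrary $T$, and its justification is loose anyway, because the nodes of a finite tree may themselves be infinite sets.
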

The reverse inequality is left as a conjecture for general $T$. Section~\ref{6} proves it for bounded convex unconditional sets.
\begin{conjecture}\label{conjecture}
    Let $(\varphi_j)_{j\in\mathbb Z}$ be the family of distances for Bernoulli processes defined in \eqref{bernoulli: family of distances}, with $d_0(s,t)=\|s-t\|_2$. Then
    $$
    \textsf{E}\sup\limits_{t\in T}\mathcal{E}_t\lesssim_r\rho_\varphi(T).$$
\end{conjecture}

\begingroup
\section{The Bernoulli case for bounded convex unconditional sets}
\label{6}

We prove Conjecture~\ref{conjecture} for bounded convex unconditional subsets of $\mathbb R^d$. The proof first treats coordinate rectangles.

In this section, $(\varphi_j)_{j\in\mathbb Z}$ is given by \eqref{bernoulli: family of distances}, and
\[
d_0(s,t)=\|s-t\|_2.
\]
Throughout this section, $j_0$ is the initial scale determined by $d_0$.
For $1\le m\le d$ and $a_1,\ldots,a_m\ge0$, let
\begin{equation}\label{eq:coordinate-rectangle}
Q(a)=\left\{x\in\mathbb R^d:|x_i|\le a_i\text{ for }1\le i\le m,
\quad x_i=0\text{ for }i>m\right\}.
\end{equation}

\begin{proposition}\label{prop:coordinate-rectangle}
Let $T\subset\mathbb R^d$. If $Q(a)\subset T$, then
\[
\sum_{i=1}^m a_i\lesssim_r\rho_\varphi(T).
\]
\end{proposition}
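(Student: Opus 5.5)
Since every separation tree on a subset of $Q(a)$ is also a separation tree on a subset of $T$, we have $\rho_\varphi(Q(a))\le\rho_\varphi(T)$. It therefore suffices to build, by hand, one separation tree $\mathcal T$ on a finite set $H\subset Q(a)$ with $\operatorname{Size}(\mathcal T)\gtrsim_r\sum_i a_i$. Theorem~\ref{m-2} cannot be used for this lower bound. Its reverse direction needs \eqref{eq:ball-arithmetic}, and the Bernoulli balls $B_j(c)$ are not convex. The construction will therefore be explicit, on vertices of the box.

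\emph{Setup.} After permuting coordinates we may assume $a_1\ge a_2\ge\dots\ge a_m>0$; zero entries can be discarded. Fix a large numerical constant $C_1$. Split $\{1,\dots,m\}$ into consecutive blocks $I_0,I_1,I_2,\dots$. The first block $I_0$ consists of the first $C_1$ indices. For $n\ge1$ the block $I_n$ has $|I_n|=C_1 2^n$, except that the last block may be shorter and is discarded. The discarded indices all follow a block of comparable length with larger entries, so they cost only a constant factor. Take $H$ to be the set of points with $x_i=\pm a_i$ for $i\le m$ and $x_i=0$ otherwise; then $H\subset Q(a)$. For $n\ge1$ let $\alpha_n:=\min_{i\in I_n}a_i$. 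Choose $j(n)\in\mathbb Z$ minimal with $2a_ir^{j(n)+2}\ge1$ for all $i\in I_n$. Then $r^{-j(n)}\ge r^{-2}\cdot 2\alpha_n/r$, that is, $r^{-j(n)}\gtrsim_r\alpha_n$. With this choice, two points of $H$ differing in $k$ coordinates of $I_n$ satisfy $\varphi_{j(n)+2}\ge k$, because each such coordinate contributes $(2a_ir^{j(n)+2})^2\wedge1=1$.

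\emph{The tree.} Level $n$ of the tree uses the coordinates of $I_n$. Below a node $A$ at depth $n-1$, the node is the set of points of $H$ with prescribed signs on $I_1\cup\dots\cup I_{n-1}$. By a Gilbert--Varshamov counting argument, for $C_1$ large the cube $\{\pm1\}^{I_n}$ contains a code with $N_n/2=2^{2^n-1}$ words and minimal Hamming distance $>2^n$. The children of $A$ are the sets of points of $A$ whose sign pattern on $I_n$ equals a given codeword. They receive the parameters $\mathbf n=n$ and $\mathbf j=j(n)$. By the remark above, the separation condition $\varphi_{j(n)+2}(B,B')>2^n$ of Definition~\ref{tree} holds, and $n$ increases strictly along branches. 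The sets are non-empty and contain at least $N_n/2$ points, provided the later blocks give enough freedom. When the blocks are exhausted, the final nodes are split into singletons with $\mathbf j=\infty$, contributing $0$. Every leaf lies in exactly one child at each level $n$. Hence the infimum in $\operatorname{Size}$ equals $r^{-j_0(H)}+\sum_n 2^nr^{-j(n)}\gtrsim_r a_1+\sum_n 2^n\alpha_n$.

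\emph{Comparison with $\sum a_i$.} Because the entries are ordered and $|I_{n+1}|=2|I_n|$, we have $\sum_{i\in I_{n+1}}a_i\le C_12^{n+1}\alpha_n$. Hence $\sum_{i\notin I_0\cup I_1}a_i\lesssim\sum_n2^n\alpha_n$. The first $C_1+2C_1$ coordinates contribute at most $3C_1a_1$. This is controlled by $r^{-j_0(H)}$: indeed $\operatorname{diam}_{d_0}(H)\ge2a_1$, so $r^{-j_0(H)}\ge 2a_1/r$. Combining these bounds gives $\sum_ia_i\lesssim_r\operatorname{Size}(\mathcal T)\le\rho_\varphi(T)$. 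The main obstacle is the bookkeeping of Definition~\ref{tree}. One must check that each non-terminal node has exactly $N_n/2$ children when it has at least $N_n/2$ points. One must also handle short or leftover blocks and the degenerate case of small $m$; this case is covered by the $r^{-j_0(H)}$ term alone. The code-existence step is standard but must be stated carefully with the constant $C_1$.
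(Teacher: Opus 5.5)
Your proof is correct and takes essentially the same route as the paper. Both build an explicit tree on sign-vertices of the box, with dyadically growing coordinate blocks of length $\asymp 2^n$ and a greedily constructed code of $N_n/2$ words with large Hamming distance. Both tie the scale $\mathbf j$ to the smallest entry of each block and compare $\sum_i a_i$ with $\sum_n 2^n\alpha_n$ by monotonicity. The only difference is cosmetic: the paper roots the tree at $T$ and controls the first block by $\sqrt{M_1}\,\|a\|_2\lesssim r^{-j_0}$, whereas you root it at the vertex set $H\subset Q(a)$ and bound the first $3C_1$ coordinates by $3C_1a_1\lesssim r^{-j_0(H)}$.
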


\begin{proof}
We may omit the zero terms and reorder the coordinates. Thus we assume
\[
a_1\ge a_2\ge\cdots\ge a_m>0.
\]
Put
\[
x=(a_1,\ldots,a_m,0,\ldots,0).
\]
Since $x,-x\in T$,
\begin{equation}\label{eq:coordinate-rectangle-l2}
2\|a\|_2=\|x-(-x)\|_2\le\operatorname{diam}_{d_0}(T)\le r^{-j_0}.
\end{equation}
The one-set tree on $T$ gives
\begin{equation}\label{eq:root-below-rho}
r^{-j_0}\le\rho_\varphi(T).
\end{equation}

Set $L=256$. For $n\ge1$, let
\[
m_n=L2^n,
\qquad
M_n=\sum_{k=1}^n m_k,
\qquad
M_0=0.
\]
Let $q$ be the largest integer such that $M_q\le m$. If no such integer exists, then $m<M_1$ and
\[
\sum_{i=1}^m a_i\le\sqrt m\,\|a\|_2
\le\sqrt{M_1}\,\|a\|_2
\lesssim r^{-j_0}\le\rho_\varphi(T).
\]
We may therefore assume that $q\ge1$.

For $1\le n\le q$, set
\[
I_n=\{M_{n-1}+1,\ldots,M_n\}.
\]
We next choose a set $\mathcal C_n\subset\{-1,1\}^{I_n}$ with $|\mathcal C_n|=N_n/2$ such that two distinct elements differ in more than $m_n/16$ coordinates.

To see that this is possible, choose sign vectors one at a time. After choosing a vector, remove all vectors which differ from it in at most $m_n/16$ coordinates. At each step, at most
\[
\sum_{k=0}^{m_n/16}\binom{m_n}{k}
\le(m_n+1)\binom{m_n}{m_n/16}
\le(m_n+1)(16e)^{m_n/16}
\]
vectors are removed. Since $m_n\ge512$,
\[
(m_n+1)(16e)^{m_n/16}<2^{m_n/2}.
\]
When the procedure stops, every sign vector has been chosen or removed. Since there are $2^{m_n}$ sign vectors, the number chosen is at least
\[
\frac{2^{m_n}}{2^{m_n/2}}=2^{m_n/2}.
\]
Also,
\[
2^{m_n/2}\ge2^{2^n-1}=N_n/2.
\]
Keeping $N_n/2$ vectors gives $\mathcal C_n$.

For
\[
\omega=(\omega_1,\ldots,\omega_q)
\in\prod_{n=1}^q\mathcal C_n,
\]
define $x^\omega\in Q(a)$ by
\[
x_i^\omega=
\begin{cases}
\omega_n(i)a_i,&i\in I_n,\ 1\le n\le q,\\
0,&i>M_q.
\end{cases}
\]
For $1\le n\le q$, let
\[
A(\omega_1,\ldots,\omega_n)
=\{x^\eta:\eta_k=\omega_k\text{ for }1\le k\le n\}.
\]
We now construct a tree with root $T$. The children of the root are the sets $A(\omega_1)$,
$\omega_1\in\mathcal C_1$. Thus the root has $N_1/2$ children. For $2\le n\le q$, the children of $A(\omega_1,\ldots,\omega_{n-1})$ are the sets $A(\omega_1,\ldots,\omega_n)$, where $\omega_n\in\mathcal C_n$. Such a set at level $n-1$ contains
\[
\prod_{k=n}^q|\mathcal C_k|\ge|\mathcal C_n|=N_n/2
\]
points, and it has exactly $N_n/2$ children. The sets at level $q$ are singletons.

For every $1\le n\le q$, choose an integer $j_n$ such that
\begin{equation}\label{eq:special-jn-choice}
a_{M_n}\le r^{-j_n}<ra_{M_n}.
\end{equation}
Give every set at level $n$ the parameters
\[
\mathbf n=n,
\qquad
\mathbf j=j_n.
\]
Let $A$ and $A'$ be distinct children at level $n$. Their sign vectors differ in more than $m_n/16$ coordinates. If $i$ is one of these coordinates, then for every $y\in A$ and $z\in A'$,
\[
|y_i-z_i|=2a_i\ge2a_{M_n}.
\]
By \eqref{eq:special-jn-choice},
\[
r^{j_n+2}|y_i-z_i|>2r>1.
\]
Thus each such coordinate contributes $1$ to $\varphi_{j_n+2}(y,z)$. It follows that
\[
\varphi_{j_n+2}(A,A')>\frac{m_n}{16}=16\,2^n>2^n.
\]
Hence this is a parameterized separation tree. Every branch contains one set at each level $1,\ldots,q$. Therefore
\begin{equation}\label{eq:special-tree-sum}
\rho_\varphi(T)
\ge r^{-j_0}+\sum_{n=1}^q2^nr^{-j_n}
\ge r^{-j_0}+\sum_{n=1}^q2^na_{M_n}.
\end{equation}

It remains to compare this sum with $\sum_i a_i$. First,
\[
\sum_{i=1}^{M_1}a_i\le\sqrt{M_1}\,\|a\|_2\lesssim r^{-j_0}.
\]
For $2\le n\le q$,
\[
\sum_{i=M_{n-1}+1}^{M_n}a_i
\le m_na_{M_{n-1}}
=2L\,2^{n-1}a_{M_{n-1}}.
\]
Since $M_{q+1}>m$, the last part has fewer than $m_{q+1}$ terms. Hence
\[
\sum_{i=M_q+1}^{m}a_i
\le m_{q+1}a_{M_q}
=2L\,2^qa_{M_q}.
\]
Consequently,
\[
\sum_{i=1}^m a_i
\lesssim r^{-j_0}+\sum_{n=1}^q2^na_{M_n}
\lesssim_r\rho_\varphi(T),
\]
where the last inequality follows from \eqref{eq:special-tree-sum}.
\end{proof}

Recall that $T\subset\mathbb R^d$ is unconditional if
\[
(t_1,\ldots,t_d)\in T
\quad\Longrightarrow\quad
(\theta_1t_1,\ldots,\theta_dt_d)\in T
\]
for every $\theta_1,\ldots,\theta_d\in\{-1,1\}$.

We next pass from rectangles to convex unconditional sets.

\begin{theorem}\label{thm:unconditional-convex-set}
Let $T\subset\mathbb R^d$ be bounded, convex, and unconditional. Then
\[
\textsf E\sup_{t\in T}\mathcal E_t\asymp_r\rho_\varphi(T).
\]
\end{theorem}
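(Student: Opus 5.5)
The direction $\rho_\varphi(T)\lesssim_r\textsf E\sup_{t\in T}\mathcal E_t$ is already available from the one-sided Bernoulli estimate of Section~\ref{5}. That estimate combines Theorem~\ref{main theorem2}, Proposition~\ref{equivalence} and Theorem~\ref{bernoulli partition}, and it holds for every $T\subset\ell^2$. So it remains to prove
\[
\textsf E\sup_{t\in T}\mathcal E_t\lesssim_r\rho_\varphi(T)
\]
for bounded convex unconditional $T$. The plan is to bound the left-hand side by a Gaussian term plus an $\ell_1$ term. The $\ell_1$ term will be controlled by a coordinate rectangle inside $T$, using Proposition~\ref{prop:coordinate-rectangle}. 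The Gaussian term will be controlled by the root term $r^{-j_0}\le\rho_\varphi(T)$ together with a chaining bound that is dominated by $\rho_\varphi(T)$.

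The key structural fact is the following. For convex unconditional $T$, the set $T$ contains the coordinate rectangle spanned by any of its points. If $t\in T$, then all sign changes $(\theta_it_i)$ lie in $T$, and their convex hull is $\{x:|x_i|\le|t_i|\}$. In particular, $Q(|t|)\subset T$ after reordering coordinates. Proposition~\ref{prop:coordinate-rectangle} therefore gives
\[
\sup_{t\in T}\|t\|_1\lesssim_r\rho_\varphi(T).
\]
This bound alone is too crude, because $\textsf E\sup\mathcal E_t=\sup_{t\in T}\|t\|_1$ is exactly the $\ell_1$ supremum, attained by taking signs. Indeed, $\sup_{t\in T}\sum_i t_i\varepsilon_i=\sup_{t\in T}\sum_i|t_i|$ for every realization of signs, by unconditionality. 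Hence
\[
\textsf E\sup_{t\in T}\mathcal E_t=\sup_{t\in T}\|t\|_1,
\]
deterministically. The supremum is finite since $T$ is bounded in $\mathbb R^d$.

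The proof then reduces to two steps. First, verify the identity above: for each sign vector $\varepsilon$, the map $t\mapsto(\varepsilon_i\operatorname{sgn}(t_i)t_i)$ keeps $T$ invariant, so
\[
\sup_{t\in T}\sum_i\varepsilon_it_i\ge\sum_i|t_i|.
\]
The reverse inequality is trivial. Second, for each $t\in T$, apply Proposition~\ref{prop:coordinate-rectangle} to $Q(a)$, where $a$ is the nonincreasing rearrangement of $(|t_i|)$. Before doing so, permute coordinates, which preserves $\varphi_j$, $d_0$ and hence $\rho_\varphi$; the supports of Proposition~\ref{prop:coordinate-rectangle} then sit in the first $m$ coordinates. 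This yields $\|t\|_1\lesssim_r\rho_\varphi(T)$ uniformly in $t$, and taking the supremum over $t\in T$ finishes the proof.

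The main point to check carefully is the inclusion $Q(|t|)\subset T$. It holds because the rectangle is the convex hull of the $2^m$ sign flips of $t$, and convexity together with unconditionality puts these in $T$. One should also confirm that the constant in Proposition~\ref{prop:coordinate-rectangle} depends only on $r$, not on $m$ or $d$, which its proof shows. No chaining is needed beyond what is already contained in that proposition.
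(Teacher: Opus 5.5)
Your argument is correct and is essentially the paper's proof. Unconditionality gives $\sup_{t\in T}\sum_i\varepsilon_it_i=\sup_{t\in T}\|t\|_1$ for every sign vector. Convexity then puts the box $Q(|t_1|,\ldots,|t_d|)$ inside $T$; with $m=d$ no permutation of coordinates is needed. Proposition~\ref{prop:coordinate-rectangle} bounds $\|t\|_1$, and the reverse direction comes from Theorem~\ref{main theorem2}, Proposition~\ref{equivalence} and Theorem~\ref{bernoulli partition}. You should delete the opening Gaussian-plus-$\ell_1$ plan and the sentence calling the rectangle bound ``too crude''. That bound is exactly what the proof needs.
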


\begin{proof}
Fix $\varepsilon=(\varepsilon_1,\ldots,\varepsilon_d)\in\{-1,1\}^d$. Since $T$ is unconditional,
\begin{equation}\label{eq:unconditional-width}
\sup_{t\in T}\sum_{i=1}^d\varepsilon_it_i
=\sup_{t\in T}\sum_{i=1}^d|t_i|.
\end{equation}
Indeed, for every $t\in T$, the point $(\varepsilon_i|t_i|)_{i\le d}$ also belongs to $T$.

Fix $t\in T$. All points
\[
(\theta_1t_1,\ldots,\theta_dt_d),
\qquad \theta_i\in\{-1,1\},
\]
belong to $T$. Their convex hull is $Q(|t_1|,\ldots,|t_d|)$. Since $T$ is convex, Proposition~\ref{prop:coordinate-rectangle} gives
\[
\sum_{i=1}^d|t_i|\lesssim_r\rho_\varphi(T).
\]
Taking the supremum over $t$ in \eqref{eq:unconditional-width}, and then the expectation, gives
\[
\textsf E\sup_{t\in T}\mathcal E_t\lesssim_r\rho_\varphi(T).
\]
The reverse inequality follows from Theorem~\ref{main theorem2}, Proposition~\ref{equivalence}, and Theorem~\ref{bernoulli partition}.
\end{proof}

For coordinate ellipsoids this gives an explicit formula.

\begin{corollary}\label{cor:coordinate-ellipsoid}
Let $d\ge1$, let $a_1,\ldots,a_d>0$, and set
\[
E_a=\left\{t\in\mathbb R^d:\sum_{i=1}^d\frac{t_i^2}{a_i^2}\le1\right\}.
\]
Then
\[
\rho_\varphi(E_a)\asymp_r\left(\sum_{i=1}^da_i^2\right)^{1/2}.
\]
\end{corollary}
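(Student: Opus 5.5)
The plan is to reduce the corollary to Theorem~\ref{thm:unconditional-convex-set} and then compute the Bernoulli width of $E_a$ directly. The set $E_a$ is bounded, convex and unconditional, so Theorem~\ref{thm:unconditional-convex-set} gives
\[
\rho_\varphi(E_a)\asymp_r\textsf E\sup_{t\in E_a}\mathcal E_t .
\]
It therefore suffices to show that $\textsf E\sup_{t\in E_a}\mathcal E_t$ is comparable to $\bigl(\sum_i a_i^2\bigr)^{1/2}$, with numerical constants.

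For a fixed sign vector $\varepsilon\in\{-1,1\}^d$, I will compute the supremum exactly. Writing $t_i=a_is_i$ with $\sum_i s_i^2\le1$, the Cauchy--Schwarz inequality gives
\[
\sup_{t\in E_a}\sum_{i=1}^d\varepsilon_it_i
=\sup_{\|s\|_2\le1}\sum_{i=1}^d\varepsilon_ia_is_i
=\Bigl(\sum_{i=1}^d\varepsilon_i^2a_i^2\Bigr)^{1/2}
=\Bigl(\sum_{i=1}^da_i^2\Bigr)^{1/2}.
\]
The right-hand side does not depend on $\varepsilon$, so the expectation equals $\|a\|_2$ exactly, and the corollary follows.

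There is essentially no obstacle in this argument. The only point to check is that $E_a$ meets the hypotheses of Theorem~\ref{thm:unconditional-convex-set}, which is immediate. As an independent check on the lower bound, one could instead use the tree side: Proposition~\ref{prop:coordinate-rectangle} applied to inscribed rectangles gives a bound with $\ell_1$-type sums, and the root term alone gives $\rho_\varphi(E_a)\ge r^{-j_0}\ge\operatorname{diam}_2(E_a)=2\max_i a_i$. Neither of these yields the full $\ell_2$ norm without optimizing over rectangles, so I would rely on the exact Bernoulli computation rather than on this route.
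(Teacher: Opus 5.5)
Your proposal is correct and is essentially the paper's proof. You invoke Theorem~\ref{thm:unconditional-convex-set} and compute each signed supremum by Cauchy--Schwarz, with equality at $t_i=a_i^2\varepsilon_i/(\sum_k a_k^2)^{1/2}$, so the Bernoulli expectation equals $(\sum_i a_i^2)^{1/2}$ exactly. Incidentally, your aside undersells the tree route: applying Proposition~\ref{prop:coordinate-rectangle} to the rectangle $Q(|t_1|,\ldots,|t_d|)$ at that maximizer already gives the full $\ell_2$ lower bound, which is how Theorem~\ref{thm:unconditional-convex-set} works internally.
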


\begin{proof}
The set $E_a$ is bounded, convex, and unconditional. For every $\varepsilon\in\{-1,1\}^d$, the Cauchy--Schwarz inequality gives
\[
\sup_{t\in E_a}\sum_{i=1}^d\varepsilon_it_i
\le\left(\sum_{i=1}^da_i^2\right)^{1/2}.
\]
Equality holds for
\[
t_i=\frac{a_i^2\varepsilon_i}{(\sum_{k=1}^da_k^2)^{1/2}}.
\]
Therefore
\[
\textsf E\sup_{t\in E_a}\mathcal E_t
=\left(\sum_{i=1}^da_i^2\right)^{1/2}.
\]
The conclusion follows from Theorem~\ref{thm:unconditional-convex-set}.
\end{proof}

\begin{remark}
The preceding corollary concerns ellipsoids whose principal axes are the coordinate axes. The argument does not apply to a general orthogonal image of such an ellipsoid. The distances in \eqref{bernoulli: family of distances} depend on the coordinates. They are not invariant under orthogonal transformations. Also, Theorem~\ref{thm:unconditional-convex-set} uses unconditionality, not only central symmetry.
\end{remark}
\endgroup

\section{The finite-set construction}\label{7}

Throughout this section $T$ is finite and non-empty, $j_0\in\mathbb Z$, and
the assumptions in the second assertion of Theorem~\ref{m-2} are satisfied.
Put
\[
 n_T:=\max\{n\ge1:N_n/2\le |T|\},
\]
with $n_T=0$ if the set on the right is empty.

Since $\varphi_{j+1}\ge\varphi_j$, the sets $2B_j(x,2^n)$ decrease with $j$.
Put
\[
\mathcal Q
:=
\left\{
q\in\mathbb Z:
\begin{array}{l}
\text{there exist }x,y\in T,\ 1\le n\le n_T\text{ such that}\\[1mm]
y\in2B_q(x,2^n),\qquad y\notin2B_{q+1}(x,2^n)
\end{array}
\right\}.
\]
For each fixed $x,y,n$, there is at most one such integer $q$, and hence
\[
 |\mathcal Q|\le |T|^2n_T.
\]
Set
\[
\mathcal J
:=
\{j_0\}
\cup
\bigcup_{q\in\mathcal Q}
\{q-2,q-1,q,q+1,q+2\}.
\]
Then
\begin{equation}\label{eq:active-scale-count}
 |\mathcal J|\le1+5|T|^2n_T.
\end{equation}
Finally, put
\[
 J_+:=\max\mathcal J+2.
\]

\subsection{A finite recursion}

For $n\ge1$, $j\in\mathbb Z$, and $u,x\in T$, put
\[
 D_{n,j}(u):=2B_j(u,2^n)\cap T,
 \qquad
 Q_{n,j}(x):=9B_{j+2}(x,2^{n+\kappa})\cap T.
\]
Assume that $f_{n+\kappa,j+2}$ has already been defined, and set
$m=N_n/2$. Choose $x_1\in D_{n,j}(u)$ so that
\[
 f_{n+\kappa,j+2}(x_1)
 =\max_{x\in D_{n,j}(u)}f_{n+\kappa,j+2}(x).
\]
Suppose that $x_1,\ldots,x_\ell$ have been chosen, where $1\le\ell<m$.
If
\[
 D_{n,j}(u)\subset\bigcup_{i=1}^{\ell}Q_{n,j}(x_i),
\]
stop and put $A_{n,j}(u)=0$. Otherwise choose
\[
 x_{\ell+1}\in
 D_{n,j}(u)\setminus\bigcup_{i=1}^{\ell}Q_{n,j}(x_i)
\]
so that $f_{n+\kappa,j+2}(x_{\ell+1})$ is maximal on this set. If the
construction reaches $m$ points, define
\begin{equation}\label{eq:algorithm-packing-value}
 A_{n,j}(u):=2^nr^{-j}+f_{n+\kappa,j+2}(x_m).
\end{equation}
For $n=0$ or $n>n_T$, set $A_{n,j}=0$.

For pairs $(n,j)$ write
\begin{equation}\label{eq:algorithm-order}
 (n,j)\prec(n',j')
 \quad\Longleftrightarrow\quad
 j'>j,
 \qquad
 0\le n'-n\le\kappa(j'-j).
\end{equation}
Set
\[
 f_{n,j}=0
 \qquad\text{if }n>n_T\text{ or }j>J_+.
\]
For $0\le n\le n_T$ and $j_0\le j\le J_+$, define the remaining functions,
in decreasing order of $j$, by
\begin{equation}\label{eq:algorithm-recursion}
 f_{n,j}(u)
 :=\max\left\{
 A_{n,j}(u),
 \max_{\substack{(n,j)\prec(n',j')\\
                  0\le n'\le n_T,\ j<j'\le J_+}}
 f_{n',j'}(u)
 \right\},
\end{equation}
where the second maximum is understood to be zero if the set of indices is
empty. Thus the recursion is finite.

\begin{proposition}\label{prop:algorithm-growth}
The functions defined by \eqref{eq:algorithm-recursion} satisfy
\eqref{eq:point-cone-monotonicity} and the growth condition for points in
Definition~\ref{groth con for point}.
\end{proposition}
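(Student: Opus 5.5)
The plan is to check the two properties directly from the recursion \eqref{eq:algorithm-recursion}. For the growth condition, the greedy packing defining $A_{n,j}(u)$ does the work when $j\le J_+$. The case $j>J_+$ is handled separately by showing that no separated family of the required kind exists there.

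\emph{Monotonicity \eqref{eq:point-cone-monotonicity}.} Let $(n,j)\prec(n',j')$ with $j\ge j_0$. If $n'>n_T$ or $j'>J_+$, then $f_{n',j'}=0$ by convention and there is nothing to prove. Otherwise $n\le n'\le n_T$ and $j<j'\le J_+$. In this case $(n',j')$ is one of the indices over which the inner maximum in \eqref{eq:algorithm-recursion} is taken for $f_{n,j}(u)$. Since the recursion is carried out in decreasing order of $j$, $f_{n',j'}$ is already defined at that point, and hence $f_{n,j}(u)\ge f_{n',j'}(u)$. The only bookkeeping needed is to confirm that every pair with $0\le n\le n_T$ and $j_0\le j\le J_+$ is actually reached by the recursion, and that all other pairs are covered by the zero conventions.

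\emph{Growth condition, case $j\le J_+$.} Fix $n\ge1$, $j\ge j_0$, $u\in T$, and points $t_1,\dots,t_m\in 2B_j(u,2^n)\cap T=D_{n,j}(u)$ with $m=N_n/2$ and $\varphi_{j+1}(t_\ell,t_{\ell'})>2^{n+1}$ for $\ell\ne\ell'$. The $t_\ell$ are distinct, so $N_n/2\le|T|$ and therefore $n\le n_T$. By Lemma~\ref{lem:ball-calculus}(v), each set $Q_{n,j}(x_i)=9B_{j+2}(x_i,2^{n+\kappa})\cap T$ contains at most one of the $t_\ell$. Consequently, for every $\ell<m$, the sets $Q_{n,j}(x_1),\dots,Q_{n,j}(x_\ell)$ miss at least one $t_{\ell'}$. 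This has two consequences.
\begin{enumerate}[label=(\alph*)]
\item The greedy construction never stops early, so it produces $m$ points and $A_{n,j}(u)$ is given by \eqref{eq:algorithm-packing-value}.
\item At the last step, some $t_{\ell'}$ lies in $D_{n,j}(u)\setminus\bigcup_{i<m}Q_{n,j}(x_i)$. Since $x_m$ maximizes $f_{n+\kappa,j+2}$ on this set, $f_{n+\kappa,j+2}(x_m)\ge f_{n+\kappa,j+2}(t_{\ell'})\ge\min_\ell f_{n+\kappa,j+2}(t_\ell)$.
\end{enumerate}
Hence
\[
f_{n,j}(u)\ge A_{n,j}(u)\ge 2^nr^{-j}+\min_{1\le\ell\le m}f_{n+\kappa,j+2}(t_\ell),
\]
where the first inequality is part of the definition \eqref{eq:algorithm-recursion}.

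\emph{Growth condition, case $j>J_+$.} This is where I expect the main difficulty, because here $f_{n,j}=0$ and the inequality can only hold if the hypothesis is vacuous. So I would show that no such separated family exists. Since $j>J_+>\max\mathcal Q$, neither $j$ nor $j+1$ belongs to $\mathcal Q$. By the definition of $\mathcal Q$, for $1\le n\le n_T$ the membership $y\in 2B_q(x,2^n)$ therefore cannot change between $q=j$ and $q=j+1$, nor between $q=j+1$ and $q=j+2$. It follows that $t_\ell\in 2B_{j+2}(u,2^n)$ for every $\ell$. Translation invariance and \eqref{eq:ball-arithmetic} then give
\[
t_\ell-t_{\ell'}\in 4B_{j+2}(2^n)\subset 4B_{j+2}(2^{n+\kappa}).
\]
By \eqref{mild condition} applied at index $j+1$, this set is contained in $\tfrac15B_{j+1}(2^n)$. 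By \eqref{eq:radial-stability-derived}, $\tfrac15B_{j+1}(2^n)\subset B_{j+1}(2^n)\subset B_{j+1}(2^{n+1})$. Thus $\varphi_{j+1}(t_\ell,t_{\ell'})\le 2^{n+1}$, which contradicts the assumed separation.

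The delicate points in this last case are two. First, the stabilization step must use exactly the indices excluded from $\mathcal Q$; the margin of $2$ built into $J_+$ is what makes this work. Second, one must check that $n\le n_T$ in this case as well, which again follows from the distinctness of the $t_\ell$.
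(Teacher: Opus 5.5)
Your proof is correct and follows essentially the same route as the paper. Monotonicity is read off from the recursion. The growth condition comes from the fact that each $Q_{n,j}(x_i)$ contains at most one $t_\ell$, so the greedy construction reaches $m$ points and $x_m$ dominates some $t_\ell$. The only difference is cosmetic: the paper shows directly that a separated family forces $j\in\mathcal J$ (hence $j<J_+$), while you run the same stabilization argument contrapositively to show that the case $j>J_+$ is vacuous.
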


\begin{proof}
Condition~\eqref{eq:point-cone-monotonicity} follows directly from
\eqref{eq:algorithm-recursion}. Let $n\ge1$, $m=N_n/2$, and suppose that
$t_1,\ldots,t_m\in2B_j(u,2^n)$ satisfy
\[
 \varphi_{j+1}(t_\ell,t_{\ell'})>2^{n+1}
 \qquad(\ell\ne\ell').
\]
The points are distinct, so $n\le n_T$. We also have $j\le J_+$. Indeed,
Lemma~\ref{lem:ball-calculus}(v) gives
\[
 t_2\notin Q_{n,j}(t_1).
\]
Since $t_1,t_2\in2B_j(u,2^n)$, they cannot both belong to
$2B_{j+2}(u,2^n)$; otherwise
\[
 t_1-t_2\in4B_{j+2}(2^n)
 \subset9B_{j+2}(2^{n+\kappa}),
\]
contrary to the preceding display. Hence, for one of the two points, say
$t_\ell$,
\[
 t_\ell\in2B_j(u,2^n),
 \qquad
 t_\ell\notin2B_{j+2}(u,2^n).
\]
Since the sets $2B_q(u,2^n)$ decrease with $q$, either $j\in\mathcal Q$ or
$j+1\in\mathcal Q$. Thus $j\in\mathcal J$, and in particular $j<J_+$.

By Lemma~\ref{lem:ball-calculus}(v), every set $Q_{n,j}(x)$ contains at most
one of the points $t_1,\ldots,t_m$. Hence the construction yields $m$ points
$x_1,\ldots,x_m$. Before $x_m$ is chosen, at least one of the points
$t_\ell$ belongs to
\[
 D_{n,j}(u)\setminus\bigcup_{i<m}Q_{n,j}(x_i).
\]
By the choice of $x_m$,
\[
 f_{n+\kappa,j+2}(x_m)
 \ge\min_{1\le\ell\le m}f_{n+\kappa,j+2}(t_\ell).
\]
Equations~\eqref{eq:algorithm-packing-value} and
\eqref{eq:algorithm-recursion} therefore give
\[
 f_{n,j}(u)
 \ge2^nr^{-j}
 +\min_{1\le\ell\le m}f_{n+\kappa,j+2}(t_\ell),
\]
which is the required inequality.
\end{proof}

\begin{theorem}\label{thm:algorithm-upper}
For the functions in \eqref{eq:algorithm-recursion},
\[
 \gamma_\varphi(T)
 \lesssim_r r^{-j_0}
 +\sup_{u\in T,\,n\ge0,\,j\ge j_0}f_{n,j}(u).
\]
\end{theorem}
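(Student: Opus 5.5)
The plan is to apply Theorem~\ref{Talagrand'} directly to the family $(f_{n,j})$ defined by \eqref{eq:algorithm-recursion}. That theorem bounds $\gamma_\varphi(T)$ by $r^{-j_0}+F_*$ with $F_*=\sup_{u,n,j\ge j_0}f_{n,j}(u)$, which is exactly the right-hand side of the assertion. So it suffices to check its hypotheses for this family. The standing assumptions of Section~\ref{7} include \eqref{eq:initial-distance-compatibility}, \eqref{eq:dj(s,t)=dj(t,s)}, \eqref{eq:dj<dj+dj}, \eqref{eq:ball-arithmetic} and $j_0\in\mathbb Z$. Proposition~\ref{prop:algorithm-growth} supplies the remaining two hypotheses: the monotonicity \eqref{eq:point-cone-monotonicity} and the growth condition for points of Definition~\ref{groth con for point}.

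First I will make sure the family is indexed as Theorem~\ref{Talagrand'} requires, namely by all $n\ge0$ and $j\ge j_0$, and that all values are non-negative. Outside the finite range $0\le n\le n_T$, $j_0\le j\le J_+$, the functions are set to $0$. Inside it they are maxima of non-negative quantities, since each $A_{n,j}\ge0$. So every $f_{n,j}$ is a well-defined non-negative function on $T$. The recursion in decreasing $j$ is well founded because every pair $(n',j')$ with $(n,j)\prec(n',j')$ has $j'>j$. The same holds for $f_{n+\kappa,j+2}$, which is used in defining $A_{n,j}$.

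The one point needing care is the monotonicity \eqref{eq:point-cone-monotonicity} across the boundary where the functions are set to zero. Suppose $j'>j\ge j_0$ and $0\le n'-n\le\kappa(j'-j)$. If $n'>n_T$ or $j'>J_+$, then $f_{n',j'}=0$ and the inequality is trivial. Otherwise $n\le n'\le n_T$ and $j<j'\le J_+$, so $f_{n',j'}(u)$ appears in the inner maximum of \eqref{eq:algorithm-recursion} defining $f_{n,j}(u)$. This is what Proposition~\ref{prop:algorithm-growth} records, and I will simply cite it.

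With both hypotheses confirmed, Theorem~\ref{Talagrand'} gives the bound, and since $T$ is finite the supremum $F_*$ is a maximum over finitely many nonzero values. I do not expect a genuine obstacle. The growth verification, including the fact that an active scale $j$ lies in $\mathcal J$ and below $J_+$, is already contained in Proposition~\ref{prop:algorithm-growth}.
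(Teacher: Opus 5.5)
Your proposal is correct and follows the paper's proof, which obtains the bound by combining Theorem~\ref{Talagrand'} with Proposition~\ref{prop:algorithm-growth} under the standing assumptions of Section~\ref{7}. Your extra checks are sound housekeeping that the paper leaves implicit: non-negativity of the $f_{n,j}$, well-foundedness of the recursion in decreasing $j$, and monotonicity across the region where the functions are set to zero.
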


\begin{proof}
This follows from Theorem~\ref{Talagrand'} and
Proposition~\ref{prop:algorithm-growth}.
\end{proof}

\subsection{Construction of a separation tree}

We next prove the reverse estimate.

\begin{theorem}\label{thm:algorithm-tree-certificate}
For the functions in \eqref{eq:algorithm-recursion},
\[
 r^{-j_0}
 +\sup_{u\in T,\,n\ge0,\,j\ge j_0}f_{n,j}(u)
 \le\rho_\varphi(T).
\]
Consequently,
\[
 r^{-j_0}+\sup_{u,n,j}f_{n,j}(u)
 \asymp_r\gamma_\varphi(T).
\]
\end{theorem}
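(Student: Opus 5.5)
The plan is to show, by induction along the order in which the recursion \eqref{eq:algorithm-recursion} is evaluated (decreasing $j$), that every value $f_{n,j}(u)$ is certified by a separation tree. Concretely, I aim for the following claim. For every $u\in T$, $0\le n\le n_T$ and $j_0\le j\le J_+$ with $f_{n,j}(u)>0$, there are a non-empty set $H\subset 4B_j(u,2^n)\cap T$ and a separation tree $\mathcal T$ on $H$ such that:
\begin{itemize}
\item all non-root parameters satisfy $\mathbf n\ge n$ and $\mathbf j\ge j$;
\item the inequality
\[
\inf_{t\in S_{\mathcal T}}\sum_{A\in\mathcal T\setminus\{H\},\,t\in A}2^{\mathbf n(A)}r^{-\mathbf j(A)}\ge f_{n,j}(u)
\]
holds.
\end{itemize}
Given the claim, $\operatorname{Size}(\mathcal T)\ge r^{-j_0(H)}+f_{n,j}(u)\ge r^{-j_0}+f_{n,j}(u)$, since $j_0(H)\ge j_0$. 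Taking suprema gives the first inequality. The equivalence then follows from Theorem~\ref{thm:algorithm-upper} together with $\rho_\varphi\asymp_r\gamma_\varphi$ from Theorem~\ref{m-2}.

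The induction has two cases, according to which term attains the maximum in \eqref{eq:algorithm-recursion}.

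\emph{Second term.} If the maximum is $f_{n',j'}(u)$ with $(n,j)\prec(n',j')$, we reuse the tree for $(n',j')$. Its parameters satisfy $\mathbf n\ge n'\ge n$ and $\mathbf j\ge j'>j$. Its root lies in $4B_{j'}(u,2^{n'})\subset4B_j(u,2^n)$ by Lemma~\ref{lem:ball-calculus}(iii), applied with $d=j'-j$. So the claim transfers directly.

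\emph{First term.} If the maximum is $A_{n,j}(u)>0$, the greedy construction produced points $x_1,\dots,x_m\in D_{n,j}(u)$ with $m=N_n/2$ and $x_{\ell+1}\notin Q_{n,j}(x_i)$ for $i\le\ell$. By the greedy choice of maxima, $f_{n+\kappa,j+2}(x_\ell)\ge f_{n+\kappa,j+2}(x_m)$ for every $\ell$.
\begin{itemize}
\item If $f_{n+\kappa,j+2}(x_\ell)>0$, the inductive hypothesis gives a tree $\mathcal T_\ell$ on $H_\ell\subset4B_{j+2}(x_\ell,2^{n+\kappa})$ with parameters at least $(n+\kappa,j+2)$ and value at least $f_{n+\kappa,j+2}(x_\ell)$.
\item Otherwise we take $H_\ell=\{x_\ell\}$ with the trivial one-node tree.
\end{itemize}
We then join the trees exactly as in the proof of Theorem~\ref{thm:upper_bound}. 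Put $H=\bigcup_\ell H_\ell$, make the $H_\ell$ the children of $H$ with parameters $\mathbf n=n$, $\mathbf j=j$, and hang $\mathcal T_\ell$ below $H_\ell$. The separation condition $\varphi_{j+2}(H_\ell,H_{\ell'})>2^n$ holds by Lemma~\ref{lem:ball-calculus}(iv): the points are chosen so that $x_\ell\notin9B_{j+2}(x_{\ell'},2^{n+\kappa})$ for $\ell\ne\ell'$, using symmetry of the condition, which follows from the symmetry of balls. This also makes the $H_\ell$ disjoint. The cardinality $|H|\ge N_n/2$ holds, and the descendants have $\mathbf n\ge n+\kappa>n$. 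Each branch then receives $2^nr^{-j}+f_{n+\kappa,j+2}(x_m)=A_{n,j}(u)$. Finally, $H\subset4B_j(u,2^n)$ by Lemma~\ref{lem:ball-calculus}(vi), since $x_\ell\in2B_j(u,2^n)$.

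The step I expect to be the main obstacle is the bookkeeping in the definition of a separation tree for the trivial subtrees and for small children. Suppose some $H_\ell$ is a singleton, or has fewer than $N_{n'}/2$ points at its next level. The definition then forces children to be singletons with $j=\infty$. So one must check that a node with no children is allowed (terminal nodes carry no condition) and that the infimum over $S_{\mathcal T}$ is still controlled. The fix I would try is to let nodes with zero contribution simply be terminal; the definition permits this, since no condition is imposed on terminal nodes. One also has to confirm the root requirement "if $|A|\ge N_n/2$ then exactly $N_n/2$ children", which holds by construction since we have exactly $m$ children.
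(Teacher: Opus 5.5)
Your induction is sound. It is a bottom-up version of the paper's argument: the paper starts from a maximizing triple, chases the second term of \eqref{eq:algorithm-recursion} to an active pair, and builds the tree top-down with children $4B_{j'+2}(x'_q,2^{n'+\kappa})\cap T$. You instead certify every positive $f_{n,j}(u)$ by joining subtrees as in the proof of Theorem~\ref{thm:upper_bound}. Both routes rely on the same parts (iii), (iv), (vi) of Lemma~\ref{lem:ball-calculus}.

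\textbf{The gap is in the final step.} From $H\subset T$ you get $\operatorname{diam}_{d_0}(H)\le\operatorname{diam}_{d_0}(T)$, hence $j_0(H)\ge j_0$. This means $r^{-j_0(H)}\le r^{-j_0}$, not $\ge$. So $\operatorname{Size}(\mathcal T)\ge r^{-j_0(H)}+f_{n,j}(u)$ does not give $r^{-j_0}+f_{n,j}(u)\le\rho_\varphi(T)$. The root term of a tree on a small set $H$ can be far smaller than $r^{-j_0}$, for instance when $H$ lies in a small ball inside a large $T$.

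\textbf{The fix is to re-root at $T$;} this is why the paper takes $T$ as the root from the start.
\begin{itemize}
\item If $f_{n,j}(u)>0$, your root $H$ cannot be terminal, since then all branch sums would be empty and the infimum would be $0$.
\item In your construction, $H$ therefore has exactly $N_{n''}/2$ pairwise separated children, with common parameters $(n'',j'')$, $n''\ge1$, $j''\in\mathbb Z$.
\item Put $\mathcal T':=(\mathcal T\setminus\{H\})\cup\{T\}$. This is a tree on $T$ whose root has precisely the former children of $H$ as its children.
\item Since $|T|\ge|H|\ge N_{n''}/2$, every requirement of Definition~\ref{tree} still holds.
\item Terminal nodes and branch sums are unchanged, so $\operatorname{Size}(\mathcal T')\ge r^{-j_0}+f_{n,j}(u)$.
\end{itemize}
You should also add the one-set tree on $T$ to cover the case $\sup f_{n,j}=0$, which your claim does not reach.

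Without re-rooting, comparing with the one-set tree only gives
$\rho_\varphi(T)\ge\max\{r^{-j_0},\sup f_{n,j}(u)\}\ge\frac12\bigl(r^{-j_0}+\sup f_{n,j}(u)\bigr)$. That suffices for the $\asymp_r$ consequence, but not for the inequality with constant one asserted in the statement.
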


\begin{proof}
Put
\[
 F_*:=\sup_{u\in T,\,n\ge0,\,j\ge j_0}f_{n,j}(u).
\]
If $F_*=0$, the one-set tree on $T$ gives the first assertion. Assume
$F_*>0$. Only finitely many of the functions $f_{n,j}$ are non-zero, so
there exist $n^*,j^*$ and $u^*\in T$ such that
\[
 F_*=f_{n^*,j^*}(u^*).
\]
If the second term in \eqref{eq:algorithm-recursion} gives this value, choose
$(n',j')$ with $(n^*,j^*)\prec(n',j')$ and
\[
 f_{n^*,j^*}(u^*)=f_{n',j'}(u^*),
\]
and replace $(n^*,j^*)$ by $(n',j')$. Since $j'$ strictly increases, after
finitely many steps we obtain, with the same notation,
\begin{equation}\label{eq:algorithm-active-max}
 f_{n^*,j^*}(u^*)=A_{n^*,j^*}(u^*)>0.
\end{equation}
In particular $n^*\ge1$.

Run the construction defining $A_{n^*,j^*}(u^*)$ and put
$m=N_{n^*}/2$. It gives points $x_1,\ldots,x_m$ such that
\[
 x_q\notin Q_{n^*,j^*}(x_i)
 \qquad(i<q).
\]
By the choice of the $x_i$,
\[
 f_{n^*+\kappa,j^*+2}(x_1)
 \ge\cdots\ge
 f_{n^*+\kappa,j^*+2}(x_m),
\]
and hence
\begin{equation}\label{eq:algorithm-split-identity}
 f_{n^*,j^*}(u^*)
 =2^{n^*}r^{-j^*}
 +\min_{1\le i\le m}f_{n^*+\kappa,j^*+2}(x_i).
\end{equation}
For $1\le i\le m$, put
\[
 A_i:=4B_{j^*+2}(x_i,2^{n^*+\kappa})\cap T,
 \qquad
 \mathbf n(A_i):=n^*,
 \qquad
 \mathbf j(A_i):=j^*.
\]
Each $A_i$ is non-empty, and Lemma~\ref{lem:ball-calculus}(iv) gives
\[
 \varphi_{j^*+2}(A_i,A_{i'})>2^{n^*}
 \qquad(i\ne i').
\]
Thus the sets $A_i$ are pairwise disjoint. Since there are $N_{n^*}/2$ of
them, take $T$ as the root and the sets $A_i$ as its children. Put
\[
 g(T):=F_*,
 \qquad
 g(A_i):=f_{n^*+\kappa,j^*+2}(x_i).
\]

We continue below each $A_i$. If $g(A_i)=0$, leave $A_i$ terminal. Otherwise,
starting with $(n^*+\kappa,j^*+2)$, repeat the preceding choice of later pairs
until indices $(n',j')$ are obtained such that
\[
 f_{n^*+\kappa,j^*+2}(x_i)
 =f_{n',j'}(x_i)
 =A_{n',j'}(x_i)>0.
\]
By transitivity of \eqref{eq:algorithm-order},
\begin{equation}\label{eq:algorithm-descendant-cone}
 j'\ge j^*+2,
 \qquad
 0\le n'-(n^*+\kappa)
 \le\kappa\bigl(j'-(j^*+2)\bigr).
\end{equation}
Run the construction defining $A_{n',j'}(x_i)$, with centers
$x'_1,\ldots,x'_{N_{n'}/2}$, and put
\[
 A'_q:=4B_{j'+2}(x'_q,2^{n'+\kappa})\cap T.
\]
Since $x'_q\in2B_{j'}(x_i,2^{n'})$, Lemma~\ref{lem:ball-calculus}(vi) gives
\[
 A'_q\subset4B_{j'}(x_i,2^{n'})\cap T.
\]
By Lemma~\ref{lem:ball-calculus}(iii) and
\eqref{eq:algorithm-descendant-cone},
\[
 4B_{j'}(x_i,2^{n'})
 \subset4B_{j^*+2}(x_i,2^{n^*+\kappa}),
\]
so $A'_q\subset A_i$. Give these sets the parameters
\[
 \mathbf n(A'_q):=n',
 \qquad
 \mathbf j(A'_q):=j',
\]
and put
\[
 g(A'_q):=f_{n'+\kappa,j'+2}(x'_q).
\]
Lemma~\ref{lem:ball-calculus}(iv) gives
\[
 \varphi_{j'+2}(A'_q,A'_{q'})>2^{n'}
 \qquad(q\ne q').
\]
Moreover, $n'\ge n^*+\kappa>n^*$. Thus the parameters $\mathbf n$ increase
along branches. The sets $A'_q$ are non-empty, pairwise disjoint, and
contained in $A_i$; hence $|A_i|\ge N_{n'}/2$, and $A_i$ has exactly
$N_{n'}/2$ children.

Repeat the construction below every non-terminal node. At each step the
$\mathbf j$-parameter increases by at least two, while all indices occurring
in the recursion are at most $J_+$. Hence the construction stops after
finitely many steps and gives a separation tree $\mathcal T$ on $T$.

By construction, $g(A)=0$ for every terminal node $A$. If $A$ is
non-terminal and $B_1,\ldots,B_m$ are its children, the corresponding
identity \eqref{eq:algorithm-split-identity} gives
\[
 g(A)
 =2^{\mathbf n(B_1)}r^{-\mathbf j(B_1)}
 +\min_{1\le i\le m}g(B_i),
\]
since all children of $A$ have the same parameters. It follows by induction
on the height of the tree that, for every $t\in S_{\mathcal T}$,
\[
 \sum_{\substack{A\in\mathcal T\setminus\{T\}\\t\in A}}
 2^{\mathbf n(A)}r^{-\mathbf j(A)}
 \ge F_*.
\]
Therefore
\[
 r^{-j_0}+F_*
 \le\operatorname{Size}(\mathcal T)
 \le\rho_\varphi(T).
\]
This proves the first assertion. The second follows from
Theorem~\ref{thm:algorithm-upper} and Theorem~\ref{m-2}.
\end{proof}

\subsection{Finitely many scales}

\begin{lemma}\label{lem:active-packing-scales}
If $A_{n,j}(u)>0$, then
\[
 j\in\mathcal J
 \qquad\text{and}\qquad
 j+2\in\mathcal J.
\]
\end{lemma}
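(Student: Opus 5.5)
The plan is to unwind what $A_{n,j}(u)>0$ forces. By construction, $A_{n,j}(u)>0$ happens only when $1\le n\le n_T$ and the greedy procedure reached $m=N_n/2\ge 2$ points. So there are $x_1,x_2\in D_{n,j}(u)=2B_j(u,2^n)\cap T$ with
\[
x_2\notin Q_{n,j}(x_1)=9B_{j+2}(x_1,2^{n+\kappa})\cap T.
\]
From these two points I will extract elements of $\mathcal Q$ close to $j$ and to $j+2$. Since $\mathcal J$ contains $q-2,\ldots,q+2$ for each $q\in\mathcal Q$, it suffices to find $q\in\mathcal Q$ with $|q-j|\le2$ and $q'\in\mathcal Q$ with $|q'-(j+2)|\le 2$.

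First step: show that $x_1,x_2$ cannot both lie in $2B_{j+2}(u,2^n)$. This is exactly the argument in the proof of Proposition~\ref{prop:algorithm-growth}. If both did, then by \eqref{eq:ball-arithmetic}
\[
x_1-x_2\in 4B_{j+2}(2^n)\subset 9B_{j+2}(2^{n+\kappa}),
\]
which contradicts $x_2\notin Q_{n,j}(x_1)$. So for some $x\in\{x_1,x_2\}$ we have $x\in 2B_j(u,2^n)$ and $x\notin 2B_{j+2}(u,2^n)$. The sets $2B_q(u,2^n)$ decrease in $q$, so the largest $q$ with $x\in2B_q(u,2^n)$ lies in $\{j,j+1\}$. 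That $q$ belongs to $\mathcal Q$, witnessed by the points $u,x$ and the level $n\le n_T$. Hence $j\in\{q,q-1\}\subset\mathcal J$.

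Second step: since $q\in\{j,j+1\}$, we get $j+2\in\{q+2,q+1\}\subset\mathcal J$. Both memberships therefore follow from the single element $q$. This is why $\mathcal J$ was padded by $\pm2$.

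The only point needing care is the existence of the "largest $q$", i.e.\ that $x\in2B_q(u,2^n)$ fails for large $q$. This is guaranteed, since it already fails at $q=j+2$, and monotonicity in $j$ from \eqref{eq:dj(s,t)=dj(t,s)} gives the nesting. I expect no real obstacle beyond checking that $n\ge1$ and $m\ge2$ hold whenever $A_{n,j}(u)>0$, so that two greedy points exist.
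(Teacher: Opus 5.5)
Your proposal is correct and matches the paper's proof: two greedy points in $2B_j(u,2^n)$ cannot both lie in $2B_{j+2}(u,2^n)$ by \eqref{eq:ball-arithmetic}, so nesting of the balls gives an element of $\mathcal Q$ in $\{j,j+1\}$, and the $\pm2$ padding of $\mathcal J$ then yields $j,j+2\in\mathcal J$. Your explicit ``largest $q$'' step just spells out the paper's remark that either $j\in\mathcal Q$ or $j+1\in\mathcal Q$.
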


\begin{proof}
If $A_{n,j}(u)>0$, then $n\ge1$, and the construction gives at least two
points $x_1,x_2\in2B_j(u,2^n)$ such that
\[
 x_2\notin9B_{j+2}(x_1,2^{n+\kappa}).
\]
If both $x_1$ and $x_2$ belonged to $2B_{j+2}(u,2^n)$, then
\eqref{eq:ball-arithmetic} would give
\[
 x_1-x_2\in4B_{j+2}(2^n)
 \subset9B_{j+2}(2^{n+\kappa}),
\]
a contradiction. Hence, for one of the two points, say $x_i$,
\[
 x_i\in2B_j(u,2^n),
 \qquad
 x_i\notin2B_{j+2}(u,2^n).
\]
Since the sets $2B_q(u,2^n)$ decrease with $q$, either $j\in\mathcal Q$ or
$j+1\in\mathcal Q$. The definition of $\mathcal J$ then gives both
$j\in\mathcal J$ and $j+2\in\mathcal J$.
\end{proof}

\begin{theorem}\label{thm:finite-algorithm}
Assume, in addition, that for given $x,y\in T$, $j\in\mathbb Z$, and
$1\le n\le n_T$, the relations
\[
 y\in2B_j(x,2^n),
 \qquad
 y\in9B_{j+2}(x,2^{n+\kappa})
\]
can be determined in time polynomial in $|T|$. Assume also that whether
\[
 \{j\in\mathbb Z:y\in2B_j(x,2^n)\}
\]
has a largest element and, if so, that element can be determined in time
polynomial in $|T|$. Then
\[
 V(T):=r^{-j_0}
 +\sup_{u\in T,\,n\ge0,\,j\ge j_0}f_{n,j}(u)
\]
can be computed in time polynomial in $|T|$. Moreover,
\[
 V(T)\asymp_r\gamma_\varphi(T)
 \asymp_r\mathcal M_\varphi(T)
 \asymp_r\rho_\varphi(T).
\]
\end{theorem}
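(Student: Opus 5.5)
The equivalences $V(T)\asymp_r\gamma_\varphi(T)\asymp_r\mathcal M_\varphi(T)\asymp_r\rho_\varphi(T)$ need no new work. Theorem~\ref{thm:algorithm-upper} gives $\gamma_\varphi(T)\lesssim_r V(T)$, and Theorem~\ref{thm:algorithm-tree-certificate} gives $V(T)\le\rho_\varphi(T)$. Theorem~\ref{m-2} closes the chain under the standing assumptions of Section~\ref{7}. So the content of the statement is the complexity claim. The plan is to show that the recursion \eqref{eq:algorithm-recursion} only has to be evaluated on a polynomial index set, and that each evaluation costs polynomial time.

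\textbf{Step 1: compute $\mathcal J$.} First, $n_T\le\log_2\log_2(2|T|)+1$, so it is computed trivially. Next, for each triple $(x,y,n)$ with $1\le n\le n_T$, the oracle decides whether $\{j:y\in 2B_j(x,2^n)\}$ has a largest element $q$. If it does, we must check that $q$ really belongs to $\mathcal Q$, that is, that the set is non-empty. The sets $2B_q(x,2^n)$ decrease in $q$, so the set is an interval unbounded below, and the largest element is exactly the unique $q$ with $y\in 2B_q$ and $y\notin 2B_{q+1}$. This produces $\mathcal Q$ with at most $|T|^2 n_T$ queries. We then form $\mathcal J$, and by \eqref{eq:active-scale-count} $|\mathcal J|\le 1+5|T|^2n_T$. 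Finally $j_0$ is given and $J_+=\max\mathcal J+2$.

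\textbf{Step 2: restrict the recursion to active scales.} The difficulty is that $J_+-j_0$ need not be polynomial in $|T|$, so we cannot loop over all integers $j\in[j_0,J_+]$. By Lemma~\ref{lem:active-packing-scales}, $A_{n,j}\equiv0$ unless $j\in\mathcal J$. Unwinding \eqref{eq:algorithm-recursion}, each value $f_{n,j}(u)$ equals either $0$ or $A_{n',j'}(u)$ for some pair $(n',j')$ with $(n,j)\preceq(n',j')$ and $j'\in\mathcal J$. Hence
\[
f_{n,j}(u)=\max\Bigl\{A_{n',j'}(u):(n',j')=(n,j)\text{ or }(n,j)\prec(n',j'),\ j'\in\mathcal J\Bigr\}\vee0.
\]
This is the step I expect to be the main obstacle. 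One has to check that the closure property survives the truncations at $n_T$ and $J_+$, and that $\prec$ is transitive, which holds since the cone condition adds. Once that is done, $V(T)$ is the maximum over $u\in T$ and $(n,j)$ with $j\in\mathcal J$ only: the two sups agree because any pair is dominated by an active pair above it. For the packing step, we need $f_{n+\kappa,j+2}$ only at $j\in\mathcal J$. By the display, these values are again maxima of $A$'s at active scales, and $j+2\in\mathcal J$ by the lemma whenever the value matters.

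\textbf{Step 3: complexity count.} We process the active pairs $(n,j)\in\{0,\ldots,n_T\}\times\mathcal J$ in decreasing order of $j$, storing $A_{n,j}(u)$ and $f_{n,j}(u)$ for all $u\in T$. For a fixed $(n,j,u)$, the greedy construction of $A_{n,j}(u)$ does the following:
\begin{itemize}
\item it computes $D_{n,j}(u)$ with $|T|$ membership queries;
\item it runs at most $m\le|T|$ rounds, and each round tests membership in $Q_{n,j}(x_i)$ with at most $|T|^2$ queries;
\item it takes a maximum of already stored values $f_{n+\kappa,j+2}$, or $0$ when $n+\kappa>n_T$.
\end{itemize}
The maximum over dominating pairs in Step 2 costs at most $|\mathcal J|(n_T+1)$ comparisons. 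Multiplying these counts gives a polynomial in $|T|$ number of oracle calls and arithmetic operations on quantities of the form $2^nr^{-j}$. The taking of maxima is exact in the stored representation (integer $j$, $n$), so polynomial time follows.
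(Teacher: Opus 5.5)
Your proposal is correct and follows essentially the same route as the paper. Both compute $\mathcal Q$ and $\mathcal J$ by querying all triples $(x,y,n)$, use Lemma~\ref{lem:active-packing-scales} and the transitivity of $\prec$ to restrict the recursion \eqref{eq:algorithm-recursion} to scales in $\mathcal J$, count the $|T|(n_T+1)|\mathcal J|$ triples, and obtain the comparisons from Theorems~\ref{thm:algorithm-upper}, \ref{thm:algorithm-tree-certificate} and~\ref{m-2}. Your closed formula for $f_{n,j}(u)$ as a maximum of $A_{n',j'}(u)$ over dominating active pairs is a more explicit form of the paper's remark that inactive indices may be skipped; you should apply it only for $j\ge j_0$, where the recursion is defined. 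Your comment on exact arithmetic with the powers of two $2^nr^{-j}$ addresses a point the paper leaves implicit.
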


\begin{proof}
The set $\mathcal Q$, and hence $\mathcal J$, can be determined by considering
all $x,y\in T$ and $1\le n\le n_T$. Since
\[
 2^{2^{n_T}-1}\le |T|,
\]
we have
\[
 n_T\le\log_2(1+\log_2|T|)
\]
when $n_T\ge1$. Thus \eqref{eq:active-scale-count} gives a polynomial bound
for $|\mathcal J|$.

By Lemma~\ref{lem:active-packing-scales},
\[
 A_{n,j}(u)>0
 \quad\Longrightarrow\quad
 j,j+2\in\mathcal J.
\]
In particular, if $j\notin\mathcal J$, then $A_{n,j}(u)=0$ for every $u$.
For such $j$, \eqref{eq:algorithm-recursion} contains only terms with larger
second index. Repeated use of this identity and the transitivity of
\eqref{eq:algorithm-order} show that indices outside $\mathcal J$ may be
omitted until either an index in $\mathcal J$ or the boundary $J_+$ is
reached. Hence it is enough to evaluate the functions $f_{n,j}$ for
$j\in\mathcal J$, together with the zero values for $j>J_+$.

For $j\in\mathcal J$, if $j+2\notin\mathcal J$, then
$A_{n,j}(u)=0$ by Lemma~\ref{lem:active-packing-scales}. Otherwise the
functions $f_{n+\kappa,j+2}$ have already been determined when the indices
$j$ are considered in decreasing order. Thus every $A_{n,j}(u)$ is obtained
from at most $|T|$ successive choices among points of $T$.

By \eqref{eq:active-scale-count}, the number of triples $(u,n,j)$ that need
to be considered is at most $|T|(n_T+1)|\mathcal J|,$
which is polynomial in $|T|$. For each triple, the construction of
$A_{n,j}(u)$ and the maximum in \eqref{eq:algorithm-recursion} involve only
polynomially many of the relations in the statement of the theorem.
Therefore $V(T)$ can be computed in time polynomial in $|T|$.

The comparison with the three geometric functionals follows from
Theorem~\ref{thm:algorithm-upper},
Theorem~\ref{thm:algorithm-tree-certificate}, and Theorem~\ref{m-2}.
\end{proof}

For the regular log-concave canonical processes of Section~\ref{5}, whenever
the assumptions of Theorem~\ref{thm:finite-algorithm} are satisfied,
Theorem~\ref{canonical processes} and Theorem~\ref{thm:finite-algorithm} give
a constant-factor deterministic approximation to $
 \textsf E\sup_{t\in T}X_t$.

\textbf{Acknowledgments}.
The work of Hanchao Wang was supported by the National Key R\&D Program of China (No.2024YFA1013501), the National Natural Science Foundation of China (No. 12571162), and Shandong Provincial Natural Science Foundation (No. ZR2024MA082).

\textbf{Authors' contributions}

These authors contributed equally to this work.

\textbf{Data availability}

No data were used in the research described in this article.

\printbibliography

\end{document}